\theoremstyle{plain}
\newtheorem{theorem}{Theorem}[section]
\newtheorem{lemma}[theorem]{Lemma}
\newtheorem{corollary}[theorem]{Corollary}
\newtheorem*{theorem*}{Theorem}
\theoremstyle{definition}
\newtheorem{definition}[theorem]{Definition}
\newtheorem{example}[theorem]{Example}
\newtheorem{remark}[theorem]{Remark}
\newtheorem{notation}[theorem]{Notation}
\newcommand{\BIGOP}[1]{\mathop{\mathchoice%
{\raise-0.22em\hbox{\huge $#1$}}%
{\raise-0.05em\hbox{\Large $#1$}}{\hbox{\large $#1$}}{#1}}}
\newcommand{\RR}{\mathbb{R}}
\newcommand{\CC}{\mathbb{C}}
\newcommand{\ZZ}{\mathbb{Z}}
\newcommand{\FF}{\mathbb{F}}
\newcommand{\BB}{\mathrm{B}}
\newcommand{\EE}{\mathrm{E}}
\newcommand{\im}{\mathrm{im}}
\newcommand\RP{\mathbb{R}{\rm P}}
\newcommand\Sym{\mathfrak S}
\newcommand{\dimaff}{\dim_{\affine}}
\newcommand{\cat}{\operatorname{cat}}
\newcommand{\id}{\operatorname{id}}
\newcommand{\res}{\operatorname{res}}
\newcommand{\affine}{\operatorname{aff}}
\newcounter{commentcounter}
\begin{document}

\title{On highly regular embeddings}

\author{Pavle V. M. Blagojevi\'{c}} 
\thanks{The research by Pavle V. M. Blagojevi\'{c} leading to these results has
        received funding from the European Research Council under the European Union's Seventh
        Framework Programme (FP7/2007-2013) / ERC Grant agreement no.~247029-SDModels.  Also
        supported by the grant ON 174008 of the Serbian Ministry of Education and Science.}
\address{Mathemati\v cki Institut SANU, Knez Mihailova 36, 11001 Beograd, Serbia}
\email{pavleb@mi.sanu.ac.rs} 
\address{Institut f\" ur Mathematik, FU Berlin, Arnimallee 2, 14195 Berlin, Germany}
\email{blagojevic@math.fu-berlin.de}\urladdr{http://page.mi.fu-berlin.de/blagojevic/} 
\author{Wolfgang L\"uck} 
\thanks{The research by Wolfgang L\"uck leading to these results has received funding from the Leibniz
        Award granted by the DFG}
\address{Mathematisches Institut der Universit\"at Bonn, Endenicher Allee 60, 53115 Bonn, Germany} \email{wolfgang.lueck@him.uni-bonn.de}
\urladdr{http://www.him.uni-bonn.de/lueck/} 
\author{G\"unter M. Ziegler} 
\thanks{The research by G\"unter M. Ziegler leading to these results has received funding from the 
        European Research Council under the European Union's Seventh Framework Programme (FP7/2007-2013) / ERC   Grant agreement no.~247029-SDModels
        and by the DFG Collaborative Research Center TRR~109 ``Discretization in Geometry and Dynamics.''}  
\address{Institut f\" ur Mathematik, FU Berlin, Arnimallee 2, 14195 Berlin, Germany} \email{ziegler@math.fu-berlin.de}
\urladdr{http://page.mi.fu-berlin.de/gmziegler/} \date{\today}

\begin{abstract} \noindent
A continuous map $\RR^d\to\RR^N$ is \emph{$k$-regular} if it maps  any $k$ pairwise distinct points to $k$ linearly independent vectors.
Our main result on $k$-regular maps is the following lower bound for the existence of such maps between Euclidean spaces, in which 
$\alpha (k)$ denotes the number of ones in the dyadic expansion of~$k$:
\begin{quote}
    \emph{For $d\geq 1$ and $k\geq 1$ there is no $k$-regular map $\RR^d\to\RR^N$ for 
        $N<d(k-\alpha (k))+\alpha (k)$.}
\end{quote}
This reproduces a result of Cohen \& Handel from 1978 for $d=2$ and the extension by Chisholm from 1979 to the case when $d$ is a power of~$2$; 
for the other values of $d$ our bounds are in general better than Karasev's~(2010), who had only recently gone beyond
Chisholm's special case.
In particular, our lower bound turns out to be tight for $k\le3$.

A framework of Cohen \& Handel~(1979) relates the existence of a $k$-regular map to the existence of a low-dimensional inverse of a certain vector bundle. Thus
the non-existence of regular maps into $\RR^N$ for small~$N$ follows from the non-vanising of specific
dual Stiefel--Whitney classes. This we prove using the general Borsuk--Ulam--Bourgin--Yang theorem combined with a key observation by Hung~(1990) about the cohomology 
algebras of configuration spaces.

Our study produces similar lower bound results also for the 
existence of \emph{$\ell$-skew embeddings} $\RR^d\to\RR^N$, for which we require that the
images of the tangent spaces of any $\ell$ distinct points are skew affine subspaces.
This extends work by Ghomi \& Tabachnikov (2008) for the case $\ell=2$.
\end{abstract}


\maketitle

\section{Introduction and statement of the main results}

A $k$-regular embedding $X\to\RR^N$ maps any $k$ pairwise distinct points in a topological
space $X$ to $k$ linearly independent vectors.  The study of the existence of $k$-regular
maps was initiated by Borsuk~\cite{borsuk} in 1957 and later attracted additional
attention due to its connection with approximation theory.  The problem was extensively
studied by Chisholm~\cite{chisholm}, Cohen \& Handel~\cite{cohen-handel},
Handel~\cite{handel}, and Handel \& Segal~\cite{handel-segal} in the 1970's and 1980's.
In the 1990's the study of $k$-regular maps, and in particular the related notion of
$k$-neighbourly submanifolds, was continued by Handel~\cite{handel96} and
Vassiliev~\cite{vassiliev92}.  The most complete result from that time, which gives a
lower bound for the existence of $k$-regular maps between Euclidean spaces, is the
following result of Chisholm \cite[Theorem 2]{chisholm}:
\begin{compactitem}[\quad]
\item[] {\em For $d$ a power of $2$, $k\geq 1$, there is no $k$-regular map
    $\RR^d\to\RR^{d(k-\alpha (k))+\alpha (k)-1}$, where $\alpha (k)$ denotes the number of
    ones in the dyadic expansion of $k$.}
\end{compactitem}
This result was only recently extended by Karasev~\cite[Corollary~9.4
and~9.6]{karasev:regular embeddings} beyond the case when $d$ is a power of $2$.

The framework of Cohen \& Handel~\cite{cohen-handel} relates the existence of a
$k$-regular map to the existence of a specific inverse of the regular representation vector bundle over the unordered configuration space.
Using Stiefel--Whitney classes of this vector bundle, combined with a key observation by Hung~\cite{hung}, we
here get an extension of the Chisholm result with explicit lower bounds for \emph{all}
values of $d$; it will appear below as Theorem~\ref{theorem:Main-1}:
\begin{compactitem}[\quad]
\item[] {\em For any $d\geq 1$ and $k\geq 1$ there is no $k$-regular map
    $\RR^d\to\RR^{d(k-\alpha (k))+\alpha (k)-1}$.}
\end{compactitem}
This reproduces Chisholm's bound for the case of $d$ being a power of $2$, while for other
values of $d$ our bounds are in general better than Karasev's.  In particular, our lower
bound will turn out to be tight for $k=3$.  We mention without giving details that our
methods can also be used to get rid of the assumption that $k$ is a power of $2$ in the
theorem of Vassiliev appearing in~\cite[Theorem~1]{vassiliev}.

\smallskip A smooth embedding $M\to\RR^N$ of a manifold $M$ is an $\ell$-skew embedding if for
any $\ell$ pairwise distinct points on $M$ the corresponding tangent spaces of the image
in~$\RR^N$ are skew.  The notion of $\ell$-skew embeddings is a natural extension
of the notion of totally skew embeddings ($2$-skew embeddings) as introduced and studied in 2008
by Ghomi \& Tabachnikov~\cite{ghomi-tabachnikov}.  The existence of $2$-skew embeddings was
studied in a number of concrete examples by Barali\'c et
al.~\cite{baralic-prvulovic-stojanovic-vrecica-zivaljevic}.  Following the same pattern as
in the case of the $k$-regular maps, we get a new lower bound for the existence of
$\ell$-skew embeddings that will appear later as Theorem~\ref{theorem:Main-2}:
\begin{compactitem}[\quad]
\item[] \emph{For $d\geq 1$, $\ell\geq 1$ and $\gamma(d)=\lfloor\log_2d\rfloor+1$ there is no $\ell$-skew embedding 
\[\RR^d\to\RR^{2^{\gamma(d)}(\ell-\alpha(\ell))+(d+1)\alpha(\ell)-2}.\]}
\end{compactitem}
In particular, for $\ell=2$ our bound contains the bound
of Ghomi \& Tabachnikov~\cite[Theorem~1.4 and Corollary~1.5]{ghomi-tabachnikov}.

\smallskip

The concept of $k$-regular-$\ell$-skew embeddings combines the notions of $k$-regular maps and
$\ell$-skew embeddings.  It was introduced and studied in 2006 by Stojanovi\'c
\cite{stojanovic}.  Based on our results for $k$-regular and $\ell$-skew embeddings, we derive a
new lower bound for the existence of $k$-regular-$\ell$-skew embeddings between Euclidean spaces
that considerably improves other known lower bounds and appears as
Theorem~\ref{theorem:Main-3}:
\begin{compactitem}[\quad]
 \item[] {\em For any $k,\ell,d\geq 2$ there is no $k$-regular-$\ell$-skew embedding
          \[\RR^d\to\RR^{(d-1)(k-\alpha(k))+(2^{\gamma(d)}-d-1)(\ell-\alpha(\ell))+(d+1)\ell+k-2}.\]}
\end{compactitem}

\smallskip

The main technical points of this paper are related to the study of the dual Stiefel--Whitney classes of the regular representation vector bundle over the unordered configuration 
space: see the proof of Lemma~\ref{lemma:dualSW-1}, Corollary~\ref{cor:SW-monomials} and the proof of Theorem~\ref{theorem:Main-2.1}.
Many related calculations were performed in the classical literature. 
In particular, the map $H_*(\BB\Sym_k)\to H_*(\BB\mathrm{O}(k))$ was studied by Kochman~\cite{kochman} in the language of Dyer--Lashof operations.
Frederick Cohen, in his landmark paper from 1976 \cite{cohen}, described the map $H_*(F(\RR^d,k))\to H_*(\BB\Sym_k)$.
\medskip

\noindent
{\bf Acknowledgments.} Thanks to Fred Cohen for thoughtful remarks that initiated this study. 
We are grateful to Peter Landweber for valuable discussions and comments
and to the referee for helpful observations and references.

\section{$k$-regular maps}

In this section we will define and then study $k$-regular maps, review relevant known results and eventually prove the
following extension of the result by Chisholm \cite[Theorem~2]{chisholm}.
\begin{theorem}
  \label{theorem:Main-1}
  Let $k,d\geq 1$ be integers.  There is no $k$-regular map $\RR^d\to\RR^{N}$ for \[N\leq
  d(k-\alpha (k))+\alpha (k)-1,\] where $\alpha (k)$ denotes the number of ones in the
  dyadic expansion of $k$.
\end{theorem}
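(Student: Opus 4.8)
The plan is to follow the Cohen--Handel reduction to an obstruction expressed in Stiefel--Whitney classes, and then to establish the needed nonvanishing by restricting to elementary abelian subgroups together with Hung's observation. Throughout, cohomology is taken with $\FF_2$-coefficients. First I would recall the Cohen--Handel framework: let $F(\RR^d,k)$ be the configuration space of $k$ ordered distinct points with its free $\Sym_k$-action, and let $\lambda_{d,k}=F(\RR^d,k)\times_{\Sym_k}\RR^k$ be the rank-$k$ vector bundle over $F(\RR^d,k)/\Sym_k$ associated to the permutation representation $\RR^k$ of $\Sym_k$. A $k$-regular map $f\colon\RR^d\to\RR^N$ sends each configuration $(x_1,\dots,x_k)$ to the linearly independent frame $(f(x_1),\dots,f(x_k))$; this assignment is $\Sym_k$-equivariant and, after passing to quotients, exhibits $\lambda_{d,k}$ as a subbundle of the trivial bundle $\varepsilon^N$. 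Hence $\lambda_{d,k}$ admits an inverse, i.e.\ there is a bundle $\eta$ of rank $N-k$ with $\lambda_{d,k}\oplus\eta\cong\varepsilon^N$, so that $w(\eta)=\overline{w}(\lambda_{d,k})$ and therefore $\overline{w}_i(\lambda_{d,k})=0$ for all $i>N-k$. Contrapositively, if $\overline{w}_m(\lambda_{d,k})\neq 0$ then every $k$-regular map $\RR^d\to\RR^N$ forces $N\geq k+m$.

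Thus it suffices to prove the single nonvanishing statement
\[
\overline{w}_{(d-1)(k-\alpha(k))}(\lambda_{d,k})\neq 0,
\]
since $k+(d-1)(k-\alpha(k))=d(k-\alpha(k))+\alpha(k)$, exactly one more than the claimed bound on $N$. To establish this I would reduce to the prime-power case using the dyadic expansion $k=2^{a_1}+\dots+2^{a_s}$ with $s=\alpha(k)$. Packing $s$ clusters of $2^{a_1},\dots,2^{a_s}$ points into disjoint balls of $\RR^d$ defines a map $\prod_j F(\RR^d,2^{a_j})\to F(\RR^d,k)$ that is equivariant with respect to the inclusion $\Sym_{2^{a_1}}\times\dots\times\Sym_{2^{a_s}}\hookrightarrow\Sym_k$; on quotients it pulls $\lambda_{d,k}$ back to the external Whitney sum of the $\lambda_{d,2^{a_j}}$, because $\RR^k$ restricts to $\bigoplus_j\RR^{2^{a_j}}$. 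By the Künneth theorem over $\FF_2$ the dual class of the pullback is the cross product $\prod_j\overline{w}(\lambda_{d,2^{a_j}})$, whose component in degree $\sum_j(d-1)(2^{a_j}-1)=(d-1)(k-\alpha(k))$ is the external product of the individual top classes and is therefore nonzero as soon as each factor is. Hence the general case follows once each top class $\overline{w}_{(d-1)(2^{a_j}-1)}(\lambda_{d,2^{a_j}})$ is shown nonzero.

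For the remaining power-of-two case $k=2^t$, I would restrict further along the regular embedding $(\ZZ/2)^t\hookrightarrow\Sym_{2^t}$. The permutation representation $\RR^{2^t}$ restricts to the regular representation of $(\ZZ/2)^t$, which splits as the sum of all $2^t$ characters; writing $\ell_v=\sum_i v_ix_i$ for the corresponding linear forms in $H^*(B(\ZZ/2)^t)=\FF_2[x_1,\dots,x_t]$, the restricted bundle has total class $w=\prod_{v}(1+\ell_v)$, and so $\overline{w}=\prod_{0\neq v}(1+\ell_v)^{-1}$. The contribution in top degree $(d-1)(2^t-1)$ is the monomial $\prod_{0\neq v}\ell_v^{\,d-1}$. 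The main obstacle, and precisely the point at which allowing arbitrary $d$ (rather than only powers of $2$, as in Chisholm) becomes delicate, is to verify that this class survives in the image of $H^*\bigl(F(\RR^d,2^t)/\Sym_{2^t}\bigr)$, i.e.\ that it is not annihilated by the truncation imposed by the finite-dimensional configuration space; this is exactly what Hung's observation supplies. Granting it, $\prod_{0\neq v}\ell_v^{\,d-1}\neq 0$ forces $\overline{w}_{(d-1)(2^t-1)}(\lambda_{d,2^t})\neq 0$, which together with the dyadic reduction and the Cohen--Handel obstruction completes the proof.
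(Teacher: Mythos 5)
Your overall strategy coincides with the paper's: the Cohen--Handel reduction to the existence of an $(N-k)$-dimensional inverse of the bundle $\xi_{\RR^d,k}=F(\RR^d,k)\times_{\Sym_k}\RR^k$, the resulting criterion that $\overline{w}_{(d-1)(k-\alpha(k))}(\xi_{\RR^d,k})\neq 0$ forbids $k$-regular maps into $\RR^{d(k-\alpha(k))+\alpha(k)-1}$, the reduction to $k=2^t$ by packing the dyadic clusters into disjoint balls and applying the K\"unneth formula, and the use of the regular embedding $(\ZZ/2)^t\hookrightarrow\Sym_{2^t}$ with Dickson invariants in the prime-power case. All of that matches the paper.

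However, there is a genuine gap exactly where you write ``Granting it''. The assertion that the degree-$(d-1)(2^t-1)$ component of $\overline{w}$ reduces to the single monomial $\bigl(\prod_{v\neq 0}\ell_v\bigr)^{d-1}=q_{t,0}^{d-1}$ is the entire content of the hard step, and it is not supplied by Hung's decomposition alone. After expanding $\overline{w}=\sum_{n\geq 0}(w_1+\cdots+w_{k-1})^n$ one must kill, in $H^*(F(\RR^d,k)/\Sym_k)$, every monomial $w_1^{j_1}\cdots w_{k-1}^{j_{k-1}}$ of total degree $(d-1)(k-1)$ with $0\leq j_{k-1}\leq d-2$. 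Hung's splitting of $H^*(F(\RR^d,k)/\Sym_k)$ into $\alpha_{d,k}^*\bigl(\ker(\res^{\Sym_k}_{E_t})\oplus\FF_2[q_{t,t-1},\ldots,q_{t,1}]\bigr)$ and $\alpha_{d,k}^*(\langle q_{t,0}\rangle)$, together with the vanishing of cohomology above degree $(d-1)(k-1)$, disposes only of the monomials with $j_{k-1}=0$. For $1\leq j_{k-1}\leq d-2$ the paper needs a separate induction on the dimension: a Borsuk--Ulam--Bourgin--Yang argument applied to the coordinate projection $F(\RR^{n+1},k)\to W_k$ yields $w_{k-1}(\eta_k)\cdot\mathrm{Index}_{\Sym_k}(F(\RR^n,k))\subseteq\mathrm{Index}_{\Sym_k}(F(\RR^{n+1},k))$, and this is what handles the intermediate powers of $w_{k-1}$. (This is precisely the point at which the restriction to $d$ a power of $2$ in Chisholm's argument is removed; for such $d$ the multinomial coefficients already vanish and no such argument is needed.) You would also need the nonvanishing $w_{k-1}^{d-1}\neq 0$ in $H^{(d-1)(k-1)}(F(\RR^d,k)/\Sym_k)$, which is a nontrivial input from the authors' earlier work and not a formal consequence of restricting to $B(\ZZ/2)^t$. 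Without these two ingredients your top-degree identification does not follow.
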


This result is easily seen to be true and tight for the cases $d=1$, $k=1$, and $k=2$.
As we will see it also gives the complete answer in the case of $3$-regular maps.
This same fact was observed by Handel in~\cite[Proposition 2.3]{handel79}.

\begin{corollary}
\label{cor:1}
A $3$-regular map $\RR^d\to\RR^N$ exists if and only if $N\geq d+2$.
\end{corollary}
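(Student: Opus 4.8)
The statement splits into a nonexistence half and an existence half, which I would treat separately. The \emph{only if} direction is immediate from Theorem~\ref{theorem:Main-1} applied to $k=3$. Since $3$ has dyadic expansion $11_2$, we have $\alpha(3)=2$, so the bound there reads
\[
N \leq d(3-2)+2-1 = d+1.
\]
Hence there is no $3$-regular map $\RR^d\to\RR^N$ for $N\leq d+1$, and the existence of such a map forces $N\geq d+2$.

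For the \emph{if} direction it suffices to produce a single $3$-regular map $\RR^d\to\RR^{d+2}$: given one, composing with the standard linear inclusion $\RR^{d+2}\hookrightarrow\RR^N$ (equivalently, padding each image vector with zeros) yields a $3$-regular map $\RR^d\to\RR^N$ for every $N\geq d+2$, since a linear injection carries linearly independent vectors to linearly independent vectors. The candidate I would use is the lift to the paraboloid,
\[
f(x) = (1,\, x,\, \|x\|^2) \in \RR\times\RR^d\times\RR = \RR^{d+2}.
\]
For $d=1$ this is precisely the moment curve $x\mapsto(1,x,x^2)$, whose $3$-regularity is the nonvanishing of the Vandermonde determinant on distinct points, so $f$ is the natural generalization.

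The crux is to verify that $f$ is $3$-regular. Writing $f(x)=(1,\varphi(x))$ with $\varphi(x)=(x,\|x\|^2)\in\RR^{d+1}$, the three vectors $f(a),f(b),f(c)$ are linearly independent if and only if the points $\varphi(a),\varphi(b),\varphi(c)$ are affinely independent, i.e.\ not collinear. I would argue that three \emph{distinct} points of the paraboloid $\{(y,\|y\|^2):y\in\RR^d\}$ can never be collinear: projecting a putative common line to the first $d$ coordinates shows that $a,b,c$ lie on a single affine line $\ell\subset\RR^d$, and the restriction of $y\mapsto\|y\|^2$ to $\ell$ is a strictly convex function of one real variable, whose graph therefore meets any line in at most two points. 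This contradiction yields affine independence, and hence $3$-regularity of $f$. The only genuine work here is the strict convexity argument; the passage between linear independence, affine independence, and collinearity, as well as the reduction from general $N$ to $N=d+2$, is routine bookkeeping.
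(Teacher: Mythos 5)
Your proposal is correct and follows essentially the same route as the paper: the nonexistence half is the specialization of Theorem~\ref{theorem:Main-1} to $k=3$, $\alpha(3)=2$, and the existence half is an explicit affinely $2$-regular embedding of $\RR^d$ into a strictly convex hypersurface of $\RR^{d+1}$, converted to a $3$-regular map by prepending the coordinate $1$ (Lemma~\ref{lemma:equivalence}). The only difference is that you use the paraboloid $(x,\|x\|^2)$ where the paper (Example~\ref{exa:special_maps}) routes through the sphere $S^d$; both rest on the same fact that no affine line meets the hypersurface in three points, so the arguments are interchangeable.
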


In the case $d=2$ of $k$-regular maps from the plane we get the following complete answer in
the case when $k$ is power of $2$.

\begin{corollary}
\label{cor:2}
Let $m\geq 1$ be an integer. 
 Then a $2^m$-regular map $\RR^2\to\RR^N$ exists if and only if $N\geq 2^{m+1}-1$.
\end{corollary}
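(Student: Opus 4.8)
The statement has two directions, and I would treat them separately. The \emph{only if} direction should be immediate from what is already proved: applying Theorem~\ref{theorem:Main-1} with $d=2$ and $k=2^m$, and using that a power of two has $\alpha(2^m)=1$, the nonexistence bound becomes
\[
d(k-\alpha(k))+\alpha(k)-1 = 2(2^m-1)+1-1 = 2^{m+1}-2 .
\]
Thus Theorem~\ref{theorem:Main-1} already rules out any $2^m$-regular map $\RR^2\to\RR^N$ for $N\leq 2^{m+1}-2$, so the existence of such a map forces $N\geq 2^{m+1}-1$. No further work is needed here.

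For the \emph{if} direction my plan is to reduce the whole family of cases to a single construction. It suffices to exhibit one $2^m$-regular map $\RR^2\to\RR^{2^{m+1}-1}$, because for any $N\geq 2^{m+1}-1$ one can then post-compose with the standard linear inclusion $\RR^{2^{m+1}-1}\hookrightarrow\RR^N$; a linear injection carries linearly independent vectors to linearly independent vectors, so $2^m$-regularity is preserved. Hence the corollary collapses to producing one explicit map into the exact dimension predicted by the lower bound.

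To build that map I would use a \emph{complex moment curve}. Writing $k=2^m$ and identifying $\RR^2$ with $\CC$, I would define $f\colon\CC\to\RR^{2k-1}$ by recording the real and imaginary parts of the powers $1,z,z^2,\dots,z^{k-1}$ and discarding the identically zero imaginary part of the constant term:
\[
f(z)=\bigl(1,\ \operatorname{Re}(z),\ \operatorname{Im}(z),\ \operatorname{Re}(z^2),\ \operatorname{Im}(z^2),\ \dots,\ \operatorname{Re}(z^{k-1}),\ \operatorname{Im}(z^{k-1})\bigr).
\]
This lands in $\RR^{1+2(k-1)}=\RR^{2k-1}=\RR^{2^{m+1}-1}$, exactly the target dimension. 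To verify regularity I would take distinct $z_1,\dots,z_k\in\CC$ and suppose $\sum_{j=1}^k a_j f(z_j)=0$ with $a_j\in\RR$. Reading off coordinates gives $\sum_j a_j=0$ together with $\sum_j a_j\operatorname{Re}(z_j^i)=0$ and $\sum_j a_j\operatorname{Im}(z_j^i)=0$ for $1\leq i\leq k-1$; since the $a_j$ are real, the latter two combine into the single complex relation $\sum_j a_j z_j^i=0$. Hence $\sum_{j=1}^k a_j z_j^i=0$ for all $i=0,1,\dots,k-1$, i.e.\ $(a_1,\dots,a_k)$ lies in the kernel of the $k\times k$ Vandermonde matrix $\bigl(z_j^i\bigr)_{0\leq i\leq k-1,\,1\leq j\leq k}$. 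As the $z_j$ are pairwise distinct this matrix is invertible, so $a_1=\cdots=a_k=0$, proving that $f(z_1),\dots,f(z_k)$ are linearly independent and $f$ is $2^m$-regular.

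I do not expect a serious obstacle in this argument: the nonexistence half is handed to us in full by Theorem~\ref{theorem:Main-1}, and the existence half rests only on the elementary nonvanishing of the Vandermonde determinant. It is worth noting that this construction yields a $k$-regular map $\RR^2\to\RR^{2k-1}$ for \emph{every} $k$; what is special about $k=2^m$ is precisely that only then does $2k-1$ meet the lower bound $2k-\alpha(k)$, so that construction and obstruction coincide and the answer becomes exact. The one point demanding a little care is the bookkeeping that drops the constant term's imaginary part, so that the map hits $\RR^{2k-1}$ exactly rather than $\RR^{2k}$; since the discarded coordinate is identically zero on the image, omitting it does not affect any linear relation among the vectors $f(z_j)$.
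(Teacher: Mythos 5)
Your proof is correct and matches the paper's argument: the paper also obtains the lower bound by specializing Theorem~\ref{theorem:Main-1} to $d=2$, $k=2^m$, and obtains existence from the complex moment curve $f_{\CC}(z)=(1,z,\dots,z^{k-1})\in\RR\times\CC^{k-1}\cong\RR^{2k-1}$ of Example~\ref{exa:special_maps}~\eqref{exa:special_maps:(1)}, whose regularity rests on the Vandermonde determinant exactly as you describe. You merely spell out the details (the real-versus-complex bookkeeping and the reduction to the minimal $N$) that the paper leaves implicit.
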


\subsection{Definition and first bounds}

All topological spaces we consider are Hausdorff spaces and all maps are continuous.

The \emph{configuration space} of $n$ ordered pairwise distinct points in the topological
space $X$ is the subspace of $X^k$ defined by
\[
F(X,k)=\{(x_1,\ldots,x_k)\in X^k : x_i\neq x_j\text{ for all }i\neq j\}.
\]
The symmetric group $\Sym_k$ acts freely on the configuration space by permuting the
points.

\begin{definition}[Regular maps]
  Let $X$ be a topological space, $k\geq 1$ be an integer, and $f\colon X\to\RR^N$ be a
  continuous map.  Then we say that the map $f$ is
  \begin{enumerate}[(1)]
  \item {\bf $k$-regular map} if for every $(x_1,\ldots,x_k)\in F(X,k)$ the vectors
    $f(x_1),\ldots,f(x_k)$ are linearly independent, and
  \item {\bf affinely $k$-regular map} if for every $(x_0,\ldots,x_k)\in F(X,k+1)$ the
    points $f(x_0),\ldots,f(x_k)$ are affinely independent.
  \end{enumerate}
\end{definition}

\noindent
Obviously, each $k$-regular map is also an affinely $(k-1)$-regular map.
Moreover the following lemma is a direct consequence of the definition.
\begin{lemma}
  \label{lemma:equivalence}
  A map $f\colon X\to\RR^N$ is affinely $(k-1)$-regular if and only if the map $g\colon
  X\to\RR\times\RR^N$, defined by $g(x)=(1,f(x))$, is $k$-regular.
\end{lemma}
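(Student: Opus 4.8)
The plan is to reduce the asserted equivalence to one standard fact of linear algebra, applied pointwise over the configuration space. Recall that points $p_0,\ldots,p_{m}\in\RR^N$ are affinely independent precisely when the homogenized vectors $(1,p_0),\ldots,(1,p_m)\in\RR\times\RR^N$ are linearly independent. Indeed, reading off the first coordinate and the remaining $N$ coordinates separately, a linear dependence $\sum_{i=0}^m\lambda_i(1,p_i)=0$ is equivalent to the pair of conditions $\sum_{i=0}^m\lambda_i=0$ and $\sum_{i=0}^m\lambda_i p_i=0$, which is exactly an affine dependence among $p_0,\ldots,p_m$. Hence the two notions of (in)dependence correspond to each other under the map $p\mapsto(1,p)$, and I would record this as the only computation needed.

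With this correspondence in hand, the argument is a direct unwinding of the two definitions. By definition, $f$ is affinely $(k-1)$-regular if and only if for every $(x_0,\ldots,x_{k-1})\in F(X,k)$ the images $f(x_0),\ldots,f(x_{k-1})$ are affinely independent. Applying the linear-algebra fact above with $m=k-1$ and $p_i=f(x_i)$, this holds if and only if the vectors $(1,f(x_0)),\ldots,(1,f(x_{k-1}))$, that is $g(x_0),\ldots,g(x_{k-1})$, are linearly independent for every such tuple. After the harmless relabeling of the ordered indices $x_0,\ldots,x_{k-1}$ as $x_1,\ldots,x_k$, this last condition is exactly the statement that $g$ is $k$-regular. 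Since every step is an equivalence quantified over the same configuration space $F(X,k)$, chaining them yields the claim.

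There is essentially no obstacle here: the substance is the homogenization correspondence, and the only point requiring care is bookkeeping of the quantifiers. Specifically, I would make sure that both conditions genuinely range over $k$-tuples of pairwise distinct points of $X$ -- affine $(k-1)$-regularity involves $F(X,(k-1)+1)=F(X,k)$, while $k$-regularity of $g$ involves $F(X,k)$ -- so that the pointwise equivalence upgrades to an equivalence of the global regularity properties without any shift in the number of points, and so that the permutation-invariance of both linear and affine independence makes the index relabeling legitimate.
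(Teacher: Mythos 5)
Your proof is correct and is exactly the argument the paper has in mind: the paper states this lemma without proof as ``a direct consequence of the definition,'' and the homogenization correspondence between affine independence of $p_0,\ldots,p_m$ and linear independence of $(1,p_0),\ldots,(1,p_m)$, together with the observation that both conditions quantify over $F(X,k)$, is precisely that direct consequence. Nothing further is needed.
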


\begin{example}
  \label{exa:special_maps}
    \ 
    \begin{enumerate}[(1)]
     
    \item \label{exa:special_maps:(1)} 
      The map $f_{\RR}\colon\RR\to\RR^k$ given by $f_{\RR}(x)=(1,x,x^2,\ldots,x^{k-1})$  and 
      the map $f_{\CC}\colon\CC\to\RR\times \CC^{k-1}$ given by 
      $f_{\CC}(z)=(1,z,z^2,\ldots,z^{k-1})$ are $k$-regular maps due to the nonvanishing
      of the Vandermonde determinant at every point of $F(\RR,k)$ and $F(\CC,k)$.
      
    \item \label{exa:special_maps:(2)} The standard embedding $i\colon S^n\to\RR^{n+1}$ is
      affinely $2$-regular.  Indeed, no affine line in $\RR^{n+1}$ intersects the sphere
      $S^n:=i(S^n)=\{x\in\RR^{n+1}:\|x\|=1\}$ in more than two points.  Thus, for every
      $(x_1,x_2,x_3)\in F(S^n,3)$ the set of points $\{i(x_1),i(x_2),i(x_3)\}$ cannot be
      on a single line, i.e., it is affinely independent.
      
    \item \label{exa:special_maps:(3)} If $X$ embeds into $S^n$, then there exists a $3$-regular map 
     $X\to\RR^{n+2}$,~\cite[Theorem 2.3]{handel-segal}.  Indeed, by the previous example there exists an
      affinely $2$-regular map $i\colon S^n\to\RR^{n+1}$.  Then by 
      Lemma~\ref{lemma:equivalence} the map $j \colon S^n\to\RR\times\RR^{n+1}$ given by
      $j(x)=(1,i(x))$ is a $3$-regular map.  In particular, there exists a $3$-regular map
      $\RR^n\to\RR^{n+2}$.
    \end{enumerate}
  
\end{example}

\begin{notation}\label{not:tuples} In the sequel we often abbreviate a tuple $(x_1,x_2, \ldots.,x_k)$
by $\underline{x}$, and analogously for $\underline{y}$ and $\underline{\lambda}$.
\end{notation}

The first necessary condition for the existence of $k$-regular maps between Euclidean
spaces was given by Boltjanski{\u\i}, Ry{\v{s}}kov \& {\v{S}}a{\v{s}}kin 
in~\cite{boltjanskii-ryskov-saskin}.

\begin{theorem}[Boltjanski{\u\i}, Ry{\v{s}}kov, {\v{S}}a{\v{s}}kin, 1963]
\label{the:BRS}
If there exists a $2k$-regular map $f\colon\RR^d\to\RR^N$, then $(d+1)k\leq N$.
\end{theorem}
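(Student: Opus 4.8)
The plan is to deduce the inequality from a dimension count, by producing from a hypothetical $2k$-regular map $f\colon\RR^d\to\RR^N$ an injective continuous map defined on an open subset of $\RR^{(d+1)k}$ with values in $\RR^N$, and then invoking invariance of domain. First I would fix $k$ pairwise disjoint open balls $B_1,\dots,B_k\subseteq\RR^d$ centered at distinct points, together with a bounded open interval $I\subseteq\RR$ with $0\notin\overline{I}$ (for instance $I=(\tfrac12,\tfrac32)$). On the open set $U=\prod_{i=1}^k (B_i\times I)\subseteq\RR^{(d+1)k}$ I would then define the map
\[
\Phi\colon U\to\RR^N,\qquad \Phi\bigl((z_1,t_1),\dots,(z_k,t_k)\bigr)=\sum_{i=1}^k t_i\,f(z_i),
\]
which is continuous because $f$ is. The point of weighting each $f(z_i)$ by a scalar $t_i$ bounded away from $0$ is to gain one extra dimension per cluster beyond the $d$ dimensions contributed by $z_i$, so that the domain has the desired dimension $(d+1)k$.

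The heart of the argument is to show that $\Phi$ is injective, and this is where $2k$-regularity enters. Suppose $\Phi((z_i,t_i)_i)=\Phi((w_i,s_i)_i)$, that is, $\sum_i t_i f(z_i)-\sum_i s_i f(w_i)=0$. I would analyze this relation ball by ball. In a ball where $z_i=w_i$ the contribution is $(t_i-s_i)f(z_i)$; in a ball where $z_i\neq w_i$ it contributes the two distinct points $z_i,w_i$ with the coefficients $t_i$ and $-s_i$, both nonzero precisely because $I$ avoids $0$. Since the balls are disjoint, all points occurring with a possibly nonzero coefficient are pairwise distinct and number at most $2k$. If some ball had $z_i\neq w_i$, the displayed relation would be a nontrivial linear dependence among at most $2k$ vectors $f(\,\cdot\,)$ at pairwise distinct points, contradicting the linear independence guaranteed by $2k$-regularity; hence $z_i=w_i$ for every $i$. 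The relation then reduces to $\sum_i(t_i-s_i)f(z_i)=0$ among the $k$ distinct points $z_1,\dots,z_k$, and $k$-regularity (a consequence of $2k$-regularity) forces $t_i=s_i$ for all $i$. Thus $(z_i,t_i)=(w_i,s_i)$, so $\Phi$ is injective.

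Finally I would conclude by invariance of domain: an injective continuous map from a nonempty open subset of $\RR^{(d+1)k}$ into $\RR^N$ can exist only if $(d+1)k\le N$, since otherwise composing with the inclusion $\RR^N\hookrightarrow\RR^{(d+1)k}$ would yield a continuous injection whose image is simultaneously open and contained in a proper coordinate subspace, which is impossible. I expect the only delicate point to be the bookkeeping in the injectivity step—keeping the number of active points with nonzero coefficient at most $2k$ and ensuring these coefficients stay nonzero—whereas the passage from injectivity to the dimension bound is routine.
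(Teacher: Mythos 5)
Your proof is correct and is essentially the same as the paper's: the paper also forms $k$ disjoint balls, defines $(\underline{x},\underline{\lambda})\mapsto\sum_i\lambda_i f(x_i)$ on $D_1\times\cdots\times D_k\times(\RR\setminus\{0\})^k$ (your bounded interval $I$ avoiding $0$ is an immaterial variant), proves injectivity by exactly the same bookkeeping of at most $2k$ distinct points with nonzero coefficients, and concludes by the same dimension comparison. No gaps.
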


\begin{proof}
Let $f\colon\RR^d\to\RR^N$ be a $2k$-regular map.
Consider $k$ pairwise disjoint non-empty open balls $D_1,\ldots,D_k$ in $\RR^d$ and 
let $g \colon D_1\times\cdots\times D_k\times (\RR{\setminus}\{0\})^k\to\RR^N$,
$(\underline{x}, \underline{\lambda}) \mapsto \sum_{i=1}^k\lambda_if(x_i)$.
This map $g$ is injective: Indeed, let us assume that
$g(\underline{x},\underline{\lambda}) =  g(\underline{y},\underline{\mu})$, or, equivalently,
that $\sum_{i=1}^k\lambda_if(x_i)-\sum_{i=1}^k\mu_if(y_i) = 0$.
This linear combination can be rewritten in the form
$
\sum_{z\in Z}\gamma_zf(z)=0,
$,
where $Z$ is the union of $\{x_1, x_2, \ldots , x_k\}$ and  $\{y_1, y_2, \ldots , y_k\}$,
$\gamma_z = \lambda_i$, if $z = x_i$ and $x_i \not = y_i$,
$\gamma_z = \mu_i$, if $z = y_i$ and $x_i \not = y_i$,
and $\gamma_z = \lambda_i - \mu_i$ if $z = x_i = y_i$. 
Since $\mathrm{card} (Z)\leq 2k$, the $2k$-regularity of $f$ implies that $\gamma_z=0$ for
all $z\in Z$.  So $(\underline{x},\underline{\lambda}) = (\underline{y},\underline{\mu})$
and $g$ is injective.
This implies that
$dk+k=\dim (D_1\times\cdots\times D_k\times (\RR{\setminus}\{0\})^k)\leq \dim\RR^N=N$.
\end{proof}

\subsection{A topological criterion}

In order to obtain better bounds we apply more elaborate tools.
First we introduce the Stiefel manifold of $k$-frames in a Euclidean space.

\begin{definition}[Stiefel manifold]
Let $1\leq k\leq N$ be integers.
The Stiefel manifold $V_k(\RR^N)$ of all ordered $k$-frames is a subset of the product $(\RR^N)^k$ given by
\[
V_k(\RR^N)=\{(y_1,\ldots,y_k)\in (\RR^N)^k : y_1,\ldots,y_k\text{ are linearly independent}\},
\]
and equipped with the subspace topology.
\end{definition}
\noindent
The symmetric group $\Sym_k$ acts freely on the Stiefel manifold by permuting the vectors in the frame, that is,
the columns of the matrix $(y_1,\ldots,y_k)\in (\RR^{N\times k})$.

\smallskip

With this we can formulate the following elementary but essential lemma.
It is a direct consequence of the definition of a $k$-regular map.
\begin{lemma}
\label{lem:Existence_of_a_map}
 If there exists a $k$-regular map $X\to\RR^N$, then there exists a $\Sym_k$-equivariant map \[F(X,k)\to V_k(\RR^N).\]
\end{lemma}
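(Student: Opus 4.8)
The plan is to use the given $k$-regular map $f$ itself to manufacture the equivariant map, by taking the $k$-fold product of $f$ and restricting it to the configuration space. Concretely, I would define
\[
\widehat f \colon F(X,k) \to (\RR^N)^k, \qquad \widehat f(x_1,\ldots,x_k) = (f(x_1),\ldots,f(x_k)),
\]
which is nothing but the restriction to $F(X,k)$ of the $k$-fold product map $f^{\times k}\colon X^k\to(\RR^N)^k$. The only two things that then need checking are that $\widehat f$ actually takes values in the Stiefel manifold $V_k(\RR^N)$, and that it is $\Sym_k$-equivariant; continuity will come for free.

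For the image condition I would invoke the definition of $k$-regularity verbatim: if $\underline{x}=(x_1,\ldots,x_k)\in F(X,k)$, then the points $x_1,\ldots,x_k$ are pairwise distinct, so by $k$-regularity the vectors $f(x_1),\ldots,f(x_k)$ are linearly independent, which is exactly the assertion that $\widehat f(\underline{x})\in V_k(\RR^N)$. Hence $\widehat f$ corestricts to a map $F(X,k)\to V_k(\RR^N)$. Since $F(X,k)$ carries the subspace topology from $X^k$ and $V_k(\RR^N)$ the subspace topology from $(\RR^N)^k$, continuity of $\widehat f$ is inherited from the continuity of $f^{\times k}$.

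For equivariance, recall that $\Sym_k$ acts on $F(X,k)$ by permuting the entries of a tuple and on $V_k(\RR^N)$ by permuting the vectors of a frame, using the \emph{same} permutation rule in both cases. A one-line computation then gives $\widehat f(\sigma\cdot\underline{x})=\sigma\cdot\widehat f(\underline{x})$ for every $\sigma\in\Sym_k$, simply because applying $f$ componentwise commutes with permuting the components. I do not expect any genuine obstacle: the whole argument is a formal verification, and the only substantive input—the passage from pairwise distinct arguments to linearly independent images—is precisely the hypothesis of $k$-regularity.
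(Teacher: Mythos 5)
Your proof is correct and is exactly the argument the paper has in mind: the paper states this lemma without proof, calling it a direct consequence of the definition, and the intended verification is precisely your restriction of $f^{\times k}$ to $F(X,k)$ together with the two routine checks of image and equivariance.
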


Now we are in the realm of equivariant topology and study the existence of an
$\Sym_k$-equivariant map $F(X,k)\to V_k(\RR^N)$.  
For that purpose we use the following problem about the existence of an inverse bundle, which was formulated and 
proved to be equivalent by Cohen \& Handel in 1978.

Consider the Euclidean space $\RR^k$ as an $\Sym_k$-representation with the action given by
coordinate permutation.  Then the subspace $W_k=\{(a_1,\ldots, a_k)\in\RR^k :\sum a_i=0\}$ is an
$\Sym_k$-subrepresentation of $\RR^k$.  Let us introduce the following vector bundles over
the unordered configuration space
\begin{equation*}
 \begin{array}{llllll}
 \xi_{X,k} \colon & \RR^k & \longrightarrow & F(X,k)\times_{\Sym_k}\RR^k &\longrightarrow  & F(X,k)/\Sym_k
  \\
 \zeta_{X,k} \colon & W_k   & \longrightarrow  & F(X,k)\times_{\Sym_k}W_k   &\longrightarrow  & F(X,k)/\Sym_k
  \\
 \tau_{X,k} \colon & \RR   & \longrightarrow  & F(X,k)/\Sym_k\times\RR     &\longrightarrow  & F(X,k)/\Sym_k
 \end{array}
\end{equation*}
where the last vector bundle is a trivial line bundle.
There is an obvious decomposition:
\[
\xi_{X,k}\cong\zeta_{X,k}\oplus \tau_{X,k}.
\]

\begin{lemma}[Cohen \& Handel, \cite{cohen-handel}]
  \label{lemma:equi}
  An $\Sym_k$-equivariant map $F(X,k)\to V_k(\RR^N)$ exists if and only if the
  $k$-dimensional vector bundle $\xi_{X,k}$ admits an $(N-k)$-dimensional inverse.
\end{lemma}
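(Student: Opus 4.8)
The plan is to translate the whole statement into the language of bundle maps over the base $B:=F(X,k)/\Sym_k$, and to carry this out in two independent steps: first matching $\Sym_k$-equivariant maps $F(X,k)\to V_k(\RR^N)$ with fibrewise injective bundle maps from $\xi_{X,k}$ into a trivial rank-$N$ bundle, and then matching such bundle maps with $(N-k)$-dimensional inverses of $\xi_{X,k}$. The one observation that drives everything is that a $k$-frame $(y_1,\dots,y_k)\in V_k(\RR^N)$ is the same datum as the injective linear map $\RR^k\to\RR^N$ sending the $i$-th standard basis vector to $y_i$; thus $V_k(\RR^N)$ is the space $\mathrm{Mono}(\RR^k,\RR^N)$ of linear monomorphisms, and the $\Sym_k$-action permuting the frame vectors corresponds to permuting the coordinates of the domain $\RR^k$, i.e. precomposition with the permutation representation — which is exactly the representation used to define $\xi_{X,k}$.

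For the first step, write $q\colon F(X,k)\to B$ for the (free) quotient map. Given an $\Sym_k$-equivariant map $\phi\colon F(X,k)\to\mathrm{Mono}(\RR^k,\RR^N)$, define $\Phi([\underline{x},w]):=([\underline{x}],\phi(\underline{x})w)$ on the total space $\xi_{X,k}=F(X,k)\times_{\Sym_k}\RR^k$, landing in the trivial bundle $B\times\RR^N$. A direct check shows that $\Phi$ is well defined precisely when $\phi(\sigma\cdot\underline{x})\circ\sigma=\phi(\underline{x})$ for all $\sigma\in\Sym_k$ (here $\sigma$ also denotes the automorphism $w\mapsto\sigma\cdot w$ of $\RR^k$), and this is exactly the $\Sym_k$-equivariance of $\phi$; since $\phi$ takes values in monomorphisms, $\Phi$ is fibrewise injective. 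Conversely, because the $\Sym_k$-action on $F(X,k)$ is free, pulling $\xi_{X,k}$ back along $q$ yields a canonical $\Sym_k$-equivariant trivialisation $q^\ast\xi_{X,k}\cong F(X,k)\times\RR^k$. A fibrewise injective bundle map $\Phi\colon\xi_{X,k}\to B\times\RR^N$ therefore pulls back to a fibrewise injective map $F(X,k)\times\RR^k\to F(X,k)\times\RR^N$ of trivial bundles, i.e. to a family of monomorphisms $\phi(\underline{x})\colon\RR^k\to\RR^N$, that is, a map $\phi\colon F(X,k)\to V_k(\RR^N)$; its equivariance is forced by the fact that $\Phi$ was already defined over $B$ while the target is pulled back from $B$.

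The second step is the classical subbundle/complement argument. After choosing a fibre metric on $B\times\RR^N$, a fibrewise injective bundle map $\Phi\colon\xi_{X,k}\to B\times\RR^N$ identifies $\xi_{X,k}$ with a rank-$k$ subbundle, whose orthogonal complement $\eta$ is a rank-$(N-k)$ bundle satisfying $\xi_{X,k}\oplus\eta\cong B\times\RR^N$; hence $\eta$ is an $(N-k)$-dimensional inverse. Conversely, an $(N-k)$-dimensional inverse $\eta$ supplies an isomorphism $\xi_{X,k}\oplus\eta\cong B\times\RR^N$, and composing with the inclusion of the first summand produces a fibrewise injective bundle map $\xi_{X,k}\to B\times\RR^N$. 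Chaining the two steps gives the equivalence asserted in the lemma. The only point requiring care — and the place where I would concentrate the verification — is the bookkeeping of the $\Sym_k$-action in the first step: one must confirm that the permutation action threaded through the trivialisation $q^\ast\xi_{X,k}\cong F(X,k)\times\RR^k$ agrees with the precomposition action on $\mathrm{Mono}(\RR^k,\RR^N)$, so that equivariance of $\phi$ corresponds exactly to well-definedness (descent) of $\Phi$; the remaining bundle-theoretic steps are standard.
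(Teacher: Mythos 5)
Your proposal is correct and follows essentially the same route as the paper: both directions rest on the identification of a $k$-frame with a monomorphism $\RR^k\to\RR^N$ and on descending this datum through the Borel construction to a fibrewise injective bundle map $\xi_{X,k}\to F(X,k)/\Sym_k\times\RR^N$, which is then converted into an $(N-k)$-dimensional inverse. The only (cosmetic) divergence is in the ``only if'' direction, where the paper pulls back the cokernel of the tautological inclusion $\nu\to\theta$ over $V_k(\RR^N)/\Sym_k$ along the induced map of quotients, whereas you take the orthogonal complement of the image directly over the base using the standard metric on the trivial bundle; the two constructions yield the same complement.
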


\begin{proof}
($\Longleftarrow$):
Let $\eta$ be an $(N-k)$-dimensional inverse of $\xi_{X,k}$.
Then the composition $f$ of the inclusion (of total spaces) of vector bundles followed by the projection to the fiber (of a trivial vector bundle):
\[
f\colon\xi_{X,k}\rightarrow \xi_{X,k}\oplus\eta\cong\tau^N_{X,k} = F(X,k)/\Sym_k\times\RR^N\rightarrow\RR^N
\]
when restricted on each fiber is a linear monomorphism.  \
Using the map $f$ we define the required $\Sym_k$-equivariant map
\[
g\colon F(X,k) \to V_k(\RR^N), \quad \underline{x} \mapsto  \bigl(f(\underline{x}, e_1),\ldots, f(\underline{x},e_k)\bigr),
\]
where $\{e_1,\ldots,e_k\}$ denotes the standard basis of $\RR^k$.

\noindent
($\Longrightarrow$):
Let $g\colon F(X,k)\to V_k(\RR^N)$ be an $\Sym_k$-equivariant map.
Consider the following map
\[
h'\colon V_k(\RR^N)\times_{\Sym_k}\RR^k\to\RR^N, \quad 
(\underline{y},\underline{\lambda}) \mapsto \sum_{i=1}^{k}\lambda_i y_i.
\]
It is a linear monomorphism on each fiber of the vector bundle
\[
 \nu \colon \RR^k\to V_k(\RR^N)\times_{\Sym_k}\RR^k\to V_k(\RR^N)/\Sym_k.
\]
Thus $h'$ induces a fiberwise injective map $h \colon \nu \to \theta$, where $\theta$ is
the trivial bundle over $V_k(\RR^N)/\Sym_k$ with the fiber $\RR^N$.
The maps $g$ and $h$ induce the following composition of morphisms of vector bundles:
\[
\xymatrix@!C=9em{F(X,k)\times_{\Sym_k}\RR^k  \ar[r]^{g\times_{\Sym_k}\id}  \ar[d]^{\xi_{X,k}}
& 
V_k(\RR^N)\times_{\Sym_k}\RR^k    \ar[r]^h  \ar[d] ^{\nu}    
& 
V_k(\RR^N)/\Sym_k\times\RR^N \ar[d]^{\theta}
\\
F(X,k)/\Sym_k    \ar[r]_{g/_{\Sym_k}}                     
& 
V_k(\RR^N)/\Sym_k   \ar[r]_{\id} 
&
V_k(\RR^N)/\Sym_k
}
\]
Since the morphism induced by $h$ is a linear monomorphism on each fiber, we get that
$\mathrm{coker}\big(h\colon\nu\to\theta\big)$ is a vector bundle,~\cite[Corollary~8.3, page~36]{husemoller}.  
Thus, the pullback bundle
\[
 (\mathrm{id}\circ g/_{\Sym_k})^*\mathrm{coker}\big(h\colon\nu\to\theta\big)
\]
is the required $(N-k)$-dimensional inverse of the vector bundle $\xi_{X,k}$.
\end{proof}

Now we formulate an immediate consequence of Lemmas~\ref{lem:Existence_of_a_map} 
and~\ref{lemma:equi} that gives us a direct criterion for the non-existence of a $k$-regular
map.

\begin{lemma}\
  \label{lemma:criterion}

  \begin{enumerate}[\rm (1)]
  \item \label{lemma:criterion:no_map} If $\xi_{X,k}$ does not admit an $r$-dimensional inverse, then there cannot be any
    $k$-regular map $X\to\RR^{k+r}$.

  \item \label{lemma:criterion:STW}
    If the dual Stiefel--Whitney class $\overline{w}_{r+1}(\xi_{X,k})$ does not vanish,
    then there cannot be any $k$-regular map $X\to\RR^{k+r}$.
  \end{enumerate}
\end{lemma}

Now according to Lemma~\ref{lemma:criterion}~\eqref{lemma:criterion:STW} we see that Theorem~\ref{theorem:Main-1}
is the consequence of the following result.

\begin{theorem}
\label{theorem:Main-1.1}
Let $k,d\geq 1$ be integers.
Then the  dual Stiefel--Whitney class $\overline{w}_{(d-1)(k-\alpha(k))}(\xi_{\RR^d,k})$ does not vanish.
\end{theorem}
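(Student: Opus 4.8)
The plan is to reduce the assertion to the prime‑power case $k=2^m$ and then to evaluate the dual class on an explicit finite‑dimensional model of the relevant configuration space. For the reduction, write the dyadic expansion $k=2^{m_1}+\cdots+2^{m_s}$ with $s=\alpha(k)$, choose $s$ pairwise disjoint open balls in $\RR^d$, and place $2^{m_i}$ points in the $i$‑th ball. This yields a map $\prod_i\bigl(F(\RR^d,2^{m_i})/\Sym_{2^{m_i}}\bigr)\to F(\RR^d,k)/\Sym_k$ compatible with the subgroup $\prod_i\Sym_{2^{m_i}}\le\Sym_k$, along which the permutation bundle $\xi_{\RR^d,k}$ pulls back to the external sum $\bigoplus_i\xi_{\RR^d,2^{m_i}}$. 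By the Whitney formula $\overline{w}(\xi_{\RR^d,k})$ pulls back to $\prod_i\overline{w}(\xi_{\RR^d,2^{m_i}})$, so by Künneth it suffices to show that each factor has a nonzero \emph{top} dual class in degree $(d-1)(2^{m_i}-1)$, these being in the top nonvanishing degrees; their external product then lands in degree $\sum_i(d-1)(2^{m_i}-1)=(d-1)(k-\alpha(k))$ and is nonzero. Thus everything comes down to the case $k=2^m$.

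For $k=2^m$ I would build the test space $P_m$ by the wreath‑tower recursion $P_0=\mathrm{pt}$, $P_1=\RR\mathrm{P}^{d-1}$, and $P_m=S^{d-1}\times_{\ZZ/2}(P_{m-1}\times P_{m-1})$, where $\ZZ/2$ acts antipodally on $S^{d-1}$ and by swapping the two copies of $P_{m-1}$. An induction on $m$ shows that $P_m$ is a closed connected manifold of dimension $2\,(d-1)(2^{m-1}-1)+(d-1)=(d-1)(2^m-1)$, and that placing the two sub‑configurations in opposite half‑spaces along the chosen direction defines a map $g\colon P_m\to F(\RR^d,2^m)/\Sym_{2^m}$ realizing the iterated wreath symmetry inside $\Sym_{2^m}$. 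The pullback $g^*\xi_{\RR^d,2^m}=:\eta_m$ is the associated permutation bundle over $P_m$. Since $\dim P_m$ equals exactly the target degree $(d-1)(2^m-1)$ and $H^{(d-1)(2^m-1)}(P_m;\FF_2)=\FF_2$, it is enough to prove that $\overline{w}_{(d-1)(2^m-1)}(\eta_m)$ is the nonzero top class.

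To compute it I would run the analogous recursion for the bundle. Over the base $\RR\mathrm{P}^{d-1}=S^{d-1}/\ZZ/2$ of the top level write $t_m$ for the degree‑one generator and $L_m$ for the associated line bundle, so $w(L_m)=1+t_m$. Passing to the double cover $q\colon S^{d-1}\times P_{m-1}\times P_{m-1}\to P_m$ one has $q^*\eta_m\cong p_1^*\eta_{m-1}\oplus p_2^*\eta_{m-1}$, which exhibits $\eta_m$ as the bundle induced from $\eta_{m-1}$ along $q$; its total Stiefel–Whitney class is therefore given by the Evens norm (multiplicative transfer) of $w(\eta_{m-1})$, twisted by $t_m$ through the Gysin sequence of $q$. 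The base cases are transparent: $w(\eta_1)=1+t_1$ and $\overline{w}(\eta_1)=\sum_{j=0}^{d-1}t_1^{\,j}$, whose top term $t_1^{\,d-1}$ generates $H^{d-1}(\RR\mathrm{P}^{d-1};\FF_2)$. Restricting the whole tower to the regular elementary abelian subgroup $(\ZZ/2)^m\le\Sym_{2^m}$ identifies the candidate top dual class with a Dickson‑type expression, namely (up to lower‑order corrections) the $(d-1)$‑st power of the product of all nonzero linear forms $\prod_{\epsilon\in\FF_2^m\setminus 0}\langle\epsilon,t\rangle^{\,d-1}$, which has exactly the right degree $(d-1)(2^m-1)$.

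The main obstacle is the final non‑vanishing. Over $(\RR\mathrm{P}^\infty)^m$ this Dickson product is visibly nonzero, but that space is far too large: in the truncated ring $H^*((\RR\mathrm{P}^{d-1})^m;\FF_2)=\FF_2[t_1,\dots,t_m]/(t_1^d,\dots,t_m^d)$ the degree $(d-1)(2^m-1)$ exceeds the top degree $m(d-1)$ once $m\ge 2$, so the class dies there and the elementary abelian model cannot detect it. The point is that the correct ambient space is the wreath model $P_m$, whose mod‑$2$ cohomology is strictly larger and carries the independent level classes $t_1,\dots,t_m$ subject only to the wreath relations; the surviving image of the Dickson product in $H^{(d-1)(2^m-1)}(P_m;\FF_2)$ is exactly the content of Hung's observation, which I would invoke to conclude that this top class is nonzero. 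Combining this with the Whitney/Künneth reduction of the first paragraph then yields $\overline{w}_{(d-1)(k-\alpha(k))}(\xi_{\RR^d,k})\neq 0$ for all $k,d\ge 1$, as claimed.
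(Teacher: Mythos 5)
Your reduction to the prime--power case is exactly the paper's (Lemma~\ref{lemma:dualSW-2}): disjoint balls indexed by the dyadic expansion, the pullback $\theta^*\xi_{\RR^d,k}\cong\prod_t\xi_{\RR^d,2^{r_t}}$, the Whitney product formula, and K\"unneth. That part is fine. The problem is the case $k=2^m$, and it sits precisely at the step you yourself flag as ``the main obstacle.'' Your wreath tower $P_m$ and the map $g\colon P_m\to F(\RR^d,2^m)/\Sym_{2^m}$ are a plausible detecting device, but the argument stops where the work begins: the ``Evens norm twisted by the Gysin sequence'' recursion for $w(\eta_m)$ is never carried out, the ring $H^*(P_m;\FF_2)$ in the top degree is never computed, and the final non-vanishing is delegated to an unspecified ``Hung's observation.'' As written this is a plan, not a proof.

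More importantly, the input you would need is not what Hung's theorem supplies on its own. Expanding $\overline{w}=\sum_{n\ge 0}(w_1+\cdots+w_{k-1})^n$ in degree $(d-1)(k-1)$ gives the target term $w_{k-1}^{d-1}$ plus monomials $w_1^{j_1}\cdots w_{k-1}^{j_{k-1}}$ with $0\le j_{k-1}\le d-2$; these are your ``lower-order corrections,'' and for general $d$ their multinomial coefficients do \emph{not} vanish (they do only when $d$ is a power of $2$, which is Chisholm's case). Hung's splitting of $H^*(F(\RR^d,k)/\Sym_k)$ into $\alpha_{d,k}^*\bigl(\ker(\res^{\Sym_k}_{E_m})\oplus\FF_2[q_{m,m-1},\dots,q_{m,1}]\bigr)$ and $\alpha_{d,k}^*(\langle q_{m,0}\rangle)$ kills exactly the monomials with $j_{k-1}=0$, since those have too-high degree and lie in the first summand. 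It says nothing about the monomials with $1\le j_{k-1}\le d-2$, which lie in the ideal $\langle q_{m,0}\rangle$. The paper needs a separate idea for these: a Borsuk--Ulam--Bourgin--Yang induction on the ambient dimension proving $w_{k-1}(\eta_k)\cdot\mathrm{Index}_{\Sym_k}(F(\RR^n,k))\subseteq\mathrm{Index}_{\Sym_k}(F(\RR^{n+1},k))$, which reduces such a monomial to one with $j_{k-1}=0$ over a smaller $\RR^{d-j_{k-1}}$ and forces it to vanish; only then does $\overline{w}_{(d-1)(k-1)}=w_{k-1}^{d-1}\ne 0$ follow from Lemma~\ref{lemma:dualSW-1}'s equation~\eqref{eq:sw-class-2}. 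Your outline contains no substitute for this step, and your own correct remark that the elementary abelian subgroup cannot detect the class in degree $(d-1)(2^m-1)$ shows that some such additional mechanism is unavoidable. Until you either complete the computation over $P_m$ in full or import the index-theoretic claim, the proof has a genuine gap at its decisive point.
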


Using the connectivity of the Stiefel manifold $V_k(\RR^N)$ 
and the criterion of Cohen \& Handel (Lemma~\ref{lemma:equi}) this 
theorem yields an interesting consequence.

\begin{corollary}
If $k$ is a power of~$2$, then an $\Sym_k$-equivariant map 
\[F(\RR^d,k)\rightarrow V_k(\RR^N)\]
exists if and only if $N\ge (d-1)(k-1)+k$.    
\end{corollary}

\subsection{Proof of Theorem~\ref{theorem:Main-1.1}}
\label{subsec:Proof_of_Main_1.1}

Theorem~\ref{theorem:Main-1.1} will be proved in two steps, first when $k$ is a power of $2$
(Lemma~\ref{lemma:dualSW-1}) and then for all $k\geq 1$ (Lemma~\ref{lemma:dualSW-2}).
We start with the following extension of~\cite[Lemma~3]{chisholm} and~\cite[Lemma~3.2]{cohen-handel}.
All our cohomology groups are understood to be with $\FF_2$ coefficients.

\begin{lemma}
\label{lemma:dualSW-1}
Let $d\geq 1$ be an integer, and $k=2^m$ for some $m\geq 1$.  Then the dual
Stiefel--Whitney class $\overline{w}_{(d-1)(k-1)}(\xi_{\RR^d,k})$ does not vanish.
\end{lemma}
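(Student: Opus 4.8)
The plan is to pull the whole computation back to the maximal elementary abelian $2$-subgroup. Since $\xi_{\RR^d,k}\cong\zeta_{\RR^d,k}\oplus\tau_{\RR^d,k}$ with $\tau_{\RR^d,k}$ trivial, one has $\overline{w}(\xi_{\RR^d,k})=\overline{w}(\zeta_{\RR^d,k})$, so it suffices to work with $\zeta_{\RR^d,k}$. Let $k=2^m$ and let $E=(\ZZ/2)^m\hookrightarrow\Sym_{2^m}$ be the image of the regular representation, acting freely on $F(\RR^d,2^m)$; under this embedding the coordinate permutation representation restricts to the regular representation $\RR[E]$, and its reduced summand $W_k$ becomes $\bigoplus_{0\neq v\in\FF_2^m}\RR_v$, the sum of all nontrivial real characters. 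Writing $H^*(BE;\FF_2)=\FF_2[t_1,\dots,t_m]$ and $\ell_v=\sum_{i\,:\,v_i=1}t_i$, the bundle $\zeta_{\RR^d,k}$, pulled back along the covering $F(\RR^d,2^m)/E\to F(\RR^d,2^m)/\Sym_{2^m}$, becomes the pullback along the classifying map $\kappa\colon F(\RR^d,2^m)/E\to BE$ of the split line bundle $\bigoplus_{v\neq 0}L_v$ with $w_1(L_v)=\ell_v$. Hence the pulled-back dual class equals $\kappa^*\prod_{v\neq 0}(1+\ell_v)^{-1}$, and it suffices to show that its homogeneous component of degree $(d-1)(2^m-1)$ is nonzero; this forces $\overline{w}_{(d-1)(2^m-1)}(\zeta_{\RR^d,k})\neq 0$.

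Over $BE$ the relevant component is the homogeneous piece
\[
P=\Big[\prod_{v\neq 0}(1+\ell_v)^{-1}\Big]_{(d-1)(2^m-1)}\in H^{(d-1)(2^m-1)}(BE;\FF_2).
\]
Since $GL_m(\FF_2)$ permutes the nonzero linear forms $\ell_v$ among themselves, the whole product, and therefore $P$, is $GL_m(\FF_2)$-invariant, so by Dickson's theorem $P$ lies in the Dickson algebra $\FF_2[c_{m,0},\dots,c_{m,m-1}]$, where $c_{m,0}=\prod_{v\neq 0}\ell_v$ is the product of all nonzero linear forms (of degree $2^m-1$). The extremal term of $P$ is the top Dickson power $c_{m,0}^{\,d-1}$, and $P\neq 0$ already in $H^*(BE;\FF_2)$ by a direct polynomial computation.

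The main obstacle is that $\kappa$ is only $(d-1)$-connected, because $F(\RR^d,2^m)$ is $(d-2)$-connected; thus $\kappa^*$ is an isomorphism merely in degrees below $d-1$ and has a large kernel in the degree $(d-1)(2^m-1)$ that we care about. One therefore cannot argue on $BE$ alone, but must prove that $\kappa^*P\neq 0$ in the truncated cohomology of the configuration-space quotient. My plan is to feed in the explicit additive structure of $H^*(F(\RR^d,2^m);\FF_2)$ as an $\Sym_{2^m}$-module (F.~Cohen), restrict it to $E$, and analyse the Serre spectral sequence of the fibration $F(\RR^d,2^m)\hookrightarrow F(\RR^d,2^m)/E\xrightarrow{\ \kappa\ }BE$. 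The heart of the argument is then to detect the top Dickson class: one shows that $c_{m,0}^{\,d-1}$ survives in $H^{(d-1)(2^m-1)}(F(\RR^d,2^m)/E;\FF_2)$ and that the remaining summands of $P$ do not cancel it. This is exactly the point at which the observation of Hung~\cite{hung} is needed, and it is where I expect essentially all the difficulty to lie.

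Finally, this settles only the case $k=2^m$; the passage from powers of two to an arbitrary $k$ via its dyadic expansion, by a Whitney-sum and Cartan-formula argument, is the content of the subsequent Lemma~\ref{lemma:dualSW-2}.
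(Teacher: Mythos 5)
Your setup is sound and runs parallel to the paper's: the paper also restricts along the regular embedding $E_m=(\ZZ/2)^m\hookrightarrow\Sym_{2^m}$, identifies the restricted Stiefel--Whitney classes with the Dickson invariants $q_{m,s}$ (your $c_{m,s}$), and expands $\overline{w}_{(d-1)(k-1)}$ by the multinomial theorem so that the candidate surviving term is $w_{k-1}^{d-1}$, i.e.\ the image of $q_{m,0}^{d-1}$. You have also correctly diagnosed that the whole difficulty is to show that, after pulling back to the configuration space, the remaining monomials do not cancel this term. But that is precisely the step you do not carry out: your text says this ``is where I expect essentially all the difficulty to lie'' and offers only a plan (Cohen's $\Sym_{2^m}$-module structure on $H^*(F(\RR^d,2^m))$ plus the Serre spectral sequence of $F(\RR^d,2^m)\to F(\RR^d,2^m)/E\to BE$). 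As it stands this is an outline, not a proof.

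For comparison, the paper closes this gap with three separate inputs, and your proposal accounts for at most one of them. First, $H^i(F(\RR^d,k)/\Sym_k)=0$ for $i>(d-1)(k-1)$ and $H^{(d-1)(k-1)}$ is one-dimensional, generated by $w_{k-1}^{d-1}\neq 0$ (facts \eqref{eq:sw-class-2} and \eqref{eq:sw-class-2.5}). Second, Hung's splitting \eqref{eq:sw-class-3} shows that the images under $\alpha_{d,k}^*$ of $\ker(\res^{\Sym_k}_{E_m})\oplus\FF_2[q_{m,m-1},\ldots,q_{m,1}]$ and of $\langle q_{m,0}\rangle$ intersect trivially; combined with the first input this kills every monomial $w_1^{j_1}\cdots w_{k-2}^{j_{k-2}}$ of degree at least $(d-1)(k-1)$. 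Third --- and this is entirely absent from your proposal --- the monomials containing $w_{k-1}^{j_{k-1}}$ with $1\leq j_{k-1}\leq d-2$ are \emph{not} handled by Hung's splitting alone; the paper kills them by a Borsuk--Ulam--Bourgin--Yang induction on the dimension, proving $w_{k-1}(\eta_k)\cdot\mathrm{Index}_{\Sym_k}(F(\RR^n,k))\subseteq\mathrm{Index}_{\Sym_k}(F(\RR^{n+1},k))$ and then reducing degree in $w_{k-1}$ at the cost of lowering $d$. Without an argument of this kind your ``remaining summands of $P$ do not cancel it'' is unsubstantiated, and attacking it via the Serre spectral sequence of the $E$-quotient would in effect require reproving Hung's computation of $H^*(F(\RR^d,2^m)/E)$ rather than citing his decomposition of $H^*(F(\RR^d,2^m)/\Sym_{2^m})$.
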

\begin{proof}
\textbf{Step 1.}
Let $E_m =(\ZZ/2)^m$ be the subgroup of $\Sym_k$ given by the regular embedding $(\mathrm{reg})\colon E_m\to\Sym_k$,~\cite[Example~2.7, page~100]{adem-milgram}.
The regular embedding is determined by the left translation action of $(\ZZ/2)^m$ on itself. 
 To each $g\in (\ZZ/2)^m$ we associate a permutation $L_g\colon  (\ZZ/2)^m\to  (\ZZ/2)^m$ from $\mathrm{Sym}((\ZZ/2)^m)\cong\Sym_k$ given by $L_g(x)=g+x$.
 
\noindent
The image of the restriction of $H^*(\Sym_k)$ from $\Sym_k$ to $E_m$ is $H^*(E_m)^{\mathrm{GL}_m(\FF_2)}$.
There are specific elements  $q_{m,s}$  in $H^{2^m-2^s}(E_m)^{\mathrm{GL}_m(\FF_2)}$ 
for $0\leq s\leq m-1$, called the {\em Dickson invariants}, such that $H^*(E_m)^{\mathrm{GL}_m(\FF_2)}$
is isomorphic to $\FF_2[q_{m,m-1},\ldots,q_{m,0}]$ as a graded $\FF_2$-algebra,
see~\cite[Chapter~3.E on page~57ff]{madsen-milgram}. Let $\langle q_{m,0}\rangle$ denote the
ideal generated by the top Dickson invariant $q_{m,0}$ inside the polynomial ring
$\FF_2[q_{m,m-1},\ldots,q_{m,0}]$. Then we obtain a sequence of isomorphisms of graded $\FF_2$-modules
\begin{eqnarray*}
H^*(\Sym_k)    &\cong& \ker(\res^{\Sym_k}_{E_m})\oplus \im (\res^{\Sym_k}_{E_m})\\
                     &\cong& \ker(\res^{\Sym_k}_{E_m})\oplus H^*(E_m)^{\mathrm{GL}_m(\FF_2)}\\
                     &\cong& \ker(\res^{\Sym_k}_{E_m})\oplus \FF_2[q_{m,m-1},\ldots,q_{m,0}]\\
                     &\cong& \ker(\res^{\Sym_k}_{E_m})\oplus \FF_2[q_{m,m-1},\ldots,q_{m,1}]\oplus\langle q_{m,0}\rangle.
\end{eqnarray*}

\medskip
\noindent
\textbf{Step 2.} Let $\eta_k$ be the vector bundle given by the Borel construction for the
permutation action of the symmetric group $\Sym_k$ on $\RR^k$:
\[
\RR^k\longrightarrow \EE\Sym_k\times_{\Sym_k}\RR^k\longrightarrow \BB\Sym_k.
\]
When $k=2^m$ is a power of $2$, we conclude from~\cite[Lemma~3.26 in Chapter~3.E on page~59]{madsen-milgram} that
\[
  w_i(\eta_k )
    \begin{cases}
     =q_{m,s},                          & \text{for} \; i=2^m-2^s\, \text{and} \; 0\leq s\leq m-1;\\
      \in \ker(\res^{\Sym_k}_{E_m}),          & \text{otherwise}.
    \end{cases}
    \]

\smallskip
\noindent
Now consider the vector bundle $\xi_{\RR^d,k}$ and the corresponding classifying map 
\[\alpha_{d,k}\colon F(\RR^d,k)/\Sym_k\to\BB\Sym_k\] 
associated to the free $\Sym_k$-space $F(\RR^d,k)$.
It induces a pullback morphism of vector bundles
$\xi_{\RR^d,k}\to\eta_k$.  Thus the Stiefel--Whitney classes of the vector bundle
$\xi_{\RR^d,k}$ are given by:
\begin{multline}
\label{eq:sw-class-1}
w_i:=w_i(\xi_{\RR^d,k})=\alpha_{d,k}^*w_i(\eta_k)
    \\
    \begin{cases}
     =0,                                                  & \text{for} \; i\ge 2^m=k,\\
     =\alpha_{d,k}^*q_{m,s},                              & \text{for} \; i=2^m-2^s\; \text{and}\; 0\leq s\leq m-1,\\
     \in \alpha_{d,k}^*(\ker(\res^{\Sym_k}_{E_m})),        & \text{otherwise}
    \end{cases}
\end{multline}
 
Here are three additional known facts on Stiefel--Whitney classes of the vector bundles $\xi_{\RR^d,k}$ and $\eta_k$ that we will use:
\begin{itemize}[$\bullet$]
\item From~\cite[Lemma~8.14]{blagojevic-luck-ziegler} we have that
  \begin{equation}
    \label{eq:sw-class-2}
    0\neq w_{(d-1)(k-1)}(\xi_{\RR^d,k}^{\oplus(d-1)})=w_{k-1}^{d-1}\in
    H^{(d-1)(k-1)}(F(\RR^d,k)/\Sym_k),
  \end{equation}
  or in another words, since $w_{k-1}^{d-1}=\alpha_{d,k}^*(q_{m,0}^{d-1})$,
  \begin{equation}
    \label{eq:sw-class-2-FH}
    q_{m,0}^{d-1}\notin
    \mathrm{Index}_{\Sym_k}(F(\RR^d,k)):=\ker \alpha_{d,k}^*;
  \end{equation}

\item Moreover, as in~\cite[Corollary~4.4]{blagojevic-ziegler-convex}, specializing to
  $\FF_2$ coefficients we have
  \begin{equation}
    \label{eq:sw-class-2.5}
    H^{(d-1)(k-1)}(F(\RR^d,k)/\Sym_k)=\langle w_{k-1}^{d-1}\rangle
    =
    \langle (\alpha_{d,k}^*q_{m,0})^{d-1}\rangle\cong\FF_2,
  \end{equation}
  and in dimensions $i>(d-1)(k-1)$ we get $H^i(F(\RR^d,k)/\Sym_k)=0$;

\item The following decomposition of $H^*(F(\RR^d,k)/\Sym_k)$ was proved by Hung in~\cite[(4.7), page~279]{hung}:
  \begin{multline}
    \label{eq:sw-class-3}
    H^*(F(\RR^d,k)/\Sym_k)
    \\
    \cong
    \alpha_{d,k}^*\big(\ker(\res^{\Sym_k}_{E_m})\oplus \FF_2[q_{m,m-1},\ldots,q_{m,1}]\big)
    \oplus 
    \alpha_{d,k}^*(\langle q_{m,0}\rangle).
  \end{multline}
  In particular, this implies that
  \[
   \alpha_{d,k}^*\big(\ker(\res^{\Sym_k}_{E_m})\oplus \FF_2[q_{m,m-1},\ldots,q_{m,1}]\big)
   \cap
   \alpha_{d,k}^*(\langle q_{m,0}\rangle)
   =\{0\}.
  \]

\end{itemize}
Directly from~\eqref{eq:sw-class-2.5} and~\eqref{eq:sw-class-3} we conclude that:
\begin{multline*}
\label{eq:sw-class-4}
u\in \ker(\res^{\Sym_k}_{E_m})\oplus \FF_2[q_{m,m-1},\ldots,q_{m,1}]
\; \text{and} \; 
\deg u\geq (d-1)(k-1)
\\
\Longrightarrow
u\in \mathrm{Index}_{\Sym_k}(F(\RR^d,k)).
\end{multline*}
Thus, for all $j_1,\ldots,j_{k-2}\geq 0$, such that $\sum_{r=1}^{k-2}r\cdot j_r\geq (d-1)(k-1)$:
\begin{equation}
\label{eq:sw-class-4.1}
w_1^{j_1}\cdots w_{k-2}^{j_{k-2}}=0\in H^*(F(\RR^d,k)/\Sym_k),
\end{equation}
or, equivalently,  in the notation of the Fadell--Husseini index \cite{fadell-husseini}: 
\begin{equation}
\label{eq:sw-class-4.11}
w_1(\eta_k)^{j_1} \cdots  w_{k-2}(\eta_k)^{j_{k-2}}\in \mathrm{Index}_{\Sym_k}(F(\RR^d,k)).
\end{equation}

\medskip
\noindent
\textbf{Step 3.}
Next we prove that for all $j_1,\ldots,j_{k-2}\geq 0$ and $1\leq j_{k-1}\leq d-2$
such that $\sum_{r=1}^{k-1}r\cdot j_r\geq (d-1)(k-1)$ we have
\begin{equation}
\label{eq:sw-class-4.6}
w_1^{j_1}\cdots w_{k-2}^{j_{k-2}}w_{k-1}^{j_{k-1}}=0\in H^*(F(\RR^d,k)/\Sym_k).
\end{equation}
or equivalently
\begin{equation}
w_1(\eta_k)^{j_1}\cdots w_{k-2}(\eta_k)^{j_{k-2}}w_{k-1}(\eta_k)^{j_{k-1}}\in\mathrm{Index}_{\Sym_k}(F(\RR^d,k)).
\label{eq:sw-class-4.789}
\end{equation}
In order to prove the equivalent equations~\eqref{eq:sw-class-4.6} and~\eqref{eq:sw-class-4.789}
we need the following claim which we will show next.
\begin{itemize}
\item[{\bf Claim.}]{\em Let $n\geq 2$ and $k=2^m$. Then
      \[
        w_{k-1}(\eta_k)\cdot \mathrm{Index}_{\Sym_k}(F(\RR^n,k))\subseteq \mathrm{Index}_{\Sym_k}(F(\RR^{n+1},k)).
        \]} 
 \end{itemize}
    The claim is a consequence of the general
      Borsuk--Ulam--Bourgin--Yang theorem~\cite[Section~6.1]{blagojevic-luck-ziegler},
      applied to
      \begin{compactitem}[$\bullet$]
      \item the $\Sym_k$-equivariant map $\phi\colon F(\RR^{n+1},k)\to W_k$ that is the
        composition of the obvious inclusion $F(\RR^{n+1},k)\to (\RR^{n+1})^k$ and the two orthogonal
        projections $(\RR^{n+1})^k\to (\RR\times\{0\}\times\cdots\times\{0\})^k=\RR^k$ and
        $\RR^k\to W_k$, and
      \item the set $Z:=\{0\}\subseteq W_k$.
      \end{compactitem}
      Thus by the general Borsuk--Ulam--Bourgin--Yang theorem we get:
      \[
       \mathrm{Index}_{\Sym_k}(W_k{\setminus}Z)\cdot\mathrm{Index}_{\Sym_k}(\phi^{-1}(Z))
      \subseteq \mathrm{Index}_{\Sym_k}(F(\RR^{n+1},k)).
      \]
      First, $W_k{\setminus}Z=W_k{\setminus}\{0\}$ is $\Sym_k$-homotopy equivalent to the sphere $S(W_k)$.
      Thus \[\mathrm{Index}_{\Sym_k}(W_k{\setminus}\{0\})
     =\mathrm{Index}_{\Sym_k}(S(W_k))=\langle e(\nu_k)\rangle,\]
      where $e(\nu_k)$ is the Euler class, with $\FF_2$-coefficients, of the vector bundle $\nu_k$:
      \[
       W_k\longrightarrow \EE\Sym_k\times_{\Sym_k}\RR^k\longrightarrow \BB\Sym_k,
      \]
      see~\cite[Proof of Proposition~3.11, page~1338]{blagojevic-ziegler}. In this
      case, due to $\FF_2$
      coefficients, \[e(\nu_k)=w_{k-1}(\nu_k)=w_{k-1}(\eta_k)=q_{m,0}.\] The inverse image
      $\phi^{-1}(Z)$ is the set of points $(x_1,\ldots,x_k)\in F(\RR^{n+1},k)$ in the
      configuration space such that all first coordinates of the points
      $x_1,\ldots,x_k$ are equal.  Thus we can identify $\phi^{-1}(Z)$ with
      $\RR\times F(\RR^n,k)\simeq_{\Sym_k} F(\RR^n,k)$.\newline 
      All these facts imply that:
      \[
       \langle w_{k-1}(\eta_k)\rangle\cdot \mathrm{Index}_{\Sym_k}(F(\RR^n,k))\ 
       \subseteq \mathrm{Index}_{\Sym_k}(F(\RR^{n+1},k)).
      \]
This finishes the proof of the claim above.

Let us assume that $j_1,\ldots,j_{k-2}\geq 0$ and $1\leq j_{k-1}\leq d-2$ with $\sum_{r=1}^{k-1}r\cdot j_r\geq (d-1)(k-1)$.
Thus $\sum_{r=1}^{k-2}r\cdot j_r\geq (d-1-j_{k-1})(k-1)$.
We conclude from~\eqref{eq:sw-class-4.11}
\begin{equation*}
w_1(\eta_k)^{j_1}\cdots w_{k-2}(\eta_k)^{j_{k-2}}\in\mathrm{Index}_{\Sym_k}(F(\RR^{d-j_{k-1}},k)).
\end{equation*}
Now the validity of the equivalent equations~\eqref{eq:sw-class-4.6} and~\eqref{eq:sw-class-4.789} follows from the claim.

\medskip
\noindent
\textbf{Step 4.} Finally, the dual Stiefel--Whitney class
$\overline{w}_{(d-1)(k-1)}:=\overline{w}_{(d-1)(k-1)}(\xi_{\RR^d,k})$, is defined by
$\overline{w}=\tfrac{1}{1+w_1+w_2+\cdots}=\sum_{n\geq 0}(w_1+w_2+\cdots)^n$.
The multinomial theorem implies the following presentation:
\begin{eqnarray*}
    \lefteqn{\overline{w}_{(d-1)(k-1)}}
    & &
    \\   
     &  =  &
      \sum_{\substack{j_{1},\ldots ,j_{k-1}\geq 0  \\ %
          j_{1}+2j_2+\cdots +(k-1)j_{k-1}=(d-1)(k-1)}}\binom{j_{1}+\cdots +j_{k-1}}{j_{1},\ j_{2},\
        \ldots, \ j_{k-1}}\, w_{1}^{j_{1}} \cdots  w_{k-1}^{j_{k-1}}\\
      \\
      &  =  &
      w_{k-1}^{d-1} +
      \sum_{\substack{j_{1},\ldots ,j_{k-2}\geq 0;\,d-2\geq j_{k-1}\geq 0  \\ %
          j_{1}+2j_2+\cdots +(k-1)j_{k-1}=(d-1)(k-1)}}\binom{j_{1}+\cdots +j_{k-1}}{j_{1},\ j_{2},\
        \ldots, \ j_{k-1}}\, w_{1}^{j_{1}} \cdots  w_{k-1}^{j_{k-1}},
\end{eqnarray*}
    where $\binom{j_{1}+\cdots +j_{k-1}}{j_{1},\ i_{2},\ \ldots, \ j_{k-1}}$ stands for the multinomial coefficient
  $\tfrac{\left( j_{1}+\cdots +j_{k-1}\right) {!}}{\left(j_{1}\right) {!} \cdots \left(
      j_{k-1}\right) {!}}$ modulo $2$.

\medskip

For $d$ and $k$ powers of $2$, Chisholm proved \cite[Lemma 3]{chisholm} that $\overline{w}_{(d-1)(k-1)}\neq 0$
by evaluating a pullback of  $\overline{w}_{(d-1)(k-1)}$ on a specific homology class $Q^I[1]$.
Here $Q^I$ denotes a Dyer--Lashof operation where 
\[
I=(2^{m-1}(d-1),\ldots,2(d-1),d-1)
\] 
is an admissible sequence of degree $(d-1)(k-1)$ and excess $d-1$.
First, he proved that the pullback of $w_{k-1}^{d-1}$ evaluated at $Q^I[1]$ is nonzero.
Then he verified that the pullback of $w_{1}^{j_{1}} \cdots  w_{k-1}^{j_{k-1}}$ evaluated at $Q^I[1]$ is zero if at least one of $j_i$ is nonzero for $i<k-1$ odd. 
Finally, he showed that all multinomial coefficients
\[
 \binom{j_{1}+\cdots +j_{k-1}}{j_{1},\ j_{2},\ \ldots, \ j_{k-1}} =0
\]
vanish in the case when $d$ is power of two, 
$0\leq j_{k-1}\leq d-2$ and $j_{1}=j_{3}=\cdots =j_{k-3}=0$ with
\[j_{1}+2j_2+\cdots +(k-1)j_{k-1}=(d-1)(k-1).\]
For details consult \cite[pages~188-189]{chisholm}.

\medskip

Here $d\geq 2$ is arbitrary  (but still $k=2^m$).
We use the fact~\eqref{eq:sw-class-2}, but instead of analyzing 
multinomial coefficients we consider the monomials in Stiefel--Whitney classes
\[
 w_{1}^{j_{1}} \cdots  w_{k-1}^{j_{k-1}},
\]
for $j_{1},\ldots ,j_{k-2}\geq 0$, $1\leq j_{k-1}\leq d-2$ and $j_{1}+2j_2+\cdots +(k-1)j_{k-1}=(d-1)(k-1)$.
According to~\eqref{eq:sw-class-4.1} and~\eqref{eq:sw-class-4.6} all these monomials vanish.
Therefore,
\[
 \overline{w}_{(d-1)(k-1)}= w_{k-1}^{d-1},
\]
which does not vanish, by \eqref{eq:sw-class-2}.
This finishes the proof of Lemma~\ref{lemma:dualSW-1}.
\end{proof}

For later purposes we  prove
\begin{corollary}
\label{cor:SW-monomials}
Consider a matrix $[j_{r,s}]_{1\leq r\leq t,1\leq s\leq k-1}$ of non-negative integers with pairwise distinct rows.
Assume that for some $0\leq j\leq d-1$ and each $1\leq r\leq t$
\[
 \sum_{s=1}^{k-1}s\cdot j_{r,s} = (k-1)j.
\]
Then, assuming the notation of Lemma~\ref{lemma:dualSW-1}, with $w_i:=w_i(\xi_{\RR^d,k})$, $d\geq 2$ and $k=2^m$:
\begin{equation}
\label{eq:eq}
 \sum_{r=1}^{t}\lambda_r\cdot w_1^{j_{r,1}}\cdots w_{k-2}^{j_{r,k-2}}w_{k-1}^{j_{r,k-1}}=w_{k-1}^{j}
\end{equation}
for some $\lambda_1,\ldots,\lambda_t\in\FF_2$ if and only if there exists a (unique) $r_0 \in \{1,2, \ldots , t\}$ 
such that
\begin{itemize}
 \item $\lambda_r=0$ if and only if $r\neq r_0$, and
 \item $j_{r_0,1}=\cdots=j_{r_0,k-2}=0,\,j_{r_0,k-1}=j$.
\end{itemize}
\end{corollary}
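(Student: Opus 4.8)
The plan is to prove the two implications separately, the backward one being immediate and the forward one carrying all the content. For the backward direction, if a (necessarily unique) row $r_0$ carries the pure exponent pattern $j_{r_0,1}=\cdots=j_{r_0,k-2}=0$ and $j_{r_0,k-1}=j$, then the associated monomial is literally $w_{k-1}^{j}$, so the choice $\lambda_{r_0}=1$ and $\lambda_r=0$ for $r\neq r_0$ solves \eqref{eq:eq} with no cohomological input at all; distinctness of the rows is exactly what guarantees that at most one row can carry this pattern, hence the uniqueness of $r_0$. So the corollary really asserts that this canonical solution can be produced precisely when \eqref{eq:eq} is solvable.

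For the forward direction I would pass to the top nonvanishing cohomology group by multiplying the hypothetical relation \eqref{eq:eq} by $w_{k-1}^{d-1-j}$, which is legitimate since $0\le j\le d-1$. By \eqref{eq:sw-class-2.5} we have $H^{(d-1)(k-1)}(F(\RR^d,k)/\Sym_k)\cong\FF_2$, generated by $w_{k-1}^{d-1}$, and this class is nonzero by \eqref{eq:sw-class-2}; after multiplication every monomial on the left therefore lands in a group isomorphic to $\FF_2$ and is either $0$ or equal to $w_{k-1}^{d-1}$, while the right-hand side becomes exactly $w_{k-1}^{d-1}$. The crux is that multiplication by $w_{k-1}^{d-1-j}$ annihilates every non-pure summand: writing such a summand as $w_1^{j_{r,1}}\cdots w_{k-2}^{j_{r,k-2}}w_{k-1}^{c}$ with $c=j_{r,k-1}$, the constraint $\sum_{s=1}^{k-1}s\cdot j_{r,s}=(k-1)j$ together with some $j_{r,s}>0$ for $s\le k-2$ forces $c<j$, so the shifted exponent $e:=c+d-1-j$ satisfies $0\le e\le d-2$. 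When $1\le e\le d-2$ the product vanishes by \eqref{eq:sw-class-4.6}, and when $e=0$ (which can occur only for $j=d-1$ and $c=0$) it reduces to a monomial purely in $w_1,\ldots,w_{k-2}$ of degree $(d-1)(k-1)$ and vanishes by \eqref{eq:sw-class-4.1}.

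Only the pure monomial then survives, since $w_{k-1}^{j}\cdot w_{k-1}^{d-1-j}=w_{k-1}^{d-1}\neq 0$, so the multiplied relation collapses to $\lambda_{r_0}\cdot w_{k-1}^{d-1}=w_{k-1}^{d-1}$, where $\lambda_{r_0}$ is read as $0$ if no pure row is present. As $w_{k-1}^{d-1}\neq 0$ in $\FF_2$-cohomology, this forces $\lambda_{r_0}=1$, so a pure row must exist and is unique by distinctness; combined with the backward direction this delivers the canonical solution. I expect the main obstacle to be the case analysis of the previous paragraph, specifically the boundary value $e=0$, where \eqref{eq:sw-class-4.6} does not apply and one must fall back on \eqref{eq:sw-class-4.1}; once the correct vanishing statement is matched to each range the conclusion is forced. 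I would also flag, to forestall a misreading, that the corollary does not claim uniqueness of the full tuple $(\lambda_1,\ldots,\lambda_t)$: for $j=d-1$ all non-pure monomials already vanish in cohomology and their coefficients are unconstrained, so what is asserted is only the existence of the distinguished solution supported on the pure row.
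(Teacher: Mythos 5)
Your proof is correct and follows essentially the same route as the paper's: multiply \eqref{eq:eq} by $w_{k-1}^{d-1-j}$, note that the right-hand side $w_{k-1}^{d-1}$ is nonzero by \eqref{eq:sw-class-2}, and annihilate every non-pure monomial using \eqref{eq:sw-class-4.1} and \eqref{eq:sw-class-4.6}. Your explicit case split on the shifted exponent $e=j_{r,k-1}+d-1-j$ (in particular the boundary case $e=0$, which the paper leaves implicit) and your caveat that the coefficients of non-pure rows are not literally forced to vanish when their monomials are already zero in cohomology are both accurate refinements of the paper's terser argument.
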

\begin{proof}
Multiplying the equation \eqref{eq:eq} with $w_{k-1}^{d-1-j}$ we get
\[
\sum_{r=1}^{t}\lambda_r\cdot w_1^{j_{r,1}}\cdots w_{k-2}^{j_{r,k-2}}w_{k-1}^{j_{r,k-1}+d-1-j}=w_{k-1}^{d-1}.
\]
The equation \eqref{eq:sw-class-2} implies that the right hand side of the equation does not vanish.
We conclude from  equations~\eqref{eq:sw-class-4.1} and~\eqref{eq:sw-class-4.6} 
that $w_1^{j_{r,1}}\cdots w_{k-2}^{j_{r,k-2}}w_{k-1}^{j_{r,k-1}+d-1-j}$ is different from zero only if
$j_{r,1}=\cdots=j_{r,k-2}=0,\,j_{r,k-1}=j$ holds. 
\end{proof}
In order to complete the proof of Theorem~\ref{theorem:Main-1} we extend Lemma~\ref{lemma:dualSW-1}  to all
$k\geq 1$ and prove the final fact. 

\begin{lemma}
\label{lemma:dualSW-2}
Let $d,k\geq 1$ be integers.
Then the dual Stiefel--Whitney class $\overline{w}_{(d-1)(k-\alpha(k))}(\xi_{\RR^d,k})$ does not vanish.
\end{lemma}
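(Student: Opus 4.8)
The plan is to reduce the general statement to the already-settled power-of-two case of Lemma~\ref{lemma:dualSW-1} by restricting the bundle along a Young subgroup. Write the dyadic expansion $k = 2^{m_1} + \cdots + 2^{m_s}$ with $s = \alpha(k)$, and set $H = \Sym_{2^{m_1}} \times \cdots \times \Sym_{2^{m_s}}$, regarded as a Young subgroup of $\Sym_k$. First I would choose $s$ pairwise disjoint open balls $B_1,\ldots,B_s$ in $\RR^d$ and use them to build an inclusion
\[
\iota\colon \prod_{j=1}^{s} F(B_j, 2^{m_j}) \longrightarrow F(\RR^d, k)
\]
that places the points of the $j$-th factor into $B_j$. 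Since each $B_j$ is homeomorphic to $\RR^d$, so that $F(B_j, 2^{m_j}) \cong F(\RR^d, 2^{m_j})$, this map is $H$-equivariant (with $H$ acting on the left as the product of permutation actions, and on the right by restriction of the $\Sym_k$-action). Passing to orbit spaces yields a map $\bar\iota\colon \prod_{j} F(\RR^d, 2^{m_j})/\Sym_{2^{m_j}} \to F(\RR^d, k)/\Sym_k$.

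Next I would identify the pullback $\bar\iota^* \xi_{\RR^d, k}$. The fiber representation $\RR^k$ of $\Sym_k$ restricts along $H \hookrightarrow \Sym_k$ to the direct sum $\bigoplus_{j} \RR^{2^{m_j}}$ of the permutation representations of the factors. Consequently the associated Borel bundle splits as an external Whitney sum
\[
\bar\iota^* \xi_{\RR^d, k} \;\cong\; \bigoplus_{j=1}^{s} \pr_j^* \xi_{\RR^d, 2^{m_j}},
\]
where $\pr_j$ denotes the projection onto the $j$-th factor. By multiplicativity of the total dual Stiefel--Whitney class this gives
\[
\bar\iota^* \overline{w}(\xi_{\RR^d, k}) \;=\; \prod_{j=1}^{s} \pr_j^* \overline{w}(\xi_{\RR^d, 2^{m_j}}).
\]

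The conclusion then follows from a degree count combined with the K\"unneth theorem over $\FF_2$. Put $D := (d-1)(k - \alpha(k)) = \sum_{j} (d-1)(2^{m_j} - 1)$. By \eqref{eq:sw-class-2.5} applied to each power of two, the group $H^i(F(\RR^d, 2^{m_j})/\Sym_{2^{m_j}})$ vanishes for $i > (d-1)(2^{m_j} - 1)$; hence in the degree-$D$ component of the product above the only surviving summand is the one in which every factor contributes its top class, so that
\[
\bar\iota^* \overline{w}_{D}(\xi_{\RR^d, k}) \;=\; \prod_{j=1}^{s} \pr_j^* \overline{w}_{(d-1)(2^{m_j}-1)}(\xi_{\RR^d, 2^{m_j}}).
\]
Each factor is nonzero by Lemma~\ref{lemma:dualSW-1}, and since over $\FF_2$ the cohomology of the product is the tensor product of the cohomologies, this external product is nonzero. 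By naturality of Stiefel--Whitney classes $\bar\iota^* \overline{w}_{D}(\xi_{\RR^d, k}) = \overline{w}_{D}(\bar\iota^*\xi_{\RR^d, k})$, whence $\overline{w}_{D}(\xi_{\RR^d, k})$ cannot vanish. The edge cases $k = 1$ and $d = 1$ are trivial, since then $D = 0$ and $\overline{w}_0 = 1$.

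The step I expect to be the main obstacle is the bookkeeping in the second paragraph: checking that the permutation representation genuinely restricts to the claimed direct sum over the Young subgroup and, accordingly, that the pulled-back Borel bundle splits as the external Whitney sum of the pieces $\xi_{\RR^d, 2^{m_j}}$, so that naturality and multiplicativity apply. Everything after that is a formal consequence of the vanishing range from \eqref{eq:sw-class-2.5} and the K\"unneth isomorphism.
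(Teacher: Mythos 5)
Your proposal is correct and follows essentially the same route as the paper: both decompose $k$ dyadically, embed $\prod_j F(\RR^d,2^{m_j})/\Sym_{2^{m_j}}$ into $F(\RR^d,k)/\Sym_k$ via disjoint balls, identify the pullback of $\xi_{\RR^d,k}$ with the external sum of the $\xi_{\RR^d,2^{m_j}}$, and conclude via the Whitney product formula, the K\"unneth theorem, and Lemma~\ref{lemma:dualSW-1}. The only (immaterial) difference is in the last step, where you isolate the top-degree term using the vanishing range from~\eqref{eq:sw-class-2.5}, while the paper merely observes that the summands lie in distinct K\"unneth components and exhibits one nonzero term.
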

\begin{proof}
  Let $a:=\alpha(k)$ and $k=2^{r_1}+\cdots+2^{r_a}$ where $0\leq r_1<r_2<\cdots<r_a$.  We define
  a morphism of vector bundles $\prod_{t=1}^{a}\xi_{\RR^d,2^{r_t}}$ and $\xi_{\RR^d,k}$
  such that the following commutative square is a pullback diagram: 
 \[
\xymatrix{\prod_{t=1}^{a}\xi_{\RR^d,2^{r_t}} \ar[r]^-{\Theta}  \ar[d] 
& \xi_{\RR^d,k}   \ar[d] 
\\
\prod_{t=1}^{a}F(\RR^d,2^{r_t})/\Sym_{2^{r_t}}  \ar[r]_-{\theta}   & 
F(\RR^d,k)/{\Sym_k}.
}
\]
Choose embeddings $e_i \colon \RR^d \to \RR^d$ for $i = 1,2 \ldots , a$ such that their
images are pairwise disjoint open $d$-balls. They induces embeddings $F(\RR^d,\ell) \to F(\RR^d,\ell)$ denoted
by the same letter $e_i$ for all natural numbers $\ell$. The map $\theta$ is induced by the map
\[
\prod_{t=1}^{a}F(\RR^d,2^{r_t}) \to F(\RR^d,k),
\quad (\underline{x_1} ,\ldots , \underline{x_a})  \mapsto e_1(\underline{x_1}) \times \cdots \times e_a(\underline{x_a}).
\]
The map $\Theta$ is given by
\[
\bigl((\underline{x_1},v_1), \ldots ,(\underline{x_a},v_a)\bigr)
\mapsto
\bigl(e_1(\underline{x_1}) \times \cdots \times e_a(\underline{x_a}), v_1\times \cdots\times  v_a\bigr).
\]
Thus, the pullback bundle is a direct product bundle
\begin{eqnarray}
\theta^*\xi_{\RR^d,k} & \cong & \prod_{t=1}^{a}\xi_{\RR^d,2^{r_t}}.
\label{iso_Theta_xi_is_prod}
\end{eqnarray}

Now the naturality property of the Stiefel--Whitney classes~\cite[Axiom~2,
page~37]{milnor-stasheff} implies that in cohomology we get
\[
\theta^*\overline{w}_{(d-1)(k-a)}(\xi_{\RR^d,k})=\overline{w}_{(d-1)(k-a)}\Big(\prod_{t=1}^{a}\xi_{\RR^d,2^{r_t}}\Big).
\]
The product formula~\cite[Problem~4-A, page~54]{milnor-stasheff} gives us the following
equality of total dual Stiefel--Whitney classes
\[
 \overline{w}\Big(\prod_{t=1}^{a}\xi_{\RR^d,2^{r_t}}\Big)=\overline{w}(\xi_{\RR^d,2^{r_1}})\times\cdots\times\overline{w}(\xi_{\RR^d,2^{r_a}}).
\]
Consequently
\begin{eqnarray*}
 \theta^*\overline{w}_{(d-1)(k-a)}(\xi_{\RR^d,k}) 
& = & 
\overline{w}_{(d-1)(k-a)}\left(\prod_{t=1}^{a}\xi_{\RR^d,2^{r_t}}\right)
\\
& = &
\sum_{s_1+\cdots +s_a=(d-1)(k-a)}\overline{w}_{s_1}(\xi_{\RR^d,2^{r_1}})\times\cdots\times\overline{w}_{s_a}(\xi_{\RR^d,2^{r_a}}).
\end{eqnarray*}
Since each term $\overline{w}_{s_1}(\xi_{\RR^d,2^{r_1}})\times\cdots\times\overline{w}_{s_a}(\xi_{\RR^d,2^{r_a}})$ of the previous sum
belongs to a different direct summand of the cohomology, when the K\"unneth formula is applied,
\begin{multline*}
H^{(d-1)(k-a)}\Big(\prod_{t=1}^{a}F(\RR^d,2^{r_t})/\Sym_{2^{r_t}}\Big)\\
\cong
\bigoplus_{s_1+\cdots +s_a=(d-1)(k-a)}H^{s_1}(F(\RR^d,2^{r_1})/\Sym_{2^{r_1}})\otimes\cdots\otimes H^{s_a}(F(\RR^d,2^{r_a})/\Sym_{2^{r_a}})
\end{multline*}
we get the following criterion:
\begin{multline*}
\overline{w}_{(d-1)(k-a)}(\prod_{t=1}^{a}\xi_{\RR^d,2^{r_t}}) \neq 0 \Longleftrightarrow\\
\overline{w}_{s_1}(\xi_{\RR^d,2^{r_1}})\times\cdots\times\overline{w}_{s_a}(\xi_{\RR^d,2^{r_a}})\neq 0
\; \text{for some} \; s_1+\cdots +s_a=(d-1)(k-a).
\end{multline*}
By Lemma~\ref{lemma:dualSW-1} we have that
$\overline{w}_{(d-1)(2^{r_t}-1)}(\xi_{\RR^d,2^{r_t}})\neq 0$, and therefore
\begin{multline*}
0\neq  \overline{w}_{(d-1)(2^{r_1}-1)}(\xi_{\RR^d,2^{r_1}})\times\cdots\times\overline{w}_{(d-1)(2^{r_a}-1)}(\xi_{\RR^d,2^{r_a}})\in \\
 H^{(d-1)(k-a)}\Big(\prod_{t=1}^{a}F(\RR^d,2^{r_t})/\Sym_{2^{r_t}}\Big).
\end{multline*}
Thus, $\theta^*\overline{w}_{(d-1)(k-a)}(\xi_{\RR^d,k})=\overline{w}_{(d-1)(k-a)}(\prod_{t=1}^{a}\xi_{\RR^d,2^{r_t}})\neq 0$ and consequently
\[\overline{w}_{(d-1)(k-a)}(\xi_{\RR^d,k})\neq 0.\vspace{-6mm}\]
\end{proof}

\begin{remark}
The result $\overline{w}_{(d-1)(k-\alpha(k))}(\xi_{\RR^d,k})\neq 0$ of Lemma~\ref{lemma:dualSW-2} 
provides an alternative proof of Roth's result \cite[Theorem 1.4, page 449]{roth}
on the Lusternik--Schnirelmann category of the unordered configuration space:
\[
 \cat\left(F(\RR^d,k)/\Sym_k\right)\geq (d-1)(k-\alpha(k)).
\]
for every $d,k\geq 2$.
\end{remark}

Now, from Lemma~\ref{lemma:criterion}~\eqref{lemma:criterion:STW} and Lemma~\ref{lemma:dualSW-2},
 we conclude that there cannot be any $k$-regular map
\[
 \RR^d\to\RR^{d(k-\alpha (k))+\alpha (k)-1}.
\]
This finishes the proof of Theorem~\ref{theorem:Main-1}.

Finally, when Theorem~\ref{theorem:Main-1} is combined with 
Example~\ref{exa:special_maps}~\eqref{exa:special_maps:(3)} it implies
Corollary~\ref{cor:1}, and when
Example~\ref{exa:special_maps}~\eqref{exa:special_maps:(1)} is used,
we get Corollary~\ref{cor:2}.

\section{$\ell$-skew embeddings}

In this section we will first define $\ell$-skew embeddings, which were previously considered
for $\ell=2$ by Ghomi \& Tabachnikov~\cite{ghomi-tabachnikov}, and in even greater
generality by Stojanovi\'c \cite{stojanovic}.  We then prove the following Chisholm-like
theorem for $\ell$-skew embeddings.

\begin{theorem}
  \label{theorem:Main-2}
  Let $\ell,d\geq 2$ be integers.  There is no $\ell$-skew embedding $\RR^d\to\RR^{N} $
  for \[N\leq 2^{\gamma(d)}(\ell-\alpha(\ell))+(d+1)\alpha(\ell)-2,\] where
  $\alpha(\ell)$ denotes the number of ones in the dyadic expansion of $l$ and
  $\gamma(d)=\lfloor\log_2d\rfloor+1$.
\end{theorem}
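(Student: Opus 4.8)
The plan is to reduce to the $k$-regular machinery of the previous section after linearising the affine tangent-space data by homogenisation. Suppose $f\colon\RR^d\to\RR^N$ is an $\ell$-skew embedding. At a point $x$ the tangent space $f(x)+\im Df(x)$ is a $d$-dimensional affine subspace of $\RR^N$; in $\RR^{N+1}=\RR\oplus\RR^N$ I replace it by the $(d+1)$-dimensional linear subspace
\[
\widehat T_x:=\bigl\langle (1,f(x)),\,(0,Df(x)e_1),\ldots,(0,Df(x)e_d)\bigr\rangle .
\]
Exactly as in Lemma~\ref{lemma:equivalence}, the affine tangent spaces at $x_1,\ldots,x_\ell$ are skew if and only if $\widehat T_{x_1},\ldots,\widehat T_{x_\ell}$ are linearly independent, i.e.\ their sum has dimension $\ell(d+1)$. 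Concatenating the displayed $(d+1)$-frames block by block yields a map $\widetilde f\colon F(\RR^d,\ell)\to V_{\ell(d+1)}(\RR^{N+1})$ which is $\Sym_\ell$-equivariant for the action permuting the $\ell$ blocks of $d+1$ columns, that is, for the block inclusion $\Sym_\ell\hookrightarrow\Sym_{\ell(d+1)}$.

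This block action is precisely the action of $\Sym_\ell$ on $\RR^\ell\otimes\RR^{d+1}$ permuting the $\RR^\ell$-factor, so the verbatim analogue of the Cohen--Handel correspondence (Lemma~\ref{lemma:equi}) shows that the existence of $\widetilde f$ is equivalent to the vector bundle $F(\RR^d,\ell)\times_{\Sym_\ell}(\RR^\ell\otimes\RR^{d+1})$ admitting an inverse of dimension $(N+1)-\ell(d+1)$. Because $\RR^\ell\otimes\RR^{d+1}\cong(\RR^\ell)^{\oplus(d+1)}$ as $\Sym_\ell$-representations, the Borel construction splits as a Whitney sum, so this bundle is $\xi_{\RR^d,\ell}^{\oplus(d+1)}$. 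By the dual Stiefel--Whitney criterion of Lemma~\ref{lemma:criterion}~\eqref{lemma:criterion:STW}, no such inverse can exist once $\overline w_{(N+1)-\ell(d+1)+1}(\xi_{\RR^d,\ell}^{\oplus(d+1)})\neq 0$; and since a skew embedding into a smaller Euclidean space includes into a larger one, it suffices to rule out the largest dimension $N$ in the asserted range. Substituting this $N$ and simplifying, the relevant degree becomes
\[
D:=(\ell-\alpha(\ell))\bigl(2^{\gamma(d)}-d-1\bigr),
\]
so the theorem reduces to the single assertion $\overline w_{D}(\xi_{\RR^d,\ell}^{\oplus(d+1)})\neq 0$.

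I would prove this in two steps, as with Theorem~\ref{theorem:Main-1.1}. First let $\ell=2^m$ and abbreviate $w_i:=w_i(\xi_{\RR^d,2^m})$. Since dual Stiefel--Whitney classes are universal polynomials in the $w_i$, the coefficient of the pure power $w_{2^m-1}^{\,2^{\gamma(d)}-d-1}$ in $\overline w(\xi_{\RR^d,\ell}^{\oplus(d+1)})=(1+w_1+\cdots+w_{2^m-1})^{-(d+1)}$ is computed by setting $w_1=\cdots=w_{2^m-2}=0$, yielding $\binom{2^{\gamma(d)}-1}{2^{\gamma(d)}-d-1}\bmod 2$. This is odd by Lucas' theorem, since $2^{\gamma(d)}-1$ is a string of ones in base two. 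Now multiply $\overline w_D$ by $w_{2^m-1}^{\,2d-2^{\gamma(d)}}$ (a nonnegative power, as $2^{\gamma(d)}\leq 2d$); the product lies in the top degree $(d-1)(2^m-1)$, where by~\eqref{eq:sw-class-4.1},~\eqref{eq:sw-class-4.6} and Corollary~\ref{cor:SW-monomials} every monomial except the pure power of $w_{2^m-1}$ vanishes. Hence the product equals $w_{2^m-1}^{\,d-1}$, which is nonzero by~\eqref{eq:sw-class-2}, and therefore $\overline w_D\neq 0$ whenever $\ell$ is a power of two.

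For general $\ell=2^{r_1}+\cdots+2^{r_a}$ with $a=\alpha(\ell)$ I would repeat the pullback argument of Lemma~\ref{lemma:dualSW-2}: disjoint $d$-balls produce a map $\theta$ with $\theta^*\xi_{\RR^d,\ell}\cong\prod_{t}\xi_{\RR^d,2^{r_t}}$, hence $\theta^*\bigl(\xi_{\RR^d,\ell}^{\oplus(d+1)}\bigr)\cong\prod_{t}\xi_{\RR^d,2^{r_t}}^{\oplus(d+1)}$, and the product and K\"unneth formulas express $\theta^*\overline w_D$ as a sum of external products $\overline w_{s_1}\times\cdots\times\overline w_{s_a}$ over $s_1+\cdots+s_a=D$. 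Choosing $s_t=(2^{r_t}-1)(2^{\gamma(d)}-d-1)$, whose sum is $D$ because $\sum_t(2^{r_t}-1)=\ell-a$, the corresponding term is nonzero by the power-of-two case and occupies its own K\"unneth summand, so it survives; thus $\overline w_D(\xi_{\RR^d,\ell}^{\oplus(d+1)})\neq 0$. The main obstacle is the power-of-two step: one must keep the \emph{formal} coefficient extraction (Lucas' theorem, which is exactly where the power of two $2^{\gamma(d)}$ enters through the all-ones word $2^{\gamma(d)}-1$) separate from the cohomological relations of Corollary~\ref{cor:SW-monomials}, which only become applicable after multiplying up to the top nonzero degree.
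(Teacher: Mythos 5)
Your proposal is correct and follows essentially the same route as the paper: it reduces the theorem to the non-vanishing of $\overline{w}_{(\ell-\alpha(\ell))(2^{\gamma(d)}-d-1)}\bigl((d+1)\,\xi_{\RR^d,\ell}\bigr)$ (the paper's Theorem~\ref{theorem:Main-2.1}), proves the power-of-two case by isolating the pure monomial $w_{\ell-1}^{2^{\gamma(d)}-d-1}$ via Lucas' theorem and killing all other monomials after multiplying up to top degree (the content of Corollary~\ref{cor:SW-monomials} together with~\eqref{eq:sw-class-4.1} and~\eqref{eq:sw-class-4.6}), and handles general $\ell$ by the same pullback-and-K\"unneth argument as Lemma~\ref{lemma:dualSW-2}. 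The only differences are cosmetic: the paper reaches the bundle $(d+1)\,\xi_{\RR^d,\ell}$ by directly constructing a fiberwise monomorphism $T(F(\RR^d,\ell)/\Sym_{\ell})\oplus\xi_{\RR^d,\ell}\to\RR^{N+1}$ (Lemma~\ref{lemma:TK-l-1}) rather than via a block Cohen--Handel correspondence, and it extracts the key coefficient as $\binom{2^{\gamma(d,\ell)}-d-1}{2^{\gamma(d)}-d-1}$ from $(1+u)^{2^{\gamma(d,\ell)}-d-1}$ instead of your $\binom{2^{\gamma(d)}-1}{2^{\gamma(d)}-d-1}$ from the formal inverse; both are odd by Lucas.
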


In the notation of the paper by Stojanovi\'c~\cite{stojanovic} the claim of
Theorem~\ref{theorem:Main-2} can be stated as the following lower bound
\[
N_{0,\ell}(M)\geq N_{0,\ell}(\RR^d)\geq 2^{\gamma(d)}(\ell-\alpha(\ell))+(d+1)\alpha(\ell)-1,
\]
where $M$ is any $d$-manifold.  This bound, as illustrated in the table below, is a
considerable improvement of the bound $N_{0,\ell}(\RR^d)\geq (d+1)\ell-1$ obtained
in~\cite[Remark~2.3]{stojanovic}.
\medskip
\begin{center}
\begin{tabular}{| c | l | c | c | c | c | c | c | c |}
    \hline
 $\ell$ & Bounds                     & $d=2$ & $d=3$ & $d=4$ & $d=5$ & $d=6$ & $d=7$ & $d=8$  \\ \hline
     3  & $2^{\gamma(d)}+2d+1$       &  $9$  &  $11$ & $17$  & $19$  & $21$  & $23$  & $33$   \\
        & $3(d+1)-1$                 &  $8$  &  $11$ & $14$  & $17$  & $20$  & $23$  & $26$   \\ \hline
     4  & $3\cdot2^{\gamma(d)}+d$    &  $14$ &  $15$ & $28$  & $29$  & $30$  & $31$  & $56$   \\
        & $4(d+1)-1$                 &  $11$ &  $15$ & $19$  & $23$  & $27$  & $31$  & $35$   \\ \hline
     5  & $3\cdot2^{\gamma(d)}+2d+1$ &  $17$ &  $19$ & $33$  & $35$  & $37$  & $39$  & $65$   \\
        & $5(d+1)-1$                 &  $14$ &  $19$ & $24$  & $29$  & $24$  & $39$  & $44$   \\ \hline
        \end{tabular}
\end{center}
\medskip
Moreover, it improves even the bound $N_{0,\ell}(M)\geq
(d+1)\ell$,~\cite[Theorem~3.2]{stojanovic}, given for any closed manifold $M$, in all the
cases except when $d=2^r-1$ for some $r\geq 2$.

For $\ell=2$ the lower bound, in the notation of Ghomi \& Tabachnikov~\cite{ghomi-tabachnikov}, becomes
\[
N(d)\geq d+2^{\gamma(d)}.
\]
This bound was not explicitly given in~\cite[Theorem~1.4 and Corollary~1.5]{ghomi-tabachnikov} but could have been derived.
See Case~\ref{subsec:case_2} in our proof of Theorem~\ref{theorem:Main-2.1}.

\subsection{Definition and the first bound}

The affine subspaces $L_1,\ldots,L_{\ell}$ of the Euclidean space $\RR^N$ are
\emph{affinely independent} if the affine span of their union is an affine space of affine dimension 
$(\dimaff L_1+1)+\cdots+(\dimaff L_{\ell}+1)-1$.  Notice that any two lines
in $\RR^3$ are skew if and only if they are affinely independent.

For a $d$-dimensional manifold $M$ we denote by $TM$ the tangent bundle of $M$ and by
$T_yM$ the tangent space of $M$ at the point $y\in M$.

\begin{definition}[Skew embedding]\label{def:skew_map}
  Let $\ell\geq 1$ be an integer, and $M$ be a smooth $d$-dimensional manifold.  A smooth embedding
  $f \colon M\to\RR^N$ is an \emph{$\ell$-skew embedding} if for every $(y_1,\ldots,y_{\ell})\in F(M,\ell)$
  the affine subspaces
  \[
  (\iota\circ df_{y_1})(T_{y_1}M),\ldots ,(\iota\circ df_{y_{\ell}})(T_{y_{\ell}}M)
  \]
  of $\RR^N$ are affinely independent.
\end{definition}

\noindent
Here $df\colon TM\to T\RR^N$ denotes the differential map between tangent vector bundles
induced by $f$, and 
\begin{equation}
\iota \colon T\RR^N \to \RR^N
\label{iota}
\end{equation}
sends a tangent vector $v \in T_x\RR^N$ for $x \in \RR^N$
to  $x +v$ where we use the standard identification $T_x\RR^N = \RR^N$.


Now, directly from the definition we get the following lower bound for the existence of an
$\ell$-skew embedding.

\begin{lemma}
Let $\ell\geq 1$ be an integer, $M$ be a smooth $d$-dimensional manifold, and $f \colon M\to\RR^N$ be an $\ell$-skew embedding.
Then $(d+1)\ell-1\leq N$.
\end{lemma}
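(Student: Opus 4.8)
The plan is to obtain the inequality directly from the definition of affine independence, evaluated at a single configuration of points; no equivariant or bundle-theoretic machinery is needed. First I would fix an arbitrary tuple $(y_1,\ldots,y_\ell)\in F(M,\ell)$. Such a tuple exists because $F(M,\ell)$ is nonempty as soon as $M$ has at least $\ell$ points, which is automatic for a manifold of positive dimension (here $d\geq 2$). For each index $i$ I set $L_i:=(\iota\circ df_{y_i})(T_{y_i}M)$, the affine subspace of $\RR^N$ attached to the $i$-th tangent space by the embedding $f$.

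The one computation to record is that $\dimaff L_i=d$ for every $i$. Since $f$ is a smooth embedding it is in particular an immersion, so the differential $df_{y_i}\colon T_{y_i}M\to T_{f(y_i)}\RR^N$ is injective; hence $df_{y_i}(T_{y_i}M)$ is a $d$-dimensional linear subspace. Composing with the fibrewise affine isomorphism $\iota$ of \eqref{iota} then yields the $d$-dimensional affine subspace $L_i=f(y_i)+df_{y_i}(T_{y_i}M)$.

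Finally I would invoke the $\ell$-skew property at the chosen configuration: the subspaces $L_1,\ldots,L_\ell$ are affinely independent, which by definition means that the affine span of their union has affine dimension
\[
(\dimaff L_1+1)+\cdots+(\dimaff L_\ell+1)-1=(d+1)\ell-1.
\]
Because this affine span sits inside $\RR^N$, whose affine dimension is $N$, I conclude $(d+1)\ell-1\leq N$. There is no genuine obstacle here: the entire content is the bookkeeping that the tangent images are $d$-dimensional (immersivity of $f$) and that at least one configuration exists. I note that one could instead mimic the proof of Theorem~\ref{the:BRS}, constructing an injective map out of a product of small configuration neighbourhoods and comparing dimensions, but this detour is unnecessary, since the required affine-dimension count is already built into the definition of an $\ell$-skew embedding.
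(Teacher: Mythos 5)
Your proposal is correct and is essentially the paper's own proof: the paper likewise evaluates the $\ell$-skew condition at a single configuration $(y_1,\ldots,y_\ell)\in F(M,\ell)$ and reads off $(d+1)\ell-1=\dimaff\bigl(\bigcup_i(\iota\circ df_{y_i})(T_{y_i}M)\bigr)\leq N$ directly from the definition of affine independence. The extra details you supply (immersivity giving $\dimaff L_i=d$, nonemptiness of $F(M,\ell)$) are left implicit in the paper but are the same argument.
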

\begin{proof}
Since $f$ is an $\ell$-skew embedding, then for any $(y_1,\ldots,y_{\ell})\in F(M,\ell)$ the following inequality has to hold
\[
 (d+1)\ell-1=\dimaff\mathrm{span}\big\{(\iota\circ df_{y_1})(T_{y_1}M)\cup\cdots\cup (\iota\circ df_{y_{\ell}})(T_{y_{\ell}}M)\big\}\leq N.\vspace{-6mm}
\]
\end{proof}

\subsection{A topological criterion}

Now similarly to the case of $k$-regular maps, we derive a topological criterion for
the existence of an $\ell$-skew embedding.  Let $M$ be a smooth $d$-dimensional manifold, $TM$ be its
tangent bundle, and $\ell\geq 1$ be an integer.

The tangent manifold $TF(M,\ell)$ over the configuration space $F(M,\ell)\subseteq
M^{\ell}$ is the restriction of the direct sum of the pullback bundles
\[
TF(M,\ell)\cong\Big(\pi_1^* (TM)\oplus\cdots\oplus\pi_{\ell}^* (TM)\Big)\Big|_{F(M,\ell)},
\]
where $\pi_i\colon M^{\ell}\to M$ denotes the projection on the $i$th coordinate.

The symmetric group $\Sym_{\ell}$ acts naturally on the configuration space $F(M,\ell)$
and consequently on the tangent bundle $TF(M,\ell)$.  Since the action is free the
quotient space $TF(M,\ell)/\Sym_{\ell}$ can be identified with the tangent bundle of the
unordered configuration space $F(M,\ell)/\Sym_{\ell}$, i.e.,
$T(F(M,\ell)/\Sym_{\ell})\cong TF(M,\ell)/\Sym_{\ell}$.  For example, it is obvious that
\begin{equation}
\label{eq:TMofConfSpace}
T(F(\RR^d,\ell)/\Sym_{\ell})\cong d\,\xi_{\RR^d,\ell}.
\end{equation}

The first ingredient of our topological criterion is the existence of the following
fiberwise linear monomorphism.

\begin{lemma}
  \label{lemma:TK-l-1}
  Let $\ell\geq 1$ be an integer, and $M$ be a smooth $d$-dimensional manifold.
  If there exists an $\ell$-skew embedding $M\to\RR^N$, then the $(d+1)\ell$-dimensional vector
  bundle $T(F(M,\ell)/\Sym_{\ell})\oplus\xi_{M,\ell}$ over the unordered configuration space
  $F(M,\ell)/\Sym_{\ell}$ admits an $(N-(d+1)\ell+1)$-dimensional inverse.
\end{lemma}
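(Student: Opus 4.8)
The plan is to build, straight out of the $\ell$-skew embedding $f\colon M\to\RR^N$, a fiberwise linear monomorphism of the $(d+1)\ell$-dimensional bundle $T(F(M,\ell)/\Sym_{\ell})\oplus\xi_{M,\ell}$ into the trivial bundle $F(M,\ell)/\Sym_{\ell}\times\RR^{N+1}$, and then to take its cokernel exactly as was done with $\operatorname{coker}(h\colon\nu\to\theta)$ in the proof of Lemma~\ref{lemma:equi}. A fiberwise injection into a trivial $(N+1)$-bundle produces an inverse of dimension $(N+1)-(d+1)\ell=N-(d+1)\ell+1$, which is precisely the asserted dimension, so this single construction proves the lemma.

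First I would set up the homogenization dictionary that converts the affine condition of Definition~\ref{def:skew_map} into a purely linear one. Embedding $\RR^N$ into $\RR^{N+1}=\RR\times\RR^N$ via $p\mapsto(1,p)$, an affine $d$-plane $L=p+V\subseteq\RR^N$ corresponds to the $(d+1)$-dimensional linear subspace $\widehat L:=\RR\cdot(1,p)\oplus(\{0\}\times V)$ of $\RR^{N+1}$. A routine computation shows that affine subspaces $L_1,\ldots,L_{\ell}$ are affinely independent in the sense used in the paper if and only if their homogenizations are in direct-sum position, i.e.\ $\sum_i w_i=0$ with $w_i\in\widehat{L_i}$ forces every $w_i=0$. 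Applying this with $L_i=(\iota\circ df_{y_i})(T_{y_i}M)=f(y_i)+df_{y_i}(T_{y_i}M)$ turns the $\ell$-skew hypothesis into: for each $\underline y\in F(M,\ell)$ the subspaces $\widehat{L_1},\ldots,\widehat{L_{\ell}}$ of $\RR^{N+1}$ are linearly independent.

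Next I would construct the map over the ordered configuration space. Regrouping the decomposition of the excerpt as $TF(M,\ell)\oplus\big(F(M,\ell)\times\RR^{\ell}\big)\cong\bigoplus_{i=1}^{\ell}\big(\pi_i^*(TM)\oplus\underline\RR\big)$, I define on the $i$-th summand the fiberwise linear map $(v,t)\mapsto\big(t,\,t\,f(y_i)+df_{y_i}(v)\big)\in\RR^{N+1}$; its image over $\underline y$ is exactly $\widehat{L_i}$, and it is injective because $f$ is an immersion. Summing over $i$ yields a bundle map $\Phi$ into the trivial bundle $F(M,\ell)\times\RR^{N+1}$, and by the previous paragraph $\Phi$ is fiberwise injective precisely when the $\widehat{L_i}$ are independent, that is, precisely because $f$ is $\ell$-skew. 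Moreover $\Phi$ is $\Sym_{\ell}$-equivariant for the action permuting the $\ell$ summands on the source and acting trivially on the target, since permuting the points only reindexes the sum.

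Finally I would descend and pass to cokernels. As the $\Sym_{\ell}$-action on $F(M,\ell)$ is free, taking quotients identifies the source with $T(F(M,\ell)/\Sym_{\ell})\oplus\xi_{M,\ell}$ and the target with $F(M,\ell)/\Sym_{\ell}\times\RR^{N+1}$, so $\Phi$ induces a fiberwise linear monomorphism $\overline\Phi$ over the unordered configuration space. By \cite[Corollary~8.3, page~36]{husemoller}, just as in Lemma~\ref{lemma:equi}, $\eta:=\operatorname{coker}\overline\Phi$ is a vector bundle, and a fiber metric splits $F(M,\ell)/\Sym_{\ell}\times\RR^{N+1}\cong\big(T(F(M,\ell)/\Sym_{\ell})\oplus\xi_{M,\ell}\big)\oplus\eta$, exhibiting $\eta$ as the desired $(N-(d+1)\ell+1)$-dimensional inverse. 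The only genuinely non-routine point is the homogenization dictionary together with the verification that the $i$-th fiber map has image exactly $\widehat{L_i}$; once that is in place, the equivariance check and the cokernel argument are bookkeeping modeled on Lemma~\ref{lemma:equi}.
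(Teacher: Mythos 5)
Your proposal is correct and follows essentially the same route as the paper: both homogenize into $\RR^{N+1}$ via $p\mapsto(p,1)$ and $v\mapsto(v,0)$, use the $\ell$-skew condition to see that the resulting bundle map from $T(F(M,\ell)/\Sym_{\ell})\oplus\xi_{M,\ell}$ into the trivial $(N+1)$-bundle is a fiberwise monomorphism, and obtain the inverse as a complement/cokernel. If anything your version is slightly cleaner, since your fiber map $(v,t)\mapsto\bigl(t,\,t\,f(y_i)+df_{y_i}(v)\bigr)$ is honestly linear, whereas the paper's formula uses $b\circ\iota\circ df_{y_i}(v_i)=\bigl(f(y_i)+df_{y_i}(v_i),0\bigr)$, which is affine in $v_i$ and has to be read with the translation by $f(y_i)$ absorbed into the $\lambda_i$-term.
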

\begin{proof}
Let us introduce the following two embeddings $a\colon\RR^N\to\RR^{N+1}$ and $b\colon\RR^N\to\RR^{N+1}$ defined by
\[
 a(t_1,\ldots,t_N)=(t_1,\ldots,t_N,1)\quad \text{and} \quad b(t_1,\ldots,t_N)=(t_1,\ldots,t_N,0).
\]
Now assume that $f\colon M\to\RR^N$ is an $\ell$-skew embedding.
Next we define a morphism of vector bundles covering the identity on the base space $F(M,\ell)/\Sym_{\ell}$
\[
\omega \colon T(F(M,\ell)/\Sym_{\ell})\oplus\xi_{M,\ell}\longrightarrow  F(M,\ell)/\Sym_{\ell}\times\RR^{N+1},
\]
as follows.
Let $p \colon F(M,\ell) \to F(M,\ell)/\Sym_{\ell}$ be the projection.
For $\underline{y} \in F(M,\ell)$ 
the linear map \[\omega_{p(\underline{y})} \colon T_{p(\underline{y})} (F(M,\ell)/\Sym_{\ell})\oplus (\xi_{M,\ell})_{p(\underline{y})}  \to \RR^{N+1}\]
is given by the formula 
\[
\bigl((\underline{y},\underline{v}), (\underline{y},\underline{\lambda})\bigr)
\mapsto \lambda_1 \cdot  (a\circ f(y_1)) + b\circ df_{y_1}(v_1)+
\cdots+\lambda_{\ell} \cdot (a\circ f(y_{\ell})) +b\circ df_{y_{\ell}}(v_{\ell}).
\]
where $\underline{v} \in TF(M,\ell)_{\underline{y}} \cong T_{y_1}M \oplus\cdots\oplus T_{y_{\ell}} M$
and $\underline{\lambda} \in \RR^{\ell}$.
Since $f$ is an $\ell$-skew embedding, this $\omega$ is a linear monomorphism on each fiber.
Hence the vector bundle $T(F(M,\ell)/\Sym_{\ell})\oplus\xi_{M,\ell}$ admits an $(N-(d+1)\ell+1)$-dimensional inverse.
\end{proof}

As a direct consequence of the Lemma~\ref{lemma:TK-l-1}, we get the following criterion for the
non-existence of an $\ell$-skew embedding $M\to\RR^N$.

\begin{lemma}
  \label{lemma:TK-l-2}
  Let $d,\ell\geq 1$ be integers and let $M$ be a smooth $d$-dimensional manifold.  
  If the dual Stiefel--Whitney  class 
  \[
   \overline{w}_{N - (d+1)\ell +2}\bigl(T(F(M,\ell)/\Sym_{\ell})\oplus\xi_{M,\ell}\bigr)
   \] 
   does not  vanish, then there is no $\ell$-skew embedding $M\to\RR^{N}$.
\end{lemma}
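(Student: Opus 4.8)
The plan is to reduce the statement to the obstruction-theoretic criterion already in place, in exact parallel with the passage from Lemma~\ref{lemma:criterion}~\eqref{lemma:criterion:inverse} to Lemma~\ref{lemma:criterion}~\eqref{lemma:criterion:STW} used for $k$-regular maps. Abbreviate the $(d+1)\ell$-dimensional bundle by $\beta:=T(F(M,\ell)/\Sym_{\ell})\oplus\xi_{M,\ell}$ and set $r:=N-(d+1)\ell+1$. I would argue by contraposition: assume an $\ell$-skew embedding $M\to\RR^N$ exists, and derive that the dual class $\overline{w}_{N-(d+1)\ell+2}(\beta)$ must vanish.

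First I would invoke Lemma~\ref{lemma:TK-l-1}, which says that such an embedding forces $\beta$ to admit an $r$-dimensional inverse; that is, there is a rank-$r$ bundle $\eta$ over $F(M,\ell)/\Sym_{\ell}$ with $\beta\oplus\eta$ trivial, in the same sense of ``inverse'' used earlier in Lemma~\ref{lemma:equi}.

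Next I would apply the Whitney product formula to the trivial bundle $\beta\oplus\eta$. Since $w(\beta\oplus\eta)=w(\beta)w(\eta)=1$, the total class $w(\eta)$ is exactly the total dual class $\overline{w}(\beta)=w(\beta)^{-1}$. Because $\eta$ has rank $r$, its Stiefel--Whitney classes vanish above degree $r$, so $\overline{w}_i(\beta)=w_i(\eta)=0$ for every $i>r$. In particular, taking $i=r+1=N-(d+1)\ell+2$ gives $\overline{w}_{N-(d+1)\ell+2}(\beta)=0$, which is the contrapositive of the desired statement.

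The argument is essentially formal once Lemma~\ref{lemma:TK-l-1} is granted; there is no real obstacle beyond careful dimension bookkeeping, the one point to verify being that the inverse produced by Lemma~\ref{lemma:TK-l-1} has rank precisely $r=N-(d+1)\ell+1$, so that the first degree beyond the reach of $\eta$, in which the dual class of $\beta$ is no longer constrained to vanish, is exactly $N-(d+1)\ell+2$.
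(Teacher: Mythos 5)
Your proposal is correct and follows exactly the route the paper intends: the paper states Lemma~\ref{lemma:TK-l-2} as a ``direct consequence'' of Lemma~\ref{lemma:TK-l-1}, with the implicit argument being precisely the contrapositive via the Whitney product formula, namely that an $(N-(d+1)\ell+1)$-dimensional inverse $\eta$ of $\beta$ forces $\overline{w}_i(\beta)=w_i(\eta)=0$ for $i>N-(d+1)\ell+1$. Your dimension bookkeeping is also right, so there is nothing to add.
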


In the case when $M$ is the Euclidean space $\RR^d$ the relation~\eqref{eq:TMofConfSpace}
implies the following criterion.

\begin{lemma} \label{lem:dual:Whitney_skew}
  Let $d,\ell\geq 1$ be integers.  Suppose that the dual Stiefel--Whitney class
  $\overline{w}_{N - (d+1)\ell +2}((d+1)\,\xi_{\RR^d,\ell})$ does not vanish.
  Then there is no  $\ell$-skew embedding $\RR^d\to\RR^N$.
\end{lemma}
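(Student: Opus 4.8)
The plan is to obtain this lemma as an immediate specialization of Lemma~\ref{lemma:TK-l-2} to the case $M=\RR^d$. Lemma~\ref{lemma:TK-l-2} already reduces the non-existence of an $\ell$-skew embedding $M\to\RR^N$ to the non-vanishing of a single dual Stiefel--Whitney class, namely that of the $(d+1)\ell$-dimensional bundle $T(F(M,\ell)/\Sym_{\ell})\oplus\xi_{M,\ell}$ in degree $N-(d+1)\ell+2$. Hence the only remaining task is to identify this bundle explicitly when $M=\RR^d$.

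First I would invoke the tangent-bundle computation~\eqref{eq:TMofConfSpace}, which gives the isomorphism $T(F(\RR^d,\ell)/\Sym_{\ell})\cong d\,\xi_{\RR^d,\ell}$. Adding the summand $\xi_{\RR^d,\ell}$ to both sides yields
\[
T(F(\RR^d,\ell)/\Sym_{\ell})\oplus\xi_{\RR^d,\ell}
\;\cong\; d\,\xi_{\RR^d,\ell}\oplus\xi_{\RR^d,\ell}
\;=\;(d+1)\,\xi_{\RR^d,\ell}.
\]
Since Stiefel--Whitney classes, and therefore their formal inverses $\overline{w}$, are invariant under isomorphism of vector bundles, the dual class occurring in Lemma~\ref{lemma:TK-l-2} for $M=\RR^d$ coincides with $\overline{w}_{N-(d+1)\ell+2}\bigl((d+1)\,\xi_{\RR^d,\ell}\bigr)$, which is exactly the class appearing in the hypothesis.

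Feeding this hypothesis into Lemma~\ref{lemma:TK-l-2} then yields the non-existence of an $\ell$-skew embedding $\RR^d\to\RR^N$, which is the assertion. I do not expect any genuine obstacle in this step: the entire content lies in the preceding Lemmas~\ref{lemma:TK-l-1} and~\ref{lemma:TK-l-2}, where the fiberwise monomorphism $\omega$ is constructed and the dual-class criterion is derived, together with the computation~\eqref{eq:TMofConfSpace}. The one point worth keeping in mind is that~\eqref{eq:TMofConfSpace} is an isomorphism of vector bundles (not merely a fiberwise dimension count), so that characteristic classes transport faithfully; this is precisely what that identification provides, and the lemma follows by substitution.
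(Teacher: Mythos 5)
Your proposal is correct and follows exactly the paper's route: the paper likewise derives this lemma by specializing Lemma~\ref{lemma:TK-l-2} to $M=\RR^d$ and using the identification~\eqref{eq:TMofConfSpace} to rewrite $T(F(\RR^d,\ell)/\Sym_{\ell})\oplus\xi_{\RR^d,\ell}$ as $(d+1)\,\xi_{\RR^d,\ell}$. Nothing is missing.
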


Thus Theorem~\ref{theorem:Main-2} is a consequence of Lemma~\ref{lem:dual:Whitney_skew} and the
following Theorem~\ref{theorem:Main-2.1}

\begin{theorem}
\label{theorem:Main-2.1}
Let $d,\ell\geq 1$ be integers.
Then the  dual Stiefel--Whitney class $\overline{w}_{(2^{\gamma(d)}-d-1)(\ell-\alpha(\ell))}\bigl((d+1)\,\xi_{\RR^d,\ell}\bigr)$
does not vanish.
\end{theorem}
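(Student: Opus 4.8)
The plan is to follow the two-step pattern of the proof of Theorem~\ref{theorem:Main-1.1}: first establish the statement when $\ell$ is a power of $2$, and then deduce the general case by the same product-and-restriction argument used in Lemma~\ref{lemma:dualSW-2}. Throughout write $n:=2^{\gamma(d)}$, so that $n$ is the smallest power of $2$ strictly larger than $d$; in particular $d<n\le 2d$, hence $0\le n-d-1\le d-1$. We may assume $d\ge 2$ and $\ell\ge 2$, the cases $d=1$ or $\ell=1$ being trivial since then $(n-d-1)(\ell-\alpha(\ell))=0$ and $\overline{w}_0=1$.

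For the reduction to prime powers, suppose the statement holds whenever $\ell$ is a power of $2$. Writing $\ell=2^{r_1}+\cdots+2^{r_a}$ with $a=\alpha(\ell)$, I would use exactly the pullback square and the isomorphism \eqref{iso_Theta_xi_is_prod} of Lemma~\ref{lemma:dualSW-2}, now applied to $(d+1)\xi_{\RR^d,\ell}$: pulling back along $\theta$ gives $\theta^*\bigl((d+1)\xi_{\RR^d,\ell}\bigr)\cong\prod_{t=1}^{a}(d+1)\xi_{\RR^d,2^{r_t}}$. By naturality, the product formula for dual Stiefel--Whitney classes, and the K\"unneth theorem, $\theta^*\overline{w}_{(n-d-1)(\ell-\alpha(\ell))}\bigl((d+1)\xi_{\RR^d,\ell}\bigr)$ decomposes as a sum of cross products $\overline{w}_{s_1}\bigl((d+1)\xi_{\RR^d,2^{r_1}}\bigr)\times\cdots\times\overline{w}_{s_a}\bigl((d+1)\xi_{\RR^d,2^{r_a}}\bigr)$ over $s_1+\cdots+s_a=(n-d-1)(\ell-\alpha(\ell))$, each lying in a distinct K\"unneth summand. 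Choosing $s_t=(n-d-1)(2^{r_t}-1)$, whose sum is indeed $(n-d-1)\sum_t(2^{r_t}-1)=(n-d-1)(\ell-a)$, the corresponding factors are nonzero by the power-of-$2$ case, so that single summand is nonzero, whence $\overline{w}_{(n-d-1)(\ell-\alpha(\ell))}\bigl((d+1)\xi_{\RR^d,\ell}\bigr)\ne 0$.

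It remains to treat $\ell=2^m$, where the target degree is $(n-d-1)(\ell-1)$. Abbreviating $w_i:=w_i(\xi_{\RR^d,\ell})$ and using $\overline{w}\bigl((d+1)\xi_{\RR^d,\ell}\bigr)=\overline{w}(\xi_{\RR^d,\ell})^{d+1}$, I would expand the degree-$(n-d-1)(\ell-1)$ part as a sum of monomials $w_1^{j_{r,1}}\cdots w_{\ell-1}^{j_{r,\ell-1}}$, each of weighted degree $(\ell-1)j$ with $j:=n-d-1\in\{0,\ldots,d-1\}$. Multiplying this homogeneous class by $w_{\ell-1}^{\,d-1-j}=w_{\ell-1}^{\,2d-n}$ lands it in the top degree $(d-1)(\ell-1)$; by \eqref{eq:sw-class-4.1} and \eqref{eq:sw-class-4.6}, equivalently by Corollary~\ref{cor:SW-monomials}, every such monomial times $w_{\ell-1}^{\,d-1-j}$ vanishes except for the pure power, which yields $w_{\ell-1}^{\,j}\cdot w_{\ell-1}^{\,d-1-j}=w_{\ell-1}^{\,d-1}$. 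Consequently $\overline{w}_{(n-d-1)(\ell-1)}\bigl((d+1)\xi_{\RR^d,\ell}\bigr)\cdot w_{\ell-1}^{\,2d-n}=c\,w_{\ell-1}^{\,d-1}$, where $c\in\FF_2$ is the coefficient of the monomial $w_{\ell-1}^{\,n-d-1}$ in the formal power series $\overline{w}(\xi_{\RR^d,\ell})^{d+1}$.

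The coefficient $c$ is computed by setting $w_1=\cdots=w_{\ell-2}=0$, so that $\overline{w}(\xi_{\RR^d,\ell})^{d+1}$ specializes to $(1+w_{\ell-1})^{-(d+1)}=\sum_{j\ge 0}\binom{d+j}{j}w_{\ell-1}^{\,j}$; thus $c\equiv\binom{d+(n-d-1)}{n-d-1}=\binom{n-1}{d}\pmod 2$. Since $n-1=2^{\gamma(d)}-1$ has all of its first $\gamma(d)$ binary digits equal to $1$ and $d<n$, Lucas's theorem gives $\binom{n-1}{d}\equiv 1\pmod 2$, i.e. $c=1$. Hence $\overline{w}_{(n-d-1)(\ell-1)}\bigl((d+1)\xi_{\RR^d,\ell}\bigr)\cdot w_{\ell-1}^{\,2d-n}=w_{\ell-1}^{\,d-1}$, which is nonzero by \eqref{eq:sw-class-2}, forcing $\overline{w}_{(n-d-1)(\ell-1)}\bigl((d+1)\xi_{\RR^d,\ell}\bigr)\ne 0$ and completing the prime-power case. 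The main point---and the only place where the particular value $2^{\gamma(d)}$ enters---is the oddness of $\binom{n-1}{d}$: this is exactly the condition that $n-1$ dominates $d$ digitwise, and $2^{\gamma(d)}-1$ is the smallest integer with this property, which is what pins down the sharp degree. The remaining work, namely the K\"unneth bookkeeping and the extraction of the formal power-series coefficient, is routine once Corollary~\ref{cor:SW-monomials} is in hand.
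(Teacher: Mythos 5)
Your proposal is correct and follows essentially the same route as the paper: reduce to $\ell=2^m$ via the pullback along $\theta$ and the K\"unneth decomposition, then isolate the pure power $w_{\ell-1}^{2^{\gamma(d)}-d-1}$ by multiplying into top degree $(d-1)(\ell-1)$ and invoking Corollary~\ref{cor:SW-monomials}. The only real difference is the coefficient extraction: the paper first rewrites $\overline{w}\bigl((d+1)\,\xi_{\RR^d,\ell}\bigr)$ as the polynomial $(1+u)^{2^{\gamma(d,\ell)}-d-1}$ using the vanishing of $H^i(F(\RR^d,\ell)/\Sym_\ell)$ for $i>(d-1)(\ell-1)$ and reads off the multinomial coefficient $\binom{2^{\gamma(d,\ell)}-d-1}{2^{\gamma(d)}-d-1}$, whereas you read the coefficient $\binom{2^{\gamma(d)}-1}{d}$ directly off the negative binomial series $(1+u)^{-(d+1)}$ --- both equal $1$ by Lucas, and your version has the small advantage of absorbing the paper's separate special cases $d=2$ and $\ell=2$ into the uniform argument.
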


\subsection{Proof of Theorem~\ref{theorem:Main-2.1}}

The proof of Theorem~\ref{theorem:Main-2.1} will be done in four steps, 
for increasing generality of parameters $d\geq 2$ and $\ell\geq 2$.
(For $d=1$ or $\ell=1$ the result is trivially true.)

\subsubsection{The special case $d=2$ and $\ell\geq 2$}
\label{subsec:case_1}

In this case we use the fact that $2\xi_{\RR^2,\ell}$ is a trivial vector
bundle~\cite[Theorem~1]{cohen-mahowald-milgram}.  Thus
\[
 \overline{w}_{(2^{\gamma(d)}-d-1)(\ell-\alpha(\ell))}((d+1)\,\xi_{\RR^d,\ell})
=\overline{w}_{\ell-\alpha(\ell)}(3\,\xi_{\RR^2,\ell})=\overline{w}_{\ell-\alpha(\ell)}(\xi_{\RR^2,\ell})\neq 0,
\]
by Lemma~\ref{lemma:dualSW-2}.

\subsubsection{The special case $d\geq 2$ and $\ell=2$}
\label{subsec:case_2}

In this case the base space of the vector bundle $\xi_{\RR^d,\ell}=\xi_{\RR^d,2}$ is homotopy equivalent to a
projective space, $F(\RR^d,2)/\Sym_2\simeq\RP^{d-1}$.  
More precisely, consider the inclusion 
$\RP^{d-1}\to F(\RR^d,2)/\Sym_2$ induced by another inclusion $S^{d-1}\to F(\RR^d,2)$ defined by $x\mapsto (x,-x)$.
Then the vector bundle $\xi_{\RR^d,2}$ over $F(\RR^d,2)/\Sym_2$ pulls back to the vector bundle isomorphic to
the direct sum of the tautological bundle and trivial line bundle over the projective space
$\RP^{d-1}$.  Thus, if $H^*(\RP^{d-1},\FF_2)\cong H^*( F(\RR^d,2)/\Sym_2,\FF_2)=\FF_2[w_1]/\langle w_1^d\rangle$ where
$\deg(w_1)=1$, then $w(\xi_{\RR^d,2})=1+w_1$. 
Consequently,
\[
 w((d+1)\,\xi_{\RR^d,2})=w(\xi_{\RR^d,2})^{d+1}=(1+w_1)^{d+1}.
\]
Since $2^{\gamma(d)}$ is a power of two and $2^{\gamma(d)}\geq d$ we have that
\begin{eqnarray*}
 w((d+1)\,\xi_{\RR^d,2})(1+w_1)^{2^{\gamma(d)}-d-1}
& = &
(1+w_1)^{d+1}(1+w_1)^{2^{\gamma(d)}-d-1}
\\
& = &
(1+w_1)^{2^{\gamma(d)}}=1+w_1^{2^{\gamma(d)}}
\\
& =& 
1,
\end{eqnarray*}
and therefore
\[
\overline{w}((d+1)\,\xi_{\RR^d,2})=(1+w_1)^{2^{\gamma(d)}-d-1}=1+\cdots+w_1^{2^{\gamma(d)}-d-1}.
\]
The inequality $2^{\gamma(d)}\leq 2d$ implies that $2^{\gamma(d)}-d-1<d$ and so
\[
 \overline{w}_{(2^{\gamma(d)}-d-1)(l-\alpha(l))}((d+1)\,\xi_{\RR^d,l})
=\overline{w}_{2^{\gamma(d)}-d-1}((d+1)\,\xi_{\RR^d,2})=w_1^{2^{\gamma(d)}-d-1}\neq 0.
\]
(This sort of argument is quite classical, see e.g.~\cite[Proposition 3.4, page 260]{howard}.)
\subsubsection{The special case $d\geq 2$ and $\ell=2^m$ for $m\geq 1$}
\label{subsubsec:1}
From the equation \eqref{eq:sw-class-1} we have that
\[
 w(\xi_{\RR^d,\ell})=1+w_1+\cdots+w_{\ell-1}=1+u,
\]
where $w_i:=w_i(\xi_{\RR^d,\ell})$, and $u:=w_1+\cdots+w_{\ell-1}$.
Then
\[
  w((d+1)\,\xi_{\RR^d,\ell})=w(\xi_{\RR^d,\ell})^{d+1}
=(1+w_1+\cdots+w_{\ell-1})^{d+1}=(1+u)^{d+1}.
\]
Let $\gamma(d,\ell):=\lfloor\log_2(d-1)+\log_2(\ell-1)\rfloor+1$ be 
the smallest power of $2$ that exceeds $(d-1)(\ell-1)$. Thus,
\begin{eqnarray*}
w((d+1)\,\xi_{\RR^d,\ell})(1+u)^{2^{\gamma(d,\ell)}-d-1}
& = &
(1+u)^{d+1}(1+u)^{2^{\gamma(d,\ell)}-d-1}
\\
& = &
(1+u)^{2^{\gamma(d,\ell)}}
\\
& = & 
1+u^{2^{\gamma(d,\ell)}}\\
& = & 
1+w_1^{2^{\gamma(d,\ell)}}+\cdots+w_{\ell-1}^{2^{\gamma(d,\ell)}}
\\
& = & 
1,
\end{eqnarray*}
since $2^{\gamma(d,\ell)}>(d-1)(\ell-1)$, and $H^i(F(\RR^d,\ell)/\Sym_{\ell})=0$ for all $i>(d-1)(\ell-1)$.
Therefore,
\begin{eqnarray*}
\overline{w}((d+1)\,\xi_{\RR^d,\ell}) &=&(1+u)^{2^{\gamma(d,\ell)}-d-1}\\
                                      &=&(1+w_1+\cdots+w_{\ell-1})^{2^{\gamma(d,\ell)}-d-1}\\
                                      &=&
 \sum_{\substack{k_0,k_1\ldots ,k_{\ell-1}\geq 0  \\ %
                 k_0+k_1+\cdots +k_{\ell-1}=2^{\gamma(d,\ell)}-d-1}}
 \binom{2^{\gamma(d,\ell)}-d-1}{k_0,\ k_1,\ \ldots, \ k_{\ell-1}}\, 
 w_1^{k_1} \cdots  w_{\ell-1}^{k_{\ell-1}}.
\end{eqnarray*}
The $(\ell-1)(2^{\gamma(d)}-d-1)$ component of this total dual Stiefel--Whitney class can be expressed in the following way:
\begin{multline*}
\overline{w}_{(\ell-1)(2^{\gamma(d)}-d-1)}((d+1)\,\xi_{\RR^d,\ell})\\
=
 \sum_{\substack{k_0,k_1\ldots ,k_{\ell-1}\geq 0  \\ %
                 k_0+k_1+\cdots +k_{\ell-1}=2^{\gamma(d,\ell)}-d-1\\
                 k_1+2k_2+\cdots +(\ell-1)k_{\ell-1}=(\ell-1)(2^{\gamma(d)}-d-1)}}
 \binom{2^{\gamma(d,\ell)}-d-1}{k_0,\ k_1,\ \ldots, \ k_{\ell-1}}\, 
 w_1^{k_1} \cdots  w_{\ell-1}^{k_{\ell-1}}.
\end{multline*}
For the choice of indices $k_0=2^{\gamma(d,\ell)}-2^{\gamma(d)},k_1=\cdots=k_{\ell-2}=0$, and $k_{\ell-1}=2^{\gamma(d)}-d-1$ we get a presentation
\begin{multline}
\label{eq:presentation-1}
\overline{w}_{(\ell-1)(2^{\gamma(d)}-d-1)}((d+1)\,\xi_{\RR^d,\ell})\\
=
\binom{2^{\gamma(d,\ell)}-d-1}{2^{\gamma(d,\ell)}-2^{\gamma(d)},\ 0,\ \ldots, \ 0,\ 2^{\gamma(d)}-d-1}
w_{\ell-1}^{2^{\gamma(d)}-d-1} + \mathrm{Rest},
\end{multline}
where $\mathrm{Rest}$ is a linear combination of monomials in Stiefel--Whitney classes different from the monomial $w_{\ell-1}^{2^{\gamma(d)}-d-1}$, i.e.,
\[
 \mathrm{Rest}:=
 \sum_{\substack{k_0,k_1\ldots ,k_{\ell-1}\geq 0,\, {k_{\ell-1}<2^{\gamma(d)}-d-1}  \\ %
                 k_0+k_1+\cdots +k_{\ell-1}=2^{\gamma(d,\ell)}-d-1\\
                 k_1+2k_2+\cdots +(\ell-1)k_{\ell-1}=(\ell-1)(2^{\gamma(d)}-d-1)}}
 \binom{2^{\gamma(d,\ell)}-d-1}{k_0,\ k_1,\ \ldots, \ k_{\ell-1}}\, 
 w_1^{k_1} \cdots  w_{\ell-1}^{k_{\ell-1}}.
\]

Since by Lucas' theorem \cite{lucas} from 1878 
\[
\binom{2^{\gamma(d,\ell)}-d-1}{2^{\gamma(d,\ell)}-2^{\gamma(d)},\ 0,\ \ldots, \ 0,\ 2^{\gamma(d)}-d-1}
=
\binom{2^{\gamma(d,\ell)}-d-1}{2^{\gamma(d)}-d-1}=1,
\]
and since in $\mathrm{Rest}$ only the monomials in Stiefel--Whitney classes that are different from the monomial
$w_{\ell-1}^{2^{\gamma(d)}-d-1}$ can appear,
 applying Corollary~\ref{cor:SW-monomials} we get that
\[
\overline{w}_{(\ell-1)(2^{\gamma(d)}-d-1)}((d+1)\,\xi_{\RR^d,\ell})\neq 0.
\]
This concludes the proof of this case. 

\subsubsection{The general case, $d\geq 2$ and $\ell\geq 2$}
In the final step we use the proof of Lemma~\ref{lemma:dualSW-2} combined with the result
of the previous special case~\ref{subsubsec:1}.

Let $a:=\alpha(\ell)$ and $\ell=2^{r_1}+\cdots+2^{r_a}$ where $0\leq r_1<r_2<\cdots<r_a$.
The isomorphism of vector bundles \eqref{iso_Theta_xi_is_prod} 
\[
\theta^*\xi_{\RR^d,\ell}\cong \prod_{t=1}^{a}\xi_{\RR^d,2^{r_t}}
\] 
implies that
\[
 \theta^*((d+1)\,\xi_{\RR^d,\ell})\cong \prod_{t=1}^{a}(d+1)\,\xi_{\RR^d,2^{r_t}}.
\]
Consequently,
\[
\theta^*\overline{w}\big((d+1)\,\xi_{\RR^d,\ell}\big)= \overline{w}\big(\prod_{t=1}^{a}(d+1)\,\xi_{\RR^d,2^{r_t}}\big)
=
\overline{w}((d+1)\,\xi_{\RR^d,2^{r_1}})\times\cdots\times\overline{w}((d+1)\,\xi_{\RR^d,2^{r_a}}).
\]
In order to simplify the formulas that follow we denote by:
\begin{compactitem}
\item $\sigma:=(2^{\gamma(d)}-d-1)(\ell-\alpha(\ell))$, and
\item $\sigma_i:=(2^{\gamma(d)}-d-1)(2^{r_i}-1)$ for all $1\leq i\leq a$.
\end{compactitem}
Now similarly to the proof of Lemma~\ref{lemma:dualSW-2}, using the presentation \eqref{eq:presentation-1}, we get
\begin{multline*}
 \theta^*\overline{w}_{\sigma}((d+1)\,\xi_{\RR^d,\ell}) \\
= \sum_{s_1+\cdots+s_a=\sigma} \overline{w}_{s_1}((d+1)\,\xi_{\RR^d,2^{r_1}})\times\cdots\times\overline{w}_{s_a}((d+1)\,\xi_{\RR^d,2^{r_a}}).
\end{multline*}
As in the proof of Lemma~\ref{lemma:dualSW-2}, each term in the previous sum belongs to
a different direct summand of the cohomology
\begin{multline*}
H^{\sigma}\Big(\prod_{t=1}^{a}F(\RR^d,2^{r_t})/\Sym_{2^{r_t}}\Big)\\
\cong
\bigoplus_{s_1+\cdots +s_a=\sigma}H^{s_1}(F(\RR^d,2^{r_1})/\Sym_{2^{r_1}})\otimes\cdots\otimes H^{s_a}(F(\RR^d,2^{r_a})/\Sym_{2^{r_a}}).
\end{multline*}
Thus
\begin{multline*}
\theta^*\overline{w}_{\sigma}((d+1)\,\xi_{\RR^d,\ell})\neq 0 \ \Leftrightarrow\\
 \overline{w}_{s_1}((d+1)\,\xi_{\RR^d,2^{r_1}})\times\cdots\times\overline{w}_{s_a}((d+1)\,\xi_{\RR^d,2^{r_a}})\neq 0\; 
\text{for some} \; s_1+\cdots+s_a=\sigma.
\end{multline*}
From the previous special case~\ref{subsubsec:1} we have that
\[
 \overline{w}_{\sigma_1}((d+1)\,\xi_{\RR^d,2^{r_1}})\times\cdots\times\overline{w}_{\sigma_a}((d+1)\,\xi_{\RR^d,2^{r_a}})\neq 0,
\]
and consequently $\theta^*\overline{w}_{\sigma}((d+1)\,\xi_{\RR^d,\ell})\neq 0$, implying that
\[
\overline{w}_{\sigma}((d+1)\,\xi_{\RR^d,\ell})\neq 0.
\]
This completes the proof of Theorem~\ref{theorem:Main-2.1}.

\section{$k$-regular-$\ell$-skew embeddings}
The notion of $k$-regular-$\ell$-skew embedding combines the notions of affinely
$k$-regular map and $\ell$-skew embedding.  It was introduced by Stojanovi\'c
in~\cite{stojanovic} and originally called $(k,\ell)$-regular maps.
In this section we will define $k$-regular-$\ell$-skew embeddings, derive some properties, and finally
prove the following common generalization of our Theorems~\ref{theorem:Main-1} and~\ref{theorem:Main-2}.

\begin{theorem}
\label{theorem:Main-3}
Let $\ell,d\geq 2$ be integers.
There is no $k$-regular-$\ell$-skew embedding $\RR^d\to\RR^N$ for
\[
N\leq (d-1)(k-\alpha(k))+(2^{\gamma(d)}-d-1)(\ell-\alpha(\ell))+(d+1)\ell+k-2,
\]
where $\alpha(c)$ denotes the number of ones in the dyadic expansion of $c$,
and $\gamma(d):=\lfloor\log_2d\rfloor+1$.
\end{theorem}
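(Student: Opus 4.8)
The plan is to mirror the three-layer strategy already used for Theorems~\ref{theorem:Main-1} and~\ref{theorem:Main-2}: first reduce the non-existence of a $k$-regular-$\ell$-skew embedding to the non-vanishing of a dual Stiefel--Whitney class of a suitable bundle; then identify that bundle over the appropriate configuration space; and finally compute the class by restricting to a product of the two configuration spaces whose cohomology we already control. The natural ambient space is the configuration space $F(\RR^d,k+\ell)$ of $k+\ell$ points, on which the group $\Sym_k\times\Sym_\ell$ acts by permuting the first $k$ (\emph{regular}) and the last $\ell$ (\emph{skew}) points separately. Over the quotient $F(\RR^d,k+\ell)/(\Sym_k\times\Sym_\ell)$ I would introduce a rank-$(k+(d+1)\ell)$ bundle $\eta_{k,\ell}$, the Whitney sum of a rank-$k$ regular summand (built from the permutation representation $\RR^k$ of $\Sym_k$ on the first block, with $\Sym_\ell$ acting trivially) and a rank-$(d+1)\ell$ skew summand (the tangent bundle of the last $\ell$ points together with the associated $\xi$-summand).

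Following the construction in the proof of Lemma~\ref{lemma:TK-l-1}, with the extra coordinate $a(t)=(t,1)$, $b(t)=(t,0)$, an embedding $f$ that is $k$-regular-$\ell$-skew yields a fibrewise linear monomorphism $\omega\colon\eta_{k,\ell}\to F(\RR^d,k+\ell)/(\Sym_k\times\Sym_\ell)\times\RR^{N+1}$, where the regular coordinate $\lambda_i$ contributes $\lambda_i\cdot(a\circ f(x_i))$ and each skew block contributes $\mu_j\cdot(a\circ f(y_j))+b\circ\iota\circ df_{y_j}(v_j)$; fibrewise injectivity is precisely the affine independence of the $k$ image points together with the $\ell$ tangent affine subspaces. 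Hence $\eta_{k,\ell}$ admits an inverse of rank $N+1-k-(d+1)\ell$, and therefore $\overline{w}_{N+2-k-(d+1)\ell}(\eta_{k,\ell})\neq 0$ obstructs the existence of such an embedding, exactly as in Lemma~\ref{lemma:criterion} and Lemma~\ref{lemma:TK-l-2}.

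The core computation is then to establish
\[
\overline{w}_{D}(\eta_{k,\ell})\neq 0, \qquad D:=(d-1)(k-\alpha(k))+(2^{\gamma(d)}-d-1)(\ell-\alpha(\ell)),
\]
since substituting $N+2-k-(d+1)\ell=D$ returns precisely the bound $N=(d-1)(k-\alpha(k))+(2^{\gamma(d)}-d-1)(\ell-\alpha(\ell))+(d+1)\ell+k-2$ of the theorem. As in Lemma~\ref{lemma:dualSW-2} and the general case of Theorem~\ref{theorem:Main-2.1}, I would take two disjoint open $d$-balls and the induced $(\Sym_k\times\Sym_\ell)$-equivariant map $\theta\colon F(\RR^d,k)/\Sym_k\times F(\RR^d,\ell)/\Sym_\ell\to F(\RR^d,k+\ell)/(\Sym_k\times\Sym_\ell)$ placing the two blocks into separate balls. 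This separates the summands, giving $\theta^*\eta_{k,\ell}\cong \pr_1^*\xi_{\RR^d,k}\oplus\pr_2^*\bigl((d+1)\,\xi_{\RR^d,\ell}\bigr)$, i.e. the external product $\xi_{\RR^d,k}\times(d+1)\,\xi_{\RR^d,\ell}$. The product formula for dual Stiefel--Whitney classes and the K\"unneth theorem then give
\[
\theta^*\overline{w}_{D}(\eta_{k,\ell})=\sum_{s+t=D}\overline{w}_s(\xi_{\RR^d,k})\times \overline{w}_t\bigl((d+1)\,\xi_{\RR^d,\ell}\bigr),
\]
where distinct pairs $(s,t)$ lie in distinct K\"unneth summands, so no cancellation is possible. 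Choosing $s=(d-1)(k-\alpha(k))$ and $t=(2^{\gamma(d)}-d-1)(\ell-\alpha(\ell))$, Lemma~\ref{lemma:dualSW-2} makes the first factor nonzero and Theorem~\ref{theorem:Main-2.1} the second, so that summand, and hence $\theta^*\overline{w}_{D}(\eta_{k,\ell})$, is nonzero; by naturality $\overline{w}_{D}(\eta_{k,\ell})\neq 0$.

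I expect the only genuinely new work to lie in the first two steps, namely writing down the correct combined bundle $\eta_{k,\ell}$ over $F(\RR^d,k+\ell)/(\Sym_k\times\Sym_\ell)$ and verifying that $\theta^*$ splits it cleanly as the external product of $\xi_{\RR^d,k}$ and $(d+1)\,\xi_{\RR^d,\ell}$. Once that splitting is in hand, the non-vanishing of the dual class is immediate from the two earlier theorems via K\"unneth, requiring no further Dickson-invariant or Hung-decomposition computation. The points demanding care are purely bookkeeping: tracking the extra coordinate (the $+1$ in $\RR^{N+1}$) and the exact rank $k+(d+1)\ell$, so as to land on the stated value of $N$.
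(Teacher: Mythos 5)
Your proposal is correct and follows essentially the same route as the paper: the paper's Lemma~\ref{lemma:TK-(k,l)-1} constructs exactly your fibrewise monomorphism into the trivial $\RR^{N+1}$-bundle for the bundle $p^*T(F(M,\ell)/\Sym_{\ell})\oplus\gamma_{M,k,\ell}$ (your $\eta_{k,\ell}$, with $\gamma_{M,k,\ell}$ packaging both the rank-$k$ and rank-$\ell$ permutation summands), and Lemma~\ref{lemma:TK-(k,l)-3} together with Theorem~\ref{theorem:Main-3.1} carries out precisely your pullback along the two-disjoint-balls map and the K\"unneth/product-formula argument combining Theorems~\ref{theorem:Main-1.1} and~\ref{theorem:Main-2.1}. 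The dimension bookkeeping you outline matches the paper's as well.
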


Using the notation of~\cite{stojanovic} the previous theorem can be stated as
\[
N_{k,\ell}(M)\geq N_{k,\ell}(\RR^d)
\geq (d-1)(k-\alpha(k))+(2^{\gamma(d)}-d-1)(\ell-\alpha(\ell))+(d+1)\ell+k-1,
\]
where $M$ denotes any smooth $d$-manifold.
It is an exercise to verify that the new bound presents a considerable 
improvement of the previously known bounds~\cite[Theorem 3.1]{stojanovic}.

\subsection{Definition and first bounds}
Here is the common generalization of ``$k$-regular maps'' and ``$\ell$-skew embeddings''.
\begin{definition}[Regular-skew embeddings]
  Let $k,\ell\geq 1$ be integers, and $M$ be a smooth $d$-dimensional manifold.  A smooth embedding
  $f\colon M\to\RR^N$ is {\bf $k$-regular-$\ell$-skew embedding} if for every
  $(x_1,\ldots,x_k,y_1,\ldots,y_{\ell})$ in $F(M,k+\ell)$ the affine subspaces
  \[
  \{f(x_1)\},\ldots,\{f(x_k)\},(\iota\circ df_{y_1})(T_{y_1}M),\ldots ,(\iota\circ
  df_{y_{\ell}})(T_{y_{\ell}}M)
  \]
  of $\RR^N$ are affinely independent, where $\iota$ has been defined in~\eqref{iota}.
\end{definition}

For $\ell=0$, the notion of the $k$-regular-$\ell$-skew embedding coincides with the notion of
affinely $(k-1)$-regular map.  On the other hand, for $k=0$ we get the notion of
$\ell$-skew embedding.

The following lower bound for $k$-regular-$\ell$-skew embeddings was obtained by
Stojanovi\'c~\cite[Theorem 3.1]{stojanovic} as a direct consequence of 
Theorem~\ref{the:BRS} due to Boltjanski{\u\i}, Ry{\v{s}}kov and {\v{S}}a{\v{s}}kin.

\begin{lemma}
  Let $k,\ell\geq 1$ be integers, and $M$ be a $d$-dimensional manifold, and $f\colon M\to\RR^N$
  be a $k$-regular-$\ell$-skew embedding.  Then $\lfloor\tfrac{k}{2}\rfloor
  d+\lfloor\tfrac{k-1}{2}\rfloor+(d+1)\ell\leq N$.
\end{lemma}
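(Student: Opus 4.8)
The plan is to adapt the proof of Theorem~\ref{the:BRS} to the mixed regular--skew setting, using homogenization to turn affine independence into linear independence. First I would replace $f\colon M\to\RR^N$ by the map $F\colon M\to\RR\times\RR^N=\RR^{N+1}$, $F(x)=(1,f(x))$, whose differential is $dF_y(v)=(0,df_y(v))$. The $k$-regular-$\ell$-skew hypothesis then says exactly that for every $(x_1,\ldots,x_k,y_1,\ldots,y_{\ell})\in F(M,k+\ell)$ the vectors $F(x_1),\ldots,F(x_k)$ together with the $(d+1)$-dimensional subspaces $\RR\,F(y_j)\oplus dF_{y_j}(T_{y_j}M)$ form a direct sum in $\RR^{N+1}$; equivalently, any linear relation among the $F(x_i)$, the $F(y_j)$ and the tangent vectors $dF_{y_j}(v)$ is trivial. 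Since linear independence is inherited by subsets, the same conclusion holds for any collection of at most $k$ distinct regular points and the $\ell$ skew points (completing to a full tuple if fewer points occur).

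Next I would set $m=\lfloor k/2\rfloor$ and build a single injective map realizing the claimed dimension. Choose pairwise disjoint embedded open $d$-balls $D_1,\ldots,D_m\subseteq M$, one additional point $x_0\in M$ used only when $k$ is odd, and $\ell$ \emph{fixed} pairwise distinct points $y_1,\ldots,y_{\ell}\in M$, all arranged so that the $k+\ell$ relevant points are always distinct. I would then consider, in the manner of Theorem~\ref{the:BRS},
\[
g(\underline{x},\underline{\lambda};\lambda_0;\underline{\mu},\underline{v})
=\sum_{i=1}^m\lambda_i F(x_i)+(k-2m)\,\lambda_0 F(x_0)+\sum_{j=1}^\ell\bigl(\mu_j F(y_j)+dF_{y_j}(v_j)\bigr)\in\RR^{N+1},
\]
with $x_i\in D_i$, $\lambda_i\in\RR\setminus\{0\}$, $\lambda_0\in\RR$ (the factor $k-2m\in\{0,1\}$ being $1$ precisely when $k$ is odd), $\mu_j\in\RR$ and $v_j\in T_{y_j}M$. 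The domain is an open subset of a Euclidean space of dimension $md+m+(k-2m)+(d+1)\ell=\lfloor k/2\rfloor d+\lfloor (k-1)/2\rfloor+1+(d+1)\ell$.

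The core step is injectivity, argued as in Theorem~\ref{the:BRS}. Setting $g(\cdots)=g(\cdots')$ yields a linear relation in $\RR^{N+1}$ among the $F(z)$ for $z$ in the regular set $\{x_1,\ldots,x_m,x_1',\ldots,x_m',x_0\}$, the vectors $F(y_j)$, and the differences $dF_{y_j}(v_j)-dF_{y_j}(v_j')$. Because the $D_i$ are disjoint and $x_0$ and the $y_j$ are fixed (hence shared between the two inputs), there are at most $2m+(k-2m)=k$ distinct regular points, all distinct from the $\ell$ skew points, so the regular--skew hypothesis forces every coefficient to vanish. Exactly as in Theorem~\ref{the:BRS}, nonvanishing of $\lambda_i$ then gives $x_i=x_i'$ and $\lambda_i=\lambda_i'$; the shared point $x_0$ gives $\lambda_0=\lambda_0'$; the coefficient of $F(y_j)$ gives $\mu_j=\mu_j'$; and injectivity of $df_{y_j}$ (as $f$ is an embedding) upgrades $dF_{y_j}(v_j)=dF_{y_j}(v_j')$ to $v_j=v_j'$. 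Thus $g$ is injective, so by invariance of domain its domain dimension is at most $N+1$, which rearranges to $\lfloor k/2\rfloor d+\lfloor (k-1)/2\rfloor+(d+1)\ell\leq N$.

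Finally I would highlight the bookkeeping that produces the floor functions, which I expect to be the only real subtlety. Mapping into $\RR^{N+1}$ rather than $\RR^N$ is exactly what yields the decisive ``$-1$'', and the single extra scalar $\lambda_0$ attached to the fixed point $x_0$ is what separates odd $k$ (one extra regular point permitted, since $2m=k-1$) from even $k$ (no slack, $2m=k$). The point to be careful about is that the skew positions $y_j$ are held \emph{fixed} across the two compared inputs: this keeps the number of distinct skew points equal to $\ell$ rather than $2\ell$, so the full summand $(d+1)\ell$ survives undoubled while the regular part is halved in the usual way of Theorem~\ref{the:BRS}. Everything else is the dimension count.
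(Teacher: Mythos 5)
Your proof is correct. Note that the paper does not actually prove this lemma: it only cites Stojanovi\'c and remarks that the bound is a direct consequence of Theorem~\ref{the:BRS}, so there is no in-text argument to compare against; your self-contained adaptation of the proof of Theorem~\ref{the:BRS} is exactly the intended route, carried out in full. The two delicate points are handled properly: homogenizing via $F(x)=(1,f(x))$ correctly translates affine independence of the points $\{f(x_i)\}$ and the tangent planes $(\iota\circ df_{y_j})(T_{y_j}M)$ into the statement that every linear relation among the vectors $F(x_i)$, $F(y_j)$, $dF_{y_j}(v_j)$ in $\RR^{N+1}$ is trivial; and holding the skew points $y_j$ and the auxiliary point $x_0$ fixed across the two compared inputs keeps the total number of distinct points at $k+\ell$, which is what preserves the full summand $(d+1)\ell$ and, for odd $k$, yields $\lfloor(k-1)/2\rfloor$ rather than the $\lfloor k/2\rfloor-1$ one would get from the cruder reduction (project away the span of the homogenized tangent planes and quote Theorem~\ref{the:BRS} for a $2\lfloor k/2\rfloor$-regular map, as in the paper's proof of the improved bound that follows). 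The only implicit assumption is that $M$ admits $k+\ell$ pairwise distinct points so that the completion of a deficient tuple is possible, but this is needed for the lemma to have content in the first place.
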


Combining the proof of the previous lemma and our Theorem~\ref{theorem:Main-1} we get the
following new and improved bound.

\begin{lemma}
  Let $k,\ell\geq 1$ be integers, $M$ be a $d$-dimensional manifold, and $f \colon M\to\RR^N$ be
  a $k$-regular-$\ell$-skew embedding.  Then $d(k-\alpha(k))+\alpha(k)+(d+1)-1\leq N$.
\end{lemma}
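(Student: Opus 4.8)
The plan is to strip off the skew part by fixing $\ell$ of the points and passing to a quotient, thereby reducing to an honest $k$-regular map to which Theorem~\ref{theorem:Main-1} applies. Since the restriction of a $k$-regular-$\ell$-skew embedding to any open ball is again one, and an open ball in a $d$-manifold is diffeomorphic to $\RR^d$, it suffices to treat $M=\RR^d$.

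First I would linearise the affine-independence condition in the spirit of Lemma~\ref{lemma:equivalence}: set $g\colon\RR^d\to\RR\times\RR^N=\RR^{N+1}$, $g(x)=(1,f(x))$. A point $f(x)$ lifts to the line $\langle g(x)\rangle$, while the affine tangent plane $(\iota\circ df_{y})(T_{y}\RR^d)$ lifts to the $(d+1)$-dimensional subspace $\widetilde A_{y}:=\langle g(y)\rangle\oplus(\{0\}\times df_{y}(T_{y}\RR^d))$. The defining property of a $k$-regular-$\ell$-skew embedding then says precisely that for every $(x_1,\dots,x_k,y_1,\dots,y_\ell)\in F(\RR^d,k+\ell)$ the vectors $g(x_1),\dots,g(x_k)$ together with bases of $\widetilde A_{y_1},\dots,\widetilde A_{y_\ell}$ --- in total $k+(d+1)\ell$ vectors --- are linearly independent in $\RR^{N+1}$.

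Next I would fix $\ell$ pairwise distinct points $q_1,\dots,q_\ell\in\RR^d$ and set $U:=\widetilde A_{q_1}\oplus\cdots\oplus\widetilde A_{q_\ell}$, a subspace of dimension exactly $(d+1)\ell$ (its spanning vectors are independent by the condition above, using any $k$ further distinct points). Let $\pi\colon\RR^{N+1}\to\RR^{N+1}/U$ be the projection. The key claim is that $\pi\circ g$ is $k$-regular on $\RR^d{\setminus}\{q_1,\dots,q_\ell\}$: if $\sum_i c_i\,\pi(g(x_i))=0$ for distinct $x_i$ avoiding the $q_j$, then $\sum_i c_i g(x_i)\in U$, which contradicts the independence above for the $k+\ell$ distinct points $x_1,\dots,x_k,q_1,\dots,q_\ell$ unless all $c_i=0$. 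Restricting $\pi\circ g$ to an open ball $B\cong\RR^d$ disjoint from the $q_j$ yields a genuine $k$-regular map $\RR^d\to\RR^{N+1}/U\cong\RR^{\,N+1-(d+1)\ell}$, whence Theorem~\ref{theorem:Main-1} gives $N+1-(d+1)\ell\ge d(k-\alpha(k))+\alpha(k)$, i.e.\ the asserted bound.

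I expect the main obstacle to be the key claim: one must set up the lift $g$ so that \emph{affine} independence of points and tangent planes turns into honest \emph{linear} independence, and then verify that quotienting out the fixed skew directions $U$ preserves $k$-regularity. This is exactly the place where the sharp Theorem~\ref{theorem:Main-1} replaces the weaker Boltjanski{\u\i}--Ry{\v{s}}kov--{\v{S}}a{\v{s}}kin estimate (Theorem~\ref{the:BRS}) used for the preceding lemma, thereby producing the improved bound.
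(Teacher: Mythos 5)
Your proposal is correct and follows essentially the same route as the paper: fix $\ell$ points, lift to $\RR^{N+1}$ via $x\mapsto(1,f(x))$, quotient by the $(d+1)\ell$-dimensional linear span of the lifted tangent planes, and apply Theorem~\ref{theorem:Main-1} to the resulting $k$-regular map. If anything you are slightly more careful than the paper, which applies Theorem~\ref{theorem:Main-1} directly to the map on $M\setminus\{y_1,\dots,y_\ell\}$, whereas you explicitly restrict to an open ball diffeomorphic to $\RR^d$ so that the theorem literally applies.
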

\begin{proof}
  Consider $(y_1,\ldots,y_{\ell})\in F(M,\ell)$. Let $U$ be the affine span of
  $(\iota\circ  df_{y_1})(T_{y_1}M)\cup\cdots\cup(\iota\circ df_{y_{\ell}})(T_{y_{\ell}}M)$.
  Denote, as before, by $a\colon\RR^N\to\RR^{N+1}$ the embedding sending
  $(t_1,\ldots,t_N)$ to $(t_1,\ldots,t_N,1)$.  The linear span of the affine subspace $a(U)$
  in $\RR^{N+1}$ will be denoted by $V$.

  Then the map \[g\colon
  M{\setminus}\{y_1,\ldots,y_{\ell}\}\to\RR^{N+1}/V\approx\RR^{N+1-(d+1)\ell}\] induced by
  the restriction of $a\circ f$ and the projection $\RR^{N+1}\to\RR^{N+1}/V$ is a
  $k$-regular map. Thus by Theorem~\ref{theorem:Main-1} we get that 
  $N+1-(d+1)\ell>d(k-\alpha(k))+\alpha(k)-1$, that is,
  \[
    d(k-\alpha(k))+\alpha(k)+(d+1)\ell-1\leq N.
  \]
\end{proof}

\subsection{A topological criterion}
As in the previous two sections, we derive a topological criterion for the
existence of a $k$-regular-$\ell$-skew embedding.  Again, $M$ denotes a smooth $d$-dimensional manifold
and $TM$ the tangent bundle over $M$.  Let us also assume that $k,\ell\geq 1$, since the
cases when $k=0$ or $\ell=0$ are already discussed.

In this section we consider two configuration spaces $F(M,k+\ell)$ and $F(M,\ell)$ with
actions of, respectively, $\Sym_k\times\Sym_{\ell}$ and $\Sym_{\ell}$ on them.  Let
\[
p\colon F(M,k+\ell)/(\Sym_k\times\Sym_{\ell})\to F(M,\ell)/\Sym_{\ell}
\] 
be the projection given by forgetting the first $k$ elements in $ F(M,k+\ell)$.

The group $\Sym_k\times\Sym_{\ell}$ can naturally be seen as a subgroup of the symmetric
group $\Sym_{k+\ell}$ that acts on $\RR^{k+\ell}$ by permuting coordinates.  The inclusion
$\Sym_k\times\Sym_{\ell}\subset\Sym_{k+\ell}$ induces an action of
$\Sym_k\times\Sym_{\ell}$ on $\RR^{k+\ell}$.  A new vector bundle we consider is $\gamma_{M,k,\ell}$ given by
\[
 \RR^{k+\ell}\longrightarrow F(M,k+\ell)\times_{(\Sym_k\times\Sym_{\ell})}\RR^{k+\ell}
\longrightarrow F(M,k+\ell)/{(\Sym_k\times\Sym_{\ell})}.
\]
If we denote by
\[
r \colon F(M,k+\ell)/(\Sym_k\times\Sym_{\ell})\to F(M,k+\ell)/\Sym_{k+\ell}
\]
the quotient map, then it is not hard to see that
\[
r^*\xi_{M,k+\ell}\cong\gamma_{M,k,\ell}.
\]

As before, the first step in obtaining our topological criterion is the existence of a
fiberwise linear monomorphism.

\begin{lemma}
  \label{lemma:TK-(k,l)-1}
  If there is a $k$-regular-$\ell$-skew embedding $M\to\RR^N$, then the
  $(k+(d+1)\ell)$-\allowbreak dimensional vector bundle
  $p^*T(F(M,\ell)/\Sym_{\ell})\oplus\gamma_{M,k,\ell}$ over
  $F(M,k+\ell)/(\Sym_k\times\Sym_{\ell})$ admits an $(N+1-k-(d+1)\ell)$-dimensional
  inverse.
\end{lemma}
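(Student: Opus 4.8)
The plan is to adapt, almost verbatim, the construction from the proof of Lemma~\ref{lemma:TK-l-1} to the combined regular-skew setting: I would build a fiberwise linear monomorphism from the $(k+(d+1)\ell)$-dimensional bundle $p^*T(F(M,\ell)/\Sym_{\ell})\oplus\gamma_{M,k,\ell}$ into a trivial $(N+1)$-dimensional bundle, and then take the cokernel as the desired inverse. First I would reuse the two embeddings $a,b\colon\RR^N\to\RR^{N+1}$ given by $a(t_1,\ldots,t_N)=(t_1,\ldots,t_N,1)$ and $b(t_1,\ldots,t_N)=(t_1,\ldots,t_N,0)$, and fix a $k$-regular-$\ell$-skew embedding $f\colon M\to\RR^N$.

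Over a point $(\underline{x},\underline{y})=(x_1,\ldots,x_k,y_1,\ldots,y_{\ell})\in F(M,k+\ell)$, a vector in the fiber of $p^*T(F(M,\ell)/\Sym_{\ell})\oplus\gamma_{M,k,\ell}$ is a triple $(\underline{v},\underline{\mu},\underline{\lambda})$: here $\underline{v}\in\bigoplus_{j=1}^{\ell}T_{y_j}M$ is a tangent vector of the configuration space along the $\ell$ skew directions, while $\underline{\mu}\in\RR^k$ and $\underline{\lambda}\in\RR^{\ell}$ are the first $k$ and last $\ell$ coordinates of the fiber $\RR^{k+\ell}$ of $\gamma_{M,k,\ell}$. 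I would then define the morphism $\omega$, covering the identity on $F(M,k+\ell)/(\Sym_k\times\Sym_{\ell})$, fiberwise by
\[
\omega(\underline{v},\underline{\mu},\underline{\lambda})=\sum_{i=1}^{k}\mu_i\cdot (a\circ f)(x_i)+\sum_{j=1}^{\ell}\Big(\lambda_j\cdot (a\circ f)(y_j)+(b\circ\iota\circ df_{y_j})(v_j)\Big)\in\RR^{N+1}.
\]
The $k$ scalars $\mu_i$ play the role of the regular points and the $\ell$ blocks $(\lambda_j,v_j)$ play the role of the skew tangent spaces, exactly interpolating between Theorem~\ref{the:BRS} and Lemma~\ref{lemma:TK-l-1}.

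The essential point, and the only place the hypothesis enters, is that $\omega$ is a linear monomorphism on each fiber. If $\omega(\underline{v},\underline{\mu},\underline{\lambda})=0$, then separating off the last (that is, $(N+1)$st) coordinate turns this into the statement that an affine combination of the points $f(x_1),\ldots,f(x_k)$ and of the affine tangent spaces $(\iota\circ df_{y_1})(T_{y_1}M),\ldots,(\iota\circ df_{y_{\ell}})(T_{y_{\ell}}M)$ vanishes; this is precisely the linear reformulation, via the one-dimension-up lift $a$, of an affine dependence among the subspaces $\{f(x_1)\},\ldots,\{f(x_k)\},(\iota\circ df_{y_1})(T_{y_1}M),\ldots,(\iota\circ df_{y_{\ell}})(T_{y_{\ell}}M)$. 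Since $f$ is a $k$-regular-$\ell$-skew embedding these subspaces are affinely independent for every $(\underline{x},\underline{y})\in F(M,k+\ell)$, which forces $\underline{\mu}=0$, $\underline{\lambda}=0$, and then, because each $df_{y_j}$ is injective, $\underline{v}=0$. I would also verify that $\omega$ is invariant under the simultaneous action of $\Sym_k$ on the pairs $(x_i,\mu_i)$ and of $\Sym_{\ell}$ on the triples $(y_j,\lambda_j,v_j)$, so that it genuinely descends to a morphism over the quotient $F(M,k+\ell)/(\Sym_k\times\Sym_{\ell})$.

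Finally, once $\omega$ is fiberwise injective into the trivial bundle $F(M,k+\ell)/(\Sym_k\times\Sym_{\ell})\times\RR^{N+1}$, its cokernel is again a vector bundle by~\cite[Corollary~8.3, page~36]{husemoller}, and by construction it is an inverse of $p^*T(F(M,\ell)/\Sym_{\ell})\oplus\gamma_{M,k,\ell}$; a dimension count gives $(N+1)-(k+(d+1)\ell)=N+1-k-(d+1)\ell$, as claimed. I expect the main obstacle to be purely organizational rather than conceptual: correctly identifying the fiber of $p^*T(F(M,\ell)/\Sym_{\ell})\oplus\gamma_{M,k,\ell}$ and splitting the $\RR^{k+\ell}$-coordinates of $\gamma_{M,k,\ell}$ into the $k$ regular scalars $\underline{\mu}$ and the $\ell$ skew scalars $\underline{\lambda}$, and then checking the $(\Sym_k\times\Sym_{\ell})$-equivariance. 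The monomorphism property itself is then an immediate consequence of affine independence, in exactly the same manner as in the proof of Lemma~\ref{lemma:TK-l-1}.
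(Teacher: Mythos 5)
Your proposal is correct and follows essentially the same route as the paper: the same lift via $a,b\colon\RR^N\to\RR^{N+1}$, the same fiberwise formula for a bundle monomorphism of $p^*T(F(M,\ell)/\Sym_{\ell})\oplus\gamma_{M,k,\ell}$ into the trivial $(N+1)$-dimensional bundle, and the same conclusion via the cokernel. The only differences are cosmetic: you split the $\RR^{k+\ell}$-fiber of $\gamma_{M,k,\ell}$ into $\underline{\mu}\in\RR^k$ and $\underline{\lambda}\in\RR^{\ell}$ where the paper keeps a single $\underline{\lambda}\in\RR^{k+\ell}$, and you spell out the fiberwise-injectivity and equivariance checks that the paper leaves to the reader.
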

\begin{proof}
  As in the proof of Lemma~\ref{lemma:TK-l-1} let $a\colon\RR^N\to\RR^{N+1}$ and
  $b\colon\RR^N\to\RR^{N+1}$ denote the embeddings
\[
 a(t_1,\ldots,t_N)=(t_1,\ldots,t_N,1)\quad \text{and} \quad  b(t_1,\ldots,t_N)=(t_1,\ldots,t_N,0).
\]

Let $f\colon M\to\RR^N$ be a $k$-regular-$\ell$-skew embedding.
Next we will construct a morphism of vector bundles
\[
\alpha \colon p^*T(F(M,\ell)/\Sym_{\ell})\oplus\gamma_{M,k,\ell}\longrightarrow F(M,k+\ell)/(\Sym_k\times\Sym_{\ell})\times\RR^{N+1}
\]
that is a monomorphism on the fibers.
Consider $\underline{z} \in F(M,k+\ell)$.  Let $\underline{x} \in F(M;k)$ and
$\underline{y} \in F(M,\ell)$ respectively be the elements obtained from $\underline{z}$
by ignoring the last $\ell$ coordinates and the first $k$ coordinates respectively.  Let
$q_{k+\ell} \colon F(M,k+\ell) \to F(M,k+\ell)/(\Sym_k\times\Sym_{\ell})$ and 
$q_{\ell} \colon F(M,\ell) \to F(M,\ell)/\Sym_{\ell}$ be the projections. An element $(v,w)$ in the fiber
of $p^*T(F(M,\ell)/\Sym_{\ell})\oplus\gamma_{M,k,\ell}$ over $q_{k,\ell}(\underline{z})$ is
given by elements $v \in T_{q_{\ell}(\underline{y})}(F(M,\ell)/\Sym_{\ell})$ and $w \in
(\gamma_{M,k,\ell})_{q_{k,\ell}(\underline{z})}$.  The vector $v$ is specified by a sequence of
vectors $v_i \in T_{y_i}M$ for $i = 1, \ldots, \ell$ and $w$ by an element
$\underline{\lambda} \in \RR^{k + \ell}$. Consider the element in $\RR^{N+1}$ given by
\begin{multline*}
\lambda_1(a\circ f(x_1)) + \cdots + \lambda_k(a\circ f(x_k))+
\lambda_{k+1}(a\circ f(y_1)) + \cdots + \lambda_{k+\ell}(a\circ f(y_{\ell}))
\\
+
b\circ df_{y_1}(v_1) + \cdots + b\circ df_{y_{\ell}}(v_{\ell}).
\end{multline*}
It defines an element in the fiber over $q_{k,\ell}(\underline{z})$
of the  trivial vector bundle bundle 
$F(M,k+\ell)/(\Sym_k\times\Sym_{\ell})\times\RR^{N+1} \to F(M,k+\ell)/(\Sym_k\times\Sym_{\ell})$  
which we declare to be the image of
$(v,w)$ under $\alpha$. One easily checks that $\alpha$ is a well-defined
morphism of vector bundles which is injective on each fiber.
Thus the vector bundle $p^*T(F(M,\ell)/\Sym_{\ell})\oplus\gamma_{M,k,\ell}$ admits the required inverse.
\end{proof}

A direct consequence of Lemma~\ref{lemma:TK-(k,l)-1} is the following criterion for the non-existence of a
$k$-regular-$\ell$-skew embedding expressed in terms of a dual Stiefel--Whitney class of the
vector bundle $p^*T(F(M,\ell)/\Sym_{\ell})\oplus\gamma_{M,k,\ell}$ over $F(\RR^d,k+\ell)/(\Sym_k\times\Sym_{\ell})$.

\begin{lemma}
  \label{lemma:TK-(k,l)-2}
  Let $M$ be a smooth $d$-dimensional manifold.  If the dual Stiefel--Whitney
  class \[\overline{w}_{r+1}(p^*T(F(M,\ell)/\Sym_{\ell})\oplus\gamma_{M,k,\ell})\] does not
  vanish, then there is no $k$-regular-$\ell$-skew embedding $M\to\RR^{r+k+\ell(d+1)-1}$.
\end{lemma}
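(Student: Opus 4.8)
The plan is to argue by contraposition, following closely the passage from Lemma~\ref{lemma:TK-l-1} to Lemma~\ref{lemma:TK-l-2} (which in turn mirrors how Lemma~\ref{lemma:criterion}~\eqref{lemma:criterion:STW} is deduced from its inverse statement). Abbreviate $E:=p^*T(F(M,\ell)/\Sym_{\ell})\oplus\gamma_{M,k,\ell}$; by Lemma~\ref{lemma:TK-(k,l)-1} this bundle has rank $k+(d+1)\ell$. Suppose, contrary to the assertion, that a $k$-regular-$\ell$-skew embedding $M\to\RR^{r+\ell(d+1)-1}$ exists.

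First I would feed this embedding into Lemma~\ref{lemma:TK-(k,l)-1} with $N=r+\ell(d+1)-1$. That lemma then yields an inverse $\eta$ of $E$, that is, a vector bundle with $E\oplus\eta$ trivial, of dimension
\[
\dim\eta \;=\; N+1-k-(d+1)\ell \;=\; \bigl(r+\ell(d+1)-1\bigr)+1-k-(d+1)\ell \;=\; r-k.
\]

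Next I would invoke the standard relationship between dual Stiefel--Whitney classes and stable inverses. Since $E\oplus\eta$ is trivial, the Whitney product formula gives $w(E)\,w(\eta)=1$ in $H^*\bigl(F(M,k+\ell)/(\Sym_k\times\Sym_{\ell})\bigr)$; as $\overline{w}(E)$ is by definition the multiplicative inverse of the unit $w(E)$, uniqueness of inverses forces $\overline{w}(E)=w(\eta)$. Because $\eta$ has rank $r-k$, its Stiefel--Whitney classes vanish in degrees above its rank, whence $\overline{w}_i(E)=w_i(\eta)=0$ for every $i>r-k$.

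Finally, since $k\geq 1$ we have $r+1>r-k$, so the previous vanishing forces $\overline{w}_{r+1}(E)=0$, contradicting the hypothesis $\overline{w}_{r+1}(E)\neq 0$. Hence no such embedding exists. I expect no genuine difficulty here: the statement is a formal corollary of Lemma~\ref{lemma:TK-(k,l)-1} together with two elementary facts, namely that the dual Stiefel--Whitney class is realized by a stable inverse and that the Stiefel--Whitney classes of a finite-rank bundle vanish above its rank. The only step meriting a moment's care is the bookkeeping that identifies the inverse dimension as $r-k$ and verifies $r+1>r-k$, which holds for all $k\geq 1$.
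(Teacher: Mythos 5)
Your argument is correct and is exactly what the paper intends: the paper states this lemma as a ``direct consequence'' of Lemma~\ref{lemma:TK-(k,l)-1} without writing out a proof, and your contrapositive --- feeding the hypothetical embedding into Lemma~\ref{lemma:TK-(k,l)-1} to get an inverse of dimension $N+1-k-(d+1)\ell=r-k$, then using $\overline{w}(E)=w(\eta)$ and the vanishing of $w_i(\eta)$ above the rank $r-k<r+1$ --- is precisely that deduction. (Your bookkeeping in fact rules out embeddings into $\RR^{r+k+\ell(d+1)-1}$, i.e.\ the stated bound carries slack $k$, which is consistent with the extra $+k$ appearing in Theorem~\ref{theorem:Main-3}.)
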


\subsection{Proof of Theorem~\ref{theorem:Main-3}}

For the rest of this section assume that $M=\RR^d$.
Consider two embeddings $e_1, e_2 \colon \RR^d \to \RR^d$ whose images are disjoint. 
They induce in the obvious way a map
\[q\colon  F(\RR^d,k)/\Sym_k\times F(\RR^d,\ell)/\Sym_{\ell}\to F(\RR^d,k+\ell)/(\Sym_k\times\Sym_{\ell}).\]
One easily checks
\begin{equation}
\label{eq:001}
 q^*\gamma_{\RR^d,k,\ell}\cong\xi_{\RR^d,k}\times\xi_{\RR^d,\ell}.
\end{equation}

\begin{lemma}
\label{lemma:TK-(k,l)-3}
Let $\ell,k,d\geq 2$ be integers.  There exist isomorphisms of vector bundles over 
$F(\RR^d,k)/\Sym_k\times F(\RR^d,\ell)/\Sym_{\ell}$
\begin{eqnarray*}
 q^*\Big(p^*T(F(\RR^d,\ell)/\Sym_{\ell})\oplus\gamma_{\RR^d,k,\ell})\Big)
& \cong &
 \xi_{\RR^d,k}\times (T(F(\RR^d,\ell)/\Sym_{\ell})\oplus\xi_{\RR^d,\ell})
\\
& \cong &
\xi_{\RR^d,k}\times((d+1)\,\xi_{\RR^d,\ell}).
\end{eqnarray*}
\end{lemma}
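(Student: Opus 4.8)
The plan is to expand $q^*$ over the Whitney sum, dispose of the $\gamma$-summand using \eqref{eq:001}, and then identify the pulled-back tangent-bundle summand by analysing the composition $p\circ q$. Since pullback commutes with direct sums,
\[
q^*\bigl(p^*T(F(\RR^d,\ell)/\Sym_{\ell})\oplus\gamma_{\RR^d,k,\ell}\bigr)
\cong (p\circ q)^*T(F(\RR^d,\ell)/\Sym_{\ell})\oplus q^*\gamma_{\RR^d,k,\ell}.
\]
By \eqref{eq:001} the second summand is $q^*\gamma_{\RR^d,k,\ell}\cong\xi_{\RR^d,k}\times\xi_{\RR^d,\ell}=\pr_1^*\xi_{\RR^d,k}\oplus\pr_2^*\xi_{\RR^d,\ell}$, so the whole problem reduces to computing $(p\circ q)^*T(F(\RR^d,\ell)/\Sym_{\ell})$.

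Next I would unravel $p\circ q$. As $q$ is induced by $(\underline{x},\underline{y})\mapsto e_1(\underline{x})\times e_2(\underline{y})$ and $p$ forgets the first $k$ points, the composite sends $(\underline{x},\underline{y})$ to $e_2(\underline{y})$. Writing $\pr_2$ for the projection onto the second factor and $\bar{e}_2$ for the self-map of $F(\RR^d,\ell)/\Sym_{\ell}$ induced by $e_2$, this says $p\circ q=\bar{e}_2\circ\pr_2$, whence
\[
(p\circ q)^*T(F(\RR^d,\ell)/\Sym_{\ell})\cong\pr_2^*\bigl(\bar{e}_2^*T(F(\RR^d,\ell)/\Sym_{\ell})\bigr).
\]
The key point, which I expect to be the only genuinely nontrivial step, is that $\bar{e}_2$ is a smooth open self-embedding: the smooth embedding $e_2\colon\RR^d\to\RR^d$ is, by invariance of domain, a diffeomorphism onto an open set, so the induced $\Sym_{\ell}$-equivariant self-embedding of $F(\RR^d,\ell)$ descends to a local diffeomorphism $\bar{e}_2$. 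Its differential is therefore a fiberwise isomorphism $d\bar{e}_2\colon T(F(\RR^d,\ell)/\Sym_{\ell})\xrightarrow{\ \cong\ }\bar{e}_2^*T(F(\RR^d,\ell)/\Sym_{\ell})$, giving $\bar{e}_2^*T(F(\RR^d,\ell)/\Sym_{\ell})\cong T(F(\RR^d,\ell)/\Sym_{\ell})$. Using the differential rather than a homotopy $\bar{e}_2\simeq\id$ avoids any orientation considerations for $e_2$.

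Combining these, $(p\circ q)^*T(F(\RR^d,\ell)/\Sym_{\ell})\cong\pr_2^*T(F(\RR^d,\ell)/\Sym_{\ell})$, and reassembling via distributivity of the external product,
\[
\pr_1^*\xi_{\RR^d,k}\oplus\pr_2^*\xi_{\RR^d,\ell}\oplus\pr_2^*T(F(\RR^d,\ell)/\Sym_{\ell})
\cong\xi_{\RR^d,k}\times\bigl(T(F(\RR^d,\ell)/\Sym_{\ell})\oplus\xi_{\RR^d,\ell}\bigr),
\]
which is the first claimed isomorphism. The second isomorphism is then immediate from \eqref{eq:TMofConfSpace}, namely $T(F(\RR^d,\ell)/\Sym_{\ell})\cong d\,\xi_{\RR^d,\ell}$, so that
\[
T(F(\RR^d,\ell)/\Sym_{\ell})\oplus\xi_{\RR^d,\ell}\cong d\,\xi_{\RR^d,\ell}\oplus\xi_{\RR^d,\ell}=(d+1)\,\xi_{\RR^d,\ell},
\]
and the external factor $\xi_{\RR^d,k}$ is preserved throughout, completing the proof.
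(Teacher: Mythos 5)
Your proof is correct and takes essentially the same route as the paper, whose own proof is a one-liner citing \eqref{eq:001} for the first isomorphism and \eqref{eq:TMofConfSpace} for the second. You merely supply the one non-formal detail the paper leaves implicit, namely that $(p\circ q)^*T(F(\RR^d,\ell)/\Sym_{\ell})\cong\pr_2^*T(F(\RR^d,\ell)/\Sym_{\ell})$ because $p\circ q=\bar e_2\circ \pr_2$ and the open embedding $e_2$ induces a local diffeomorphism $\bar e_2$ of $F(\RR^d,\ell)/\Sym_{\ell}$.
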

\begin{proof}
  The first isomorphism follows from~\eqref{eq:001}, while the second isomorphism is a
  direct consequence of the isomorphism~\eqref{eq:TMofConfSpace}.
\end{proof}

\begin{theorem}
\label{theorem:Main-3.1}
Let $\ell,k,d\geq 2$ be integers.
The dual Stiefel--Whitney class
\[
\overline{w}_{(d-1)(k-\alpha(k))+(2^{\gamma(d)}-d-1)(\ell-\alpha(\ell))}(\xi_{\RR^d,k}\times((d+1)\,\xi_{\RR^d,\ell}))
\]
does not vanish.
\end{theorem}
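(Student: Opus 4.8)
The plan is to mimic the final step in the proof of Lemma~\ref{lemma:dualSW-2}: I would exploit the fact that the bundle in question is an external product over the product space $F(\RR^d,k)/\Sym_k\times F(\RR^d,\ell)/\Sym_{\ell}$, so that its total dual Stiefel--Whitney class factorizes as a cross product. Concretely, the product formula~\cite[Problem~4-A, page~54]{milnor-stasheff} (applied to dual classes exactly as in Lemma~\ref{lemma:dualSW-2}) gives
\[
\overline{w}\bigl(\xi_{\RR^d,k}\times((d+1)\,\xi_{\RR^d,\ell})\bigr)
=\overline{w}(\xi_{\RR^d,k})\times\overline{w}\bigl((d+1)\,\xi_{\RR^d,\ell}\bigr).
\]
Taking the component in degree $n:=(d-1)(k-\alpha(k))+(2^{\gamma(d)}-d-1)(\ell-\alpha(\ell))$ and applying the K\"unneth formula with $\FF_2$ coefficients, I would write this component as the sum over $s_1+s_2=n$ of the external products $\overline{w}_{s_1}(\xi_{\RR^d,k})\times\overline{w}_{s_2}((d+1)\,\xi_{\RR^d,\ell})$, each of which lies in the distinct K\"unneth summand $H^{s_1}(F(\RR^d,k)/\Sym_k)\otimes H^{s_2}(F(\RR^d,\ell)/\Sym_{\ell})$.

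Next I would single out the term with $s_1=(d-1)(k-\alpha(k))$ and $s_2=(2^{\gamma(d)}-d-1)(\ell-\alpha(\ell))$, whose sum is exactly $n$. By Lemma~\ref{lemma:dualSW-2} the first factor $\overline{w}_{s_1}(\xi_{\RR^d,k})$ is non-zero, and by Theorem~\ref{theorem:Main-2.1} the second factor $\overline{w}_{s_2}((d+1)\,\xi_{\RR^d,\ell})$ is non-zero. Since we work over the field $\FF_2$, the cross product of two non-zero cohomology classes is non-zero in the corresponding tensor factor. As the summands of the K\"unneth decomposition are independent, this single non-vanishing term cannot be cancelled by the others, and hence the whole degree-$n$ component is non-zero, which is the claim.

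The only point requiring care---and it is precisely the analogue of the independence argument already invoked in Lemma~\ref{lemma:dualSW-2}---is the K\"unneth splitting: one must verify that the chosen term, sitting in $H^{s_1}\otimes H^{s_2}$, receives no contribution from the remaining terms of the sum, which live in different bidegrees. This is immediate over a field, but it is the step where the argument would break if one were careless about the coefficient field or about matching the two degrees $s_1,s_2$ to the two non-vanishing results. Everything else reduces to a direct citation of the two established theorems together with bookkeeping of the indices, so I expect no serious obstacle.
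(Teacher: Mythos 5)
Your proposal is correct and follows essentially the same route as the paper: the external product formula for dual Stiefel--Whitney classes, the K\"unneth decomposition over $\FF_2$, and the isolation of the bidegree $\bigl((d-1)(k-\alpha(k)),\,(2^{\gamma(d)}-d-1)(\ell-\alpha(\ell))\bigr)$ term, whose factors are non-zero by Theorem~\ref{theorem:Main-1.1} (equivalently Lemma~\ref{lemma:dualSW-2}) and Theorem~\ref{theorem:Main-2.1}. No gaps.
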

\begin{proof}
In order to simplify the notation let us denote $r:=(d-1)(k-\alpha(k))$ and $s:=(2^{\gamma(d)}-d-1)(\ell-\alpha(\ell))$.

First, let us recall what has been proved so far:
\begin{itemize}
\item $\overline{w}_{r}(\xi_{\RR^d,k})\neq 0$, Theorem~\ref{theorem:Main-1.1}, and
\item $\overline{w}_{s}((d+1)\,\xi_{\RR^d,\ell})\neq 0$, Theorem~\ref{theorem:Main-2.1}.
\end{itemize}

Now we apply the product formula~\cite[Problem~4-A, page~54]{milnor-stasheff}:
\[
 \overline{w}\big(\xi_{\RR^d,k}\times((d+1)\,\xi_{\RR^d,\ell})\big)= \overline{w}(\xi_{\RR^d,k})\times\overline{w}((d+1)\,\xi_{\RR^d,\ell}),
\]
where $\times$ on the right hand side denotes the cross product in the cohomology.
Hence we get
\begin{multline*}
\overline{w}_{r+s}\big(\xi_{\RR^d,k}\times((d+1)\,\xi_{\RR^d,\ell})\big)
\\
 =\overline{w}_{r}(\xi_{\RR^d,k})\times \overline{w}_{s}((d+1)\,\xi_{\RR^d,\ell})
+
\sum_{i+j=r+s,i\neq r,j\neq s}
\overline{w}_{i}(\xi_{\RR^d,k})\times \overline{w}_{j}((d+1)\,\xi_{\RR^d,\ell}).
\end{multline*}
Since $\overline{w}_{r}(\xi_{\RR^d,k})\times \overline{w}_{s}((d+1)\,\xi_{\RR^d,\ell})\neq 0$ it follows by the K\"unneth formula that
\[
 \overline{w}_{r+s}\big(\xi_{\RR^d,k}\times((d+1)\,\xi_{\RR^d,\ell})\big)\neq 0.\vspace{-6mm}
\]
\end{proof}

Now Theorem~\ref{theorem:Main-3}  follows from Lemma~\ref{lemma:TK-(k,l)-2}, Lemma~\ref{lemma:TK-(k,l)-3} 
and Theorem~\ref{theorem:Main-3.1}.


\providecommand{\noopsort}[1]{}


\end{document}